\tikzset{
	vertex/.style={circle, draw, minimum size=1.5em},
	edge/.style={->, > = latex'}
}
\tikzset{
	dot/.style = {circle, draw = black, fill, minimum size=#1,
		inner sep=0pt, outer sep=0pt},
	dot/.default = 3.8pt
}
\definecolor{CornflowerBlue}{rgb}{0.39, 0.58, 0.93}
\definecolor{LavenderMagenta}{rgb}{0.93, 0.51, 0.93}
\definecolor{PastelOrange}{rgb}{1.0, 0.7, 0.28}
\newcommand{\x}[1]{\vecev{#1}}
\def\N{\mathbb N}
\def\R{\mathcal R}
\def\cN{\mathcal{N}}
\newcommand{\Down}[1]{\lceil #1 \rceil}
\newtheorem{theorem}{Theorem}[section]
\newtheorem{lemma}[theorem]{Lemma}
\newtheorem{proposition}[theorem]{Proposition}
\newtheorem{claim}{Claim}[theorem]
\theoremstyle{definition}
\newtheorem*{theorem*}{Theorem}
\crefname{enumi}{}{}
\crefname{enumii}{}{}
\crefname{definition}{definition}{definitions}
\crefname{section}{section}{sections}
\crefname{subsection}{Subsection}{subsections}
\crefname{lemma}{lemma}{lemmata}
\crefname{remark}{remark}{remarks}
\crefname{theorem}{theorem}{theorems}
\crefname{corollary}{corollary}{corollaries}
\crefname{figure}{figure}{figures}
\crefname{proposition}{proposition}{propositions}
\crefname{observation}{observation}{observations}
\def\lqedsymbol{\ifmmode$\lrcorner$\else{\unskip\nobreak\hfil
		\penalty50\hskip1em\null\nobreak\hfil$\rule{1.2ex}{1.2ex}$
		\parfillskip=0pt\finalhyphendemerits=0\endgraf}\fi}
\newenvironment{claimproof}[1][\proofname]
{%
	\proof[#1]%
}
{%
	\endproof%
}
\title{Halin's grid theorem for digraphs}
\keywords{Halin's grid theorem, infinite digraph, infinite grid, end}
\author{Florian Reich}
\address{Universit{\"a}t Hamburg, Department of Mathematics, Bundesstra{\ss}e~55 (Geomatikum), 20146~Hamburg, Germany}
\email{florian.reich@uni-hamburg.de}
\begin{document}
\begin{abstract}
	Halin showed that every thick end of every graph contains an infinite grid.
	We extend Halin's theorem to digraphs.
	More precisely, we show that for every infinite family $\mathcal{R}$ of disjoint equivalent out-rays there is a grid whose vertical rays are contained in $\mathcal{R}$.
	Furthermore, we obtain similar results for in-rays and necklaces.
\end{abstract}
	
\maketitle

\section{Introduction}
Two rays of a graph $G$ are \emph{equivalent} if there exist infinitely many disjoint paths between them in $G$.
The \emph{ends} of $G$ are the equivalence classes of rays in $G$ and we call an end \emph{thick} if it contains infinitely many disjoint rays.
Halin~\cite{halin1965maximalzahl} proved that for every thick end $\omega$ in $G$ there exists a subdivided hexagonal half-grid in $G$ witnessing that $\omega$ is thick.
Kurkofka, Melcher and Pitz~\cite{kurkofka2022strengthening} strengthened Halin's seminal result in the following way:

\begin{theorem}\label{thm:undirected}
	For every infinite family $\mathcal{R}$ of disjoint equivalent rays in a graph $G$, there exists a subdivision of a hexagonal half-grid $G'$ such that each vertical ray of $G'$ is an element of $\mathcal{R}$.
\end{theorem}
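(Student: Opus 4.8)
The plan is to realise $G'$ inside $G$ by a recursive construction that sweeps through the hexagonal half-grid along its diagonals, using an infinite subfamily of $\mathcal{R}$ as the set of vertical rays. Fix an exhaustion $H_1 \subseteq H_2 \subseteq \cdots$ of the hexagonal half-grid by finite connected subgraphs, where $H_{n+1}$ arises from $H_n$ by lengthening some columns a little, starting at most one new column, and adding finitely many new rungs (the subdivided horizontal edges). I build $G'$ so that the $i$-th column is realised by an entire ray $C_i \in \mathcal{R}$, with the columns chosen one at a time during the recursion. After stage $n$ I will have committed the realisation of $H_n$: the rays $C_0, \dots, C_k$ chosen so far, each used in full, together with finitely many completed rungs joining consecutive ones. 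The key structural feature is that only finitely many vertices outside $C_0 \cup \dots \cup C_k$ have been used, so there is always room to continue.

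The engine of the construction is a standard fan lemma: if $R$ and $S$ are disjoint equivalent rays and $F$ is any finite set of vertices, then there are infinitely many pairwise disjoint $R$--$S$ paths, each internally disjoint from $R \cup S$ and from $F$, whose endpoints appear in the same order along $R$ as along $S$ --- a non-crossing ladder. (Take infinitely many disjoint $R$--$S$ paths by equivalence; discard the finitely many meeting $F$; trim each to its last vertex on $R$ and its first subsequent vertex on $S$; finally extract a non-crossing subfamily, using that only finitely many vertices of a ray precede a given one.) At each stage this lemma supplies one more rung between two consecutive chosen rays, placed above everything committed and routed around the finite committed skeleton. The point is that such a rung only has to avoid the \emph{other} chosen rays and the earlier rungs, whereas rays of $\mathcal{R}$ that are never chosen as columns --- like all other vertices of $G$ --- may be traversed freely. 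When $H_{n+1}$ calls for a fresh column I pick an as-yet-unused ray of $\mathcal{R}$ (all but finitely many avoid the finite committed skeleton), subject to the care described next, and connect it to its neighbour by such a ladder.

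The heart of the matter is choosing the sequence of vertical rays so that the recursion never stalls. I would carry the invariant that after choosing $C_0, \dots, C_k$ there is an infinite subfamily $\mathcal{R}_k \subseteq \mathcal{R} \setminus \{C_0, \dots, C_k\}$ such that $\{C_k\} \cup \mathcal{R}_k$ is pairwise equivalent in $G - (C_0 \cup \dots \cup C_{k-1})$ and, moreover, $C_k$ is not a \emph{bottleneck} there: $G - (C_0 \cup \dots \cup C_k)$ still has an end containing infinitely many members of $\mathcal{R}_k$. Given this, one picks $C_{k+1}$ inside that surviving end and among the non-bottleneck rays, whereupon the ladders built by the fan lemma live in $G - (C_0 \cup \dots \cup C_{k-1})$ and so automatically avoid $C_0, \dots, C_{k-1}$, exactly as the hexagonal pattern requires. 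The main obstacle --- and the step I expect to be genuinely delicate --- is showing that a non-bottleneck ray always remains available, i.e. that one cannot have \emph{every} ray of an infinite family of disjoint equivalent rays be such that deleting it scatters the remaining rays among ends each capturing only finitely many of them. This calls for a Menger-type analysis of how a thick end can fragment when a single ray is removed; the example in which one ``central'' ray carries all the connectivity between the others shows that bottleneck rays genuinely occur and must be deliberately kept out of the column set.

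A possible alternative route is to first apply Halin's original theorem~\cite{halin1965maximalzahl} to obtain some subdivided hexagonal half-grid $H_0 \subseteq G$ as a scaffold, attach each ray of $\mathcal{R}$ to $H_0$ by a fan of infinitely many disjoint paths, and then re-thread the rays of $\mathcal{R}$ along $H_0$ into a new half-grid; this trades the bottleneck issue for the task of disentangling the various fans, with comparable bookkeeping. Either way, the union $G' = \bigcup_n (\text{realisation of } H_n)$ over all stages is a subdivision of the hexagonal half-grid each of whose vertical rays belongs to $\mathcal{R}$, which is the assertion.
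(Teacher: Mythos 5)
First, a point of reference: the paper does not prove \cref{thm:undirected} at all --- it is quoted from Kurkofka, Melcher and Pitz~\cite{kurkofka2022strengthening} --- but the machinery the paper builds for its directed analogue (\cref{thm:main}) is the directed version of their argument, so that is the natural benchmark. Against it, your proposal has a genuine gap at exactly the step you flag as ``delicate'': you reduce the theorem to the claim that one can always select a next column that is not a \emph{bottleneck} (and, more strongly, maintain an infinite subfamily that stays pairwise equivalent after deleting the columns chosen so far), but you do not prove this claim, and it is not a technicality --- it is essentially the whole content of the theorem. Your fan lemma is fine (the Ramsey/untangling step works because only finitely many vertices of a ray precede a given one), but a greedy ``avoid bottlenecks'' strategy does not obviously terminate in a usable choice: whether two rays of $\mathcal{R}$ admit infinitely many connecting paths that avoid the other chosen columns depends on the global pattern of which pairs are joined \emph{directly} (by infinitely many paths internally disjoint from all of $\bigcup\mathcal{R}$) versus only \emph{through} intermediate rays, and your invariant gives no handle on that pattern.

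What the actual proof does instead --- and what your sketch is missing --- is to make this pattern into an object one can analyse: fix a transversal ray $S$ meeting every member of $\mathcal{R}$ infinitely often (your \cref{prop:existence_out_ray}-style construction), contract each ray of $\mathcal{R}$ to a vertex to obtain an auxiliary multigraph, and pass to the subgraph of edges of \emph{infinite} multiplicity. A star--comb/ray dichotomy applied to this auxiliary graph (the undirected analogue of \cref{cor:infinite_star_comb} and \cref{lem:out-degree}) then yields either a ray of the auxiliary graph --- in which case consecutive columns in that order are directly joined by infinitely many disjoint paths and your recursion goes through verbatim --- or an infinite star, which is precisely your ``one central ray carries all the connectivity'' configuration; it is handled not by excluding the centre from consideration but by using it as a highway along which the rungs between the other (chosen) columns are routed, with disjointness obtained by always climbing further up the central ray. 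Your closing remark shows you see this case exists, but identifying it requires the global auxiliary-graph analysis first; it cannot be recovered from local non-bottleneck choices. Your alternative route (Halin scaffold plus re-threading) has the same status: the re-threading is the hard part and is essentially what \cite{kurkofka2022strengthening} is about. So the architecture is right and several ingredients are correctly assembled, but the central combinatorial lemma is asserted rather than proved.
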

\noindent
For certain classes of graphs thick ends are even witnessed by subdivided full hexagonal grids,
which has been studied by
Heuer~\cite{heuer2017excluding}, and by Hamann and Georgakopoulos~\cite{georgakopoulos2024full}.

In this paper we investigate grids in digraphs that are necessary and sufficient for thick ends of digraphs.
There exist two common notions of directed ends.
The first notion is due to Zuther~\cite{zuther1998ends} and defines \emph{ends} of digraphs as equivalence classes consisting of in- and out-rays.
An \emph{in-ray} is a digraph whose underlying undirected graph is a ray and whose edges are oriented towards the unique vertex of degree $1$.
Similarly, \emph{out-rays} are defined.
Given an in- or out-ray $R_1$ and an in- or out-ray $R_2$, we say $R_1$ and $R_2$ are \emph{equivalent} if there exist infinitely many disjoint directed $R_1$--$R_2$~paths and infinitely many disjoint directed $R_2$--$R_1$~paths in $D$~\cite{zuther1998ends}.
An end is \emph{thick} if it contains infinitely many disjoint out-rays.

We determine two unavoidable grid-like structures in these thick ends:
On the one hand, the \emph{bidirected quarter-grid}, where two consecutive out-rays are connected in both directions (see~\cref{fig:bidirected_quarter_grid}).
On the other hand, the \emph{cyclic quarter-grids}, where two consecutive out-rays are connected in one direction and in the other direction the out-rays are connected to the first out-ray (see~\Cref{fig:dominated_directed_quarter_grid_in,fig:dominated_directed_quarter_grid_out}).
In all these grid the mentioned out-rays, called \emph{vertical} out-rays, are disjoint and equivalent, and thus witness that the corresponding end is thick.
Moreover, we will show that all rays contained in such a grid are in the same (thick) end.
Thick ends for families of in-rays and the two grid-like structures for in-rays, called \emph{reversed bidirected quarter-grid} and \emph{reversed cyclic quarter-grid}, are defined similarly.

Our main result extends~\cref{thm:undirected} to Zuther ends:

	\begin{figure}
		\centering
		\begin{subfigure}[b]{0.3\textwidth}
		\centering
		\begin{tikzpicture}

				
				\foreach \y/\z in {0/1,1/1.5,1.5/2.5,2.5/4,4/5,5/7.5,7.5/8.5,8.5/12,12/13}{
					\filldraw ({0*0.8},{0.5*\y}) circle (1pt);
					\draw[edge] ({0*0.8},{0.5*\y+0.05}) to ({0*0.8},{0.5*(\z)-0.05});
				}
				
				\foreach \y/\z in {0/1,1/1.5,1.5/2.5,2.5/3,3/3.5,3.5/4,4/5,5/5.5,5.5/7,7/7.5,7.5/8.5,8.5/9,9/11.5,11.5/12,12/13}{
					\filldraw ({1*0.8},{0.5*\y}) circle (1pt);
					\draw[edge] ({1*0.8},{0.5*\y+0.05}) to ({1*0.8},{0.5*(\z)-0.05});
				}
				
				\foreach \y/\z in {2/3,3/3.5,3.5/5.5,5.5/6,6/6.5,6.5/7,7/9,9/9.5,9.5/11,11/11.5,11.5/13}{
					\filldraw ({2*0.8},{0.5*\y}) circle (1pt);
					\draw[edge] ({2*0.8},{0.5*\y+0.05}) to ({2*0.8},{0.5*(\z)-0.05});
				}
				
				\foreach \y/\z in {5/6,6/6.5,6.5/9.5,9.5/10,10/10.5,10.5/11,11/13}{
					\filldraw ({3*0.8},{0.5*\y}) circle (1pt);
					\draw[edge] ({3*0.8},{0.5*\y+0.05}) to ({3*0.8},{0.5*(\z)-0.05});
				}
				
				\foreach \y/\z in {9/10,10/10.5,10.5/13}{
					\filldraw ({4*0.8},{0.5*\y}) circle (1pt);
					\draw[edge] ({4*0.8},{0.5*\y+0.05}) to ({4*0.8},{0.5*(\z)-0.05});
				}
				
				
				\foreach \x/\y in {0/1,0/2.5,1/3,0/5,1/5.5,2/6,0/8.5,1/9,2/9.5,3/10}{
					\draw[edge] ({\x*0.8+0.05},{0.5*\y}) to ({\x*0.8+0.95*0.8}, {0.5*\y});
				}
				
				\foreach \x/\y in {1/1.5,2/3.5,1/4,3/6.5,2/7,1/7.5,4/10.5,3/11,2/11.5,1/12}{
					\draw[edge] ({\x*0.8-0.05},{0.5*\y}) to ({\x*0.8-0.95*0.8}, {0.5*\y});
				}
				
				
				\foreach \x in {0,1,2,3,4}{
					\filldraw[gray] ({\x*0.8},6.6) circle (0.5pt);
					\filldraw[gray] ({\x*0.8},6.7) circle (0.5pt);
					\filldraw[gray] ({\x*0.8},6.8) circle (0.5pt);
				}
				
				
				\filldraw[gray] ({4.7*0.8},5.5) circle (0.5pt);
				\filldraw[gray] ({4.9*0.8},5.5) circle (0.5pt);
				\filldraw[gray] ({5.1*0.8},5.5) circle (0.5pt);

				\draw (0,-0.5) node {$R_1$};
				\draw (0.8,-0.5) node {$R_2$};
				\draw (1.6,0.5) node {$R_3$};
				\draw (2.4,2) node {$R_4$};
				\draw (3.2,4) node {$R_5$};

		\end{tikzpicture}
		\caption{The bidirected quarter-grid.}
		\label{fig:bidirected_quarter_grid}
		\end{subfigure}
		\begin{subfigure}[b]{0.31\textwidth}
		\centering
		\begin{tikzpicture}
			
			\foreach \y/\z in {0/1,1/2,2/3,3/4.5,4.5/5.5,5.5/7.5,7.5/8.5,8.5/11,11/13}{
				\filldraw ({0*0.8},{0.5*\y}) circle (1pt);
				\draw[edge] ({0*0.8},{0.5*\y+0.05}) to ({0*0.8},{0.5*(\z)-0.05});
			}
			
			\foreach \y/\z in {0/1,1/2,2/4,4/4.5,4.5/7,7/7.5,7.5/10.5,10.5/11,11/13}{
				\filldraw ({1*0.8},{0.5*\y}) circle (1pt);
				\draw[edge] ({1*0.8},{0.5*\y+0.05}) to ({1*0.8},{0.5*(\z)-0.05});
			}
			
			\foreach \y/\z in {2/3,3/4,4/6.5,6.5/7,7/10,10/10.5,10.5/13}{
				\filldraw ({2*0.8},{0.5*\y}) circle (1pt);
				\draw[edge] ({2*0.8},{0.5*\y+0.05}) to ({2*0.8},{0.5*(\z)-0.05});
			}
			
			\foreach \y/\z in {4.5/5.5,5.5/6.5,6.5/9.5,9.5/10,10/13}{
				\filldraw ({3*0.8},{0.5*\y}) circle (1pt);
				\draw[edge] ({3*0.8},{0.5*\y+0.05}) to ({3*0.8},{0.5*(\z)-0.05});
			}
			
			\foreach \y/\z in {7.5/8.5,8.5/9.5,9.5/13}{
				\filldraw ({4*0.8},{0.5*\y}) circle (1pt);
				\draw[edge] ({4*0.8},{0.5*\y+0.05}) to ({4*0.8},{0.5*(\z)-0.05});
			}
			
			
			\foreach \x/\y in {0/2,0/4.5,1/4,0/7.5,1/7,2/6.5,0/11,1/10.5,2/10,3/9.5}{
				\draw[edge] ({\x*0.8+0.95*0.8}, 0.5*\y) to ({\x*0.8+0.05},0.5*\y);
			}
			
			\foreach \x/\y in {1/1,2/3,3/5.5,4/8.5}{
				\draw[white, bend left=20, line width=0.2cm] ({0.05*0.8}, 0.5*\y) to ({\x*0.8-0.05},0.5*\y);
				\draw[edge, bend left=20, shorten <=1pt, shorten >=1pt] ({0*0.8}, 0.5*\y) to ({\x*0.8},0.5*\y);
			}
			
			
			\foreach \x in {0,1,2,3,4}{
				\filldraw[gray] ({\x*0.8},6.6) circle (0.5pt);
				\filldraw[gray] ({\x*0.8},6.7) circle (0.5pt);
				\filldraw[gray] ({\x*0.8},6.8) circle (0.5pt);
			}
			
			
			\filldraw[gray] ({4.7*0.8},5.5) circle (0.5pt);
			\filldraw[gray] ({4.9*0.8},5.5) circle (0.5pt);
			\filldraw[gray] ({5.1*0.8},5.5) circle (0.5pt);
			
			\draw (0,-0.5) node {$R_1$};
			\draw (0.8,-0.5) node {$R_2$};
			\draw (1.6,0.5) node {$R_3$};
			\draw (2.4,1.75) node {$R_4$};
			\draw (3.2,3.25) node {$R_5$};
			
		\end{tikzpicture}
		\caption{The descending cyclic quarter-grid.}
		\label{fig:dominated_directed_quarter_grid_in}
	\end{subfigure}
	\begin{subfigure}[b]{0.31\textwidth}
	\centering
	\begin{tikzpicture}

\foreach \y/\z in {0/1,1/1.5,1.5/3,3/4,4/5.5,5.5/7,7/8.5,8.5/10.5,10.5/13}{
	\filldraw ({8*0.8},{0.5*\y}) circle (1pt);
	\draw[edge] ({8*0.8},{0.5*\y+0.05}) to ({8*0.8},{0.5*(\z)-0.05});
}

\foreach \y/\z in {0/1,1/1.5,1.5/3,3/3.5,3.5/5.5,5.5/6,6/8.5,8.5/9,9/13}{
	\filldraw ({9*0.8},{0.5*\y}) circle (1pt);
	\draw[edge] ({9*0.8},{0.5*\y+0.05}) to ({9*0.8},{0.5*(\z)-0.05});
}

\foreach \y/\z in {2.5/3.5,3.5/4,4/6,6/6.5,6.5/9,9/9.5,9.5/13}{
	\filldraw ({10*0.8},{0.5*\y}) circle (1pt);
	\draw[edge] ({10*0.8},{0.5*\y+0.05}) to ({10*0.8},{0.5*(\z)-0.05});
}

\foreach \y/\z in {5.5/6.5,6.5/7,7/9.5,9.5/10,10/13}{
	\filldraw ({11*0.8},{0.5*\y}) circle (1pt);
	\draw[edge] ({11*0.8},{0.5*\y+0.05}) to ({11*0.8},{0.5*(\z)-0.05});
}

\foreach \y/\z in {9/10,10/10.5,10.5/13}{
	\filldraw ({12*0.8},{0.5*\y}) circle (1pt);
	\draw[edge] ({12*0.8},{0.5*\y+0.05}) to ({12*0.8},{0.5*(\z)-0.05});
}


\foreach \x/\y in {8/1,8/3,9/3.5,8/5.5,9/6,10/6.5,8/8.5,9/9,10/9.5,11/10}{
	\draw[edge] ({\x*0.8+0.05}, 0.5*\y) to ({\x*0.8+0.95*0.8}, 0.5*\y);
}

\foreach \x/\y in {9/1.5,10/4,11/7,12/10.5}{
	\draw[white, bend right=20, line width=0.2cm, shorten <=5pt, shorten >=5pt] ({\x*0.8},0.5*\y) to ({8*0.8}, 0.5*\y);
	\draw[edge, bend right=20, shorten <=1pt, shorten >=1pt] ({\x*0.8},0.5*\y) to ({8*0.8}, 0.5*\y);
}


			\foreach \x in {8,9,10,11,12}{
	\filldraw[gray] ({\x*0.8},6.6) circle (0.5pt);
	\filldraw[gray] ({\x*0.8},6.7) circle (0.5pt);
	\filldraw[gray] ({\x*0.8},6.8) circle (0.5pt);
}


\filldraw[gray] ({12.7*0.8},5.5) circle (0.5pt);
\filldraw[gray] ({12.9*0.8},5.5) circle (0.5pt);
\filldraw[gray] ({13.1*0.8},5.5) circle (0.5pt);

				\draw (6.4,-0.5) node {$R_1$};
				\draw (7.2,-0.5) node {$R_2$};
				\draw (8,0.75) node {$R_3$};
				\draw (8.8,2.25) node {$R_4$};
				\draw (9.6,4) node {$R_5$};
		
	\end{tikzpicture}
	\caption{The ascending cyclic quarter-grid.}
	\label{fig:dominated_directed_quarter_grid_out}
\end{subfigure}
	\caption{The necessary and sufficient grids for thick Zuther ends. The vertical rays of the respective grids are $R_1, R_2, \dots$.}
	\end{figure}

\begin{restatable}{theorem}{thmMain}\label{thm:main}
	For every infinite family $(R_i)_{i \in I}$ of disjoint equivalent out-rays (in-rays) in a digraph $D$ there exists a subdivision $D'$ of either a (reversed) bidirected quarter-grid or a (reversed) cyclic quarter-grid in $D$ such that each vertical out-ray (in-ray) of $D'$ is an element of $(R_i)_{i \in I}$.
\end{restatable}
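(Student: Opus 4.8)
The plan is to reduce the in-ray case to the out-ray case and then to build the target grid by a diagonal construction whose rungs come from the directed equivalence of the rays. Reversing every arc of $D$ turns a family of disjoint equivalent in-rays into a family of disjoint equivalent out-rays and carries each reversed target grid onto its unreversed counterpart, so it suffices to treat out-rays; pass to a countable subfamily and enumerate it as $R_0,R_1,R_2,\dots$. Two elementary tools will be used throughout. First, \emph{cleaning}: any $R_a$--$R_b$ directed path may be replaced by the segment from its last vertex on $R_a$ to the first subsequent vertex on $R_b$, which is internally disjoint from $R_a\cup R_b$; this preserves pairwise disjointness of a family. Second, \emph{monotone fans}: every infinite family of pairwise disjoint $R_a\to R_b$ paths has an infinite subfamily whose initial vertices on $R_a$ and terminal vertices on $R_b$ each form strictly increasing sequences (pass first to a subfamily cofinal on $R_a$, then apply the infinite Erd\H{o}s--Szekeres principle on $R_b$, using that a ray contains no infinite strictly decreasing sequence). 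Note also that a finite vertex set meets only finitely many members of a disjoint family of paths, so deleting finitely many vertices never destroys such a family, and equivalence of the rays persists upon passing to tails.

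I would then build the ``forward'' part of the grid as an increasing union of finite subdivided partial grids, diagonalising over pairs (gap $i$, row $m$) so that in the limit each consecutive gap $R_i$--$R_{i+1}$ carries infinitely many pairwise disjoint forward rungs $R_i\to R_{i+1}$ that are cleaned, monotone, and internally disjoint from \emph{every} column $R_\ell$. At a typical step one needs one further $R_i\to R_{i+1}$ path avoiding the finite committed set and avoiding all other columns. Since only finitely many vertices are committed, there remain infinitely many pairwise disjoint cleaned $R_i\to R_{i+1}$ candidates; the point is whether one of them also avoids the rays $R_\ell$ with $\ell\neq i,i+1$. Here the infinite pigeonhole principle yields the basic dichotomy: either such a clean candidate always exists, or some single column $R_j$ meets infinitely many of the disjoint candidates, and truncating those at $R_j$ produces infinitely many pairwise disjoint $R_i\to R_j$ paths.

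In the first case the construction runs to completion, and after interleaving the rungs along each column so that they attach in the alternating order required by the grid, and after carrying out the symmetric argument for backward rungs $R_{i+1}\to R_i$, we obtain a subdivision of the bidirected quarter-grid whose vertical out-rays are exactly the $R_i$. In the second case we embrace the obstruction: discarding the columns before $R_j$ and relabelling so that $R_j=R_0$, we have a forward ladder $R_0\to R_1\to\cdots$ together with infinitely many disjoint return paths from later columns down to $R_0$; iterating the same alternative — together with cleaning, monotone fans, and the diagonal bookkeeping — upgrades this to a disjoint return path from \emph{every} column to $R_0$ and yields a subdivision of the outward-oriented dominated directed quarter-grid with vertical out-rays among the $R_i$. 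The inward-oriented dominated directed quarter-grid arises in the same way with the roles of the forward and backward rungs exchanged, the obstruction then lying in the forward direction. In every case, ``each $R_i$ is used in full'' is arranged by declaring the rung-attachment vertices on $R_i$ to be the branch vertices and the remaining vertices of $R_i$ to be subdivision vertices.

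The genuine obstacle is the last of the three requirements on the rungs. Cleaning removes intersections with the two columns a rung joins and Erd\H{o}s--Szekeres enforces the monotone ``grid'' ordering of attachment points, but neither controls intersections with the \emph{other} columns, and a single such column may meet infinitely many of the available paths across a gap — an obstruction that cannot be repaired locally. Turning this into structure is the heart of the proof: one must show that when it occurs it can be made to occur at one column that serves as a global hub for \emph{all} gaps simultaneously, rather than a different hub for each gap, which calls for a compactness or repeated-refinement argument; and one must then organise the resulting return paths, disjointly and monotonically, into the dominated directed quarter-grid. Everything else is the routine interleaving and re-coordinatising of fans.
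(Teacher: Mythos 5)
Your proposal correctly isolates the crux of the problem but does not cross it, and the dichotomy on which it rests is misstated. When none of the infinitely many disjoint cleaned $R_i$--$R_{i+1}$ candidates avoids all other columns, the infinite pigeonhole principle does \emph{not} force a single column $R_j$ to meet infinitely many of them: each candidate may meet a different column (say the $n$-th candidate meets only $R_{i+n+1}$), in which case every column is hit only finitely often, so no hub exists, yet no clean rung exists either. This third alternative is a genuine case, not a degenerate one; in the paper it is precisely the distinction between a vertex of infinite out-degree in the contracted multi-digraph $D(J)$ and an edge of infinite multiplicity, i.e.\ an edge of $D^\infty(J)$ (see \cref{lem:out-degree}), and handling it occupies all of \cref{sec:subfamilies}. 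Your outline also conflates the direction of the obstruction: infinitely many disjoint $R_i$--$R_j$ paths with $j>i+1$ are forward jumps, not return paths to a dominating column, and whether such concentrations assemble in the auxiliary digraph $D^\infty$ into an infinite strong component, an out-ray, an in-ray, or merely a different local hub for each gap leads to genuinely different constructions (\Cref{sec:component,sec:rays}).

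The second gap is the one you name yourself: promoting a local hub per gap to a single global hub and organising the return paths disjointly and monotonically. This is the heart of the proof, and ``a compactness or repeated-refinement argument'' is a placeholder, not an argument. The paper resolves it by contracting the rays along a reference out-ray $S$ to obtain $D(J)$ and $D^\infty(J)$ and then running a trichotomy on $D^\infty(J)$: an infinite strong component yields the grid via a butterfly-minor theorem for infinite strongly connected digraphs (\cref{cor:infinite_star_comb}, producing $\mathcal{D}(K_{1,\infty})$, $\mathcal{D}(S)$ or a dominated directed ray, whence the two grid types); an in-ray or out-ray of $D^\infty(J)$ yields an inward- or outward-oriented dominated directed quarter-grid by a careful recursive construction; and if neither occurs for any $J$, one extracts order-respecting path families in both directions and uses them to build a new reference out-ray along which $D^\infty$ \emph{becomes} strongly connected, feeding back into the first case. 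Note in particular that the forward and backward rung analyses cannot be run independently and then superimposed, as your last step suggests: the paper must refine the index set and the reference ray between the two, and the final output may still be a dominated rather than a bidirected quarter-grid. None of this machinery, nor a substitute for it, appears in your outline, so what you have is an accurate description of the difficulty rather than a proof.
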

\noindent
We remark that both types of quarter-grids are indeed necessary for a statement like~\cref{thm:main} since the bidirected quarter-grid does not contain a subdivision of a cyclic quarter-grid whose vertical rays are contained in $(R_i)_{i \in I}$ and vice versa.

By relaxing the constraints so that the vertical rays only have to be equivalent to $(R_i)_{i \in I}$ the cyclic quarter-grid can be omitted:

\begin{restatable}{theorem}{thmZutherEnd}\label{thm:zuther_end}
	For every infinite family $(R_i)_{i \in I}$ of disjoint equivalent out-rays (in-rays) in a digraph~$D$ there exists a subdivision of the (reversed) bidirected quarter-grid $D'$ such that each vertical out-ray (in-ray) of $D'$ is equivalent to the rays in $(R_i)_{i \in I}$.
\end{restatable}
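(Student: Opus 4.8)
The plan is to derive \cref{thm:zuther_end} from \cref{thm:main}. We treat the case of out-rays; the in-ray version is entirely analogous, obtained by reversing every edge of $D$ (which turns out-rays into in-rays, a (reversed) bidirected quarter-grid into a bidirected quarter-grid, and preserves equivalence). Apply \cref{thm:main} to $(R_i)_{i\in I}$. If the resulting subdivision is of a bidirected quarter-grid, we are already done, since each of its vertical out-rays is an element of $(R_i)_{i\in I}$ and hence, in particular, equivalent to the rays in $(R_i)_{i\in I}$. So we may assume that the resulting subdivision $D'$ is of a dominated directed quarter-grid with vertical out-rays $S_0, S_1, S_2, \dots \in (R_i)_{i\in I}$: for every $n\ge 0$ there are infinitely many subdivided edges directed from $S_n$ to $S_{n+1}$, for every $n\ge 1$ there are infinitely many subdivided edges directed from $S_n$ to $S_0$, and all these horizontal connections are arranged along the $S_n$ in the grid-like fashion depicted in \cref{fig:dominated_directed_quarter_grid_out}.

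It remains to construct, inside $D$, a subdivision of a bidirected quarter-grid all of whose vertical out-rays are equivalent to the $S_n$ --- equivalently, to the rays in $(R_i)_{i\in I}$, as all of these rays lie in a common thick Zuther end $\omega$ of $D$. This cannot be done inside $D'$: there $S_0$ is a bottleneck, because inside $D'$ every path reaching a column $S_n$ from outside $S_n$ must pass through $S_0, S_1, \dots, S_{n-1}$ in this order, and consequently $D'$ contains no subdivided bidirected quarter-grid whose vertical rays are among the $S_n$. We therefore must work in the whole digraph. Here we have at our disposal, on the one hand, the equivalence of the $S_n$ in $D$ --- between any two of them there are infinitely many disjoint paths in each direction --- and, on the other hand, the dominated structure of $D'$, which hands us for free infinitely many disjoint paths from each $S_n$ into $S_0$ and, along the forward staircase, from $S_0$ back into the family. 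The extra slack that \cref{thm:zuther_end} grants over \cref{thm:main}, namely that the vertical rays need only be equivalent to the $R_i$ rather than equal to them, is precisely what will let us absorb this bottleneck.

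I would build the bidirected quarter-grid recursively, constructing its vertical out-rays $T_0, T_1, T_2, \dots$ together with its horizontal connections in stages, maintaining at each stage that everything constructed so far is internally disjoint and appears in the correct order along the $T_k$, and that every $T_k$ is an out-ray equivalent to the $R_i$. The forward connection $T_k\to T_{k+1}$ I would take from the forward staircase of $D'$; the backward connection $T_{k+1}\to T_k$ I would extract, via a Menger-type linking argument, from the infinitely many disjoint paths guaranteed by the equivalence of the $S_n$ in $D$. Whenever such a backward path is forced to meet $S_0$ or an already-constructed column, I would reroute the offending vertical ray through the conflicting segment, so that the conflict is absorbed into a redefinition of some $T_k$ --- still a ray in $\omega$ --- rather than destroying the grid structure. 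The main obstacle is the bookkeeping: keeping the chosen connecting paths pairwise internally disjoint, disjoint from all columns, and correctly ordered along the rays, throughout the recursion and across infinitely many stages. I expect the dominated structure to be exactly what makes this close, since it supplies, at every stage and arbitrarily far out along the rays, a fresh reservoir of disjoint paths into $S_0$ that can be spliced with the staircase to produce the required backward connections. A cleaner alternative worth attempting is to iterate \cref{thm:main}: apply it to $(S_1, S_2, \dots)$, then to the non-bottleneck columns of the resulting dominated directed quarter-grid, and so on; if the bidirected case never occurs one obtains an infinite sequence of pairwise disjoint bottleneck rays, and the fact that each such ray receives infinitely many disjoint paths from, and reaches, all later ones should let these bottleneck rays themselves be assembled into a bidirected quarter-grid.
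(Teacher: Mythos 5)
Your reduction to \cref{thm:main} and the observation that only the dominated directed case needs work are exactly the paper's starting point, and your in-ray reduction via $\protect\Dv$ is also the paper's. But from there the proposal has a genuine gap, and it also rests on a false premise. You assert that the construction ``cannot be done inside $D'$'' and that one ``must work in the whole digraph''. That conflates two different statements: it is true that $D'$ contains no subdivided bidirected quarter-grid whose vertical rays are \emph{among} the $S_n$, but the relaxed conclusion of \cref{thm:zuther_end} only asks for vertical rays \emph{equivalent} to them, and the paper's entire construction takes place inside the subdivided dominated directed quarter-grid $D'$. The key idea you are missing is precisely how the dominating ray resolves the bottleneck from within: each new vertical ray $S_i$ is built by repeatedly splicing segments of the original columns with staircases and arches of the girders so that every $S_i$ passes through the dominating column $R_1$ infinitely often; whenever the missing direction of connection between two consecutive new rays is needed, one routes both rays through $R_1$ (in the inward-oriented case via a staircase down to $R_1$ and an arch back out; in the outward-oriented case via an arch into $R_1$ and a staircase back out) and then takes the required connecting path to be a segment of $R_1$ itself between the two visits. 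This makes the construction completely explicit and reduces the bookkeeping to choosing girders far enough out to avoid the finite part already built.

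By contrast, your plan to pull the backward connections $T_{k+1}\to T_k$ out of the ambient digraph $D$ via a Menger-type argument, and to ``reroute the offending vertical ray through the conflicting segment'' whenever such a path meets $S_0$ or another column, is not a proof as stated. A backward path chosen from $D$ at large may enter and leave an intermediate column $S_m$ arbitrarily; absorbing it by redefining $T_m$ can invalidate the ordering of connections already attached to $T_m$, and you give no argument that this rerouting stabilises so that each $T_k$ is a genuine increasing union of paths, nor that the rerouted rays remain pairwise disjoint. Since this absorption step is the entire difficulty of the theorem, the proposal identifies the right target but does not close it; the paper closes it by never leaving $D'$ and by exploiting the arches and staircases as the rerouting mechanism.
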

\noindent
We deduce \cref{thm:zuther_end} from~\cref{thm:main}.
Independently, Hamann and Heuer~\cite{hamann} recently proved \cref{thm:zuther_end}.\footnote{Hamann and Heuer's notion of (reversed) bidirected quarter-grid is slightly different, but their (reversed) bidirected quarter-grid is contained in a subdivision of our (reversed) bidirected quarter-grid and vice versa.}
Furthermore, Zuther showed a special case of~\cref{thm:zuther_end} for strictly increasing sequences of ends that contain out-rays \cite{zuther1998ends}*{Theorem 3.1}.

The second notion of ends in digraphs is due to B\"urger and Melcher, who define \emph{ends} of digraphs as equivalence classes of necklaces~\cite{burger2020ends}.
A \emph{necklace} is a strongly connected subgraph $N$ for which there exists a family $(H_n)_{n \in \N}$ of finite strongly connected subgraphs such that $N = \bigcup_{n \in \N} H_n$ and $H_i \cap H_j \neq \emptyset$ holds if and only if $|i - j| \leq 1$ for every $i, j \in \N$~\cite{bowler2024connectoids}.\footnote{B\"urger and Melcher used an equivalent definition of necklaces in~\cite{burger2020ends}.}
Two necklaces $N_1$ and $N_2$ are \emph{equivalent} if there exist infinitely many disjoint directed $N_1$--$N_2$~paths and infinitely many disjoint directed $N_2$--$N_1$~paths, and an end is \emph{thick} if it contains infinitely many disjoint necklaces~\cite{burger2020ends}.
This notion of ends reflects canonically the strong connectivity of its host digraph and in particular satisfies the direction theorem for digraphs~\cite{burger2020ends}.

The grid-like structures witnessing this kind of thick ends are similar to the bidirected and the cyclic quarter-grid:
\emph{Bidirected necklace grids} and \emph{cyclic quarter-grids} are obtained from subdivisions of the corresponding quarter-grids by replacing the vertical rays by necklaces.
We prove:

\begin{restatable}{theorem}{thmNecklace}\label{thm:necklace}
	For every infinite family $\cN$ of disjoint equivalent necklaces in a digraph $D$ there exists either a bidirected necklace grid $D'$ or a cyclic necklace grid $D'$ in $D$ such that each vertical necklace of $D'$ is an element of $\cN$.
\end{restatable}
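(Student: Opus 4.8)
\emph{Overview.} The plan is to reduce \cref{thm:necklace} to \cref{thm:main}: extract one out-ray from each necklace, apply \cref{thm:main} to these out-rays, and then fatten the vertical out-rays of the resulting quarter-grid back up to the original necklaces. For each $N\in\cN$ fix a witnessing sequence $(H^N_n)_{n\in\N}$ of finite strongly connected subgraphs with $N=\bigcup_n H^N_n$ and $H^N_i\cap H^N_j\neq\emptyset\iff|i-j|\le 1$, and choose $x^N_n\in H^N_n\cap H^N_{n+1}$. Concatenating, for every $n$, a path inside $H^N_{n+1}$ from $x^N_n$ to $x^N_{n+1}$ (which exists by strong connectivity of $H^N_{n+1}$) yields an out-ray $R^+_N\subseteq N$ through all the $x^N_n$; since each $x^N_n$ lies in only finitely many beads, $R^+_N$ leaves every finite subset of $N$. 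The out-rays $R^+_N$, $N\in\cN$, are pairwise disjoint because the necklaces are.

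\emph{The extracted out-rays are equivalent.} First I would check that $(R^+_N)_{N\in\cN}$ is a family of equivalent out-rays in $D$. Fix $N,M\in\cN$. As $N$ and $M$ are equivalent there are infinitely many pairwise disjoint $N$--$M$ paths; trimming each to its last vertex on $N$ and its first vertex on $M$ makes them internally disjoint from $V(N)\cup V(M)$. Since each bead is finite and these paths are disjoint, I may pass to an infinite subfamily along which the initial vertices lie in pairwise non-consecutive beads of $N$ and the terminal vertices in pairwise non-consecutive beads of $M$. To such a path $P$ I prepend a path inside the bead of $N$ containing its initial vertex, from a vertex of $R^+_N$ in that bead to the initial vertex of $P$, and append symmetrically a path inside the relevant bead of $M$ ending on $R^+_M$; after retrimming this gives infinitely many pairwise disjoint $R^+_N$--$R^+_M$ paths, since the non-consecutiveness of the beads forces the prepended (resp.\ appended) paths to be pairwise disjoint and disjoint from the other trimmed $N$--$M$ paths, while the $N$-side additions are disjoint from the $M$-side ones because $V(N)\cap V(M)=\emptyset$. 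The same argument with the $M$--$N$ paths gives the reverse direction, so $R^+_N$ and $R^+_M$ are equivalent.

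\emph{The obstacle, and the intended fix.} \cref{thm:main} applied to $(R^+_N)_{N\in\cN}$ now produces a subdivision of a bidirected or of a dominated directed quarter-grid whose vertical out-rays are $R^+_{N_0},R^+_{N_1},\dots$ for some infinite subfamily $(N_m)_{m\in\N}$ of $\cN$, and I want to upgrade it to the corresponding necklace grid by declaring the vertical necklace of column $m$ to be $N_m$. The one thing that has to be arranged is that the rungs of this quarter-grid --- the horizontal paths, including the long dominating ones --- do not run through the interiors of the necklaces $N_m$, since in a (dominated directed) necklace grid the rungs are internally disjoint from the necklaces and meet them only at the attachment vertices on the $R^+_{N_m}$. \textbf{This is the main obstacle.} By the quarter-grid structure every rung already meets each vertical out-ray only at its (at most two) attachment vertices, so the point is to stop a rung from entering some $N_m$ at a vertex off $R^+_{N_m}$. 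I would handle this by running \cref{thm:main} not in $D$ but in an auxiliary digraph $\hat D$ with $\bigcup_{N\in\cN}N\subseteq\hat D\subseteq D$ in which every necklace $N$ meets the rest of $\hat D$ only along $R^+_N$: then every $\hat D$-path, hence every rung of a quarter-grid lying in $\hat D$, enters and leaves each $N$ only along $R^+_N$, so the obstacle disappears. Producing such a $\hat D$ in which $(R^+_N)_{N\in\cN}$ is still an equivalent family means rerouting all inter-necklace connections through the chosen out-rays, which forces a careful and repeated use of the bead structure and --- since some necklaces may be squeezed strictly between others --- probably a prior passage to a suitable subfamily of $\cN$ together with a compatible choice of the out-rays; this is where the content of the necklace statement over \cref{thm:main} actually sits.

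\emph{Conclusion.} Granting this, \cref{thm:main} applied inside $\hat D$ yields a subdivision $D'_0\subseteq\hat D$ of a bidirected or of a dominated directed quarter-grid with vertical out-rays $R^+_{N_0},R^+_{N_1},\dots$ whose rungs, by the previous paragraph, meet $\bigcup_m N_m$ only at their attachment vertices on these out-rays. Hence $D':=D'_0\cup\bigcup_m N_m$ --- which is the subdivided quarter-grid $D'_0$ with each vertical out-ray $R^+_{N_m}$ replaced by the necklace $N_m$, the $N_m$ being pairwise disjoint and meeting the rungs only at attachment vertices --- is a bidirected necklace grid or a dominated directed necklace grid in $D$ all of whose vertical necklaces lie in $\cN$, which proves \cref{thm:necklace}.
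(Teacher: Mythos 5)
Your overall strategy -- turn each necklace into an out-ray, apply \cref{thm:main}, and then fatten the vertical rays back up to necklaces -- is the same as the paper's, and you correctly identify the central obstacle: the rungs of the resulting quarter-grid must meet each necklace only in a controlled way. But you then explicitly leave that step unproved (``this is where the content of the necklace statement over \cref{thm:main} actually sits''), so the proposal has a genuine gap exactly at the point where the work is. Moreover, the fix you gesture at -- first choosing an out-ray $R^+_N$ inside each necklace and then rerouting all inter-necklace connections onto these pre-chosen rays -- is not how the paper proceeds, and it is not clear it can be made to work: a connecting path may enter $N$ deep inside a bead far from $R^+_N$, and forcing it onto $R^+_N$ requires detours through beads that other connecting paths also need, which is precisely the disjointness problem you would have to solve. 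There is also a second, unaddressed requirement hiding in the definition of a necklace grid: the attachment points must be distributed so that each odd bead of some witness receives at most one entering or leaving edge and no even bead is entered in its interior. Merely arranging that rungs attach ``along $R^+_N$'' does not give this; several rungs could attach inside a single finite bead, and a witness cannot in general be refined to separate them.

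The paper resolves both issues at once by a different order of operations. It first constructs a countable family of disjoint connecting paths $(P_n)_{n\in\N}$ realising the equivalence of the necklaces, with the bookkeeping function $\alpha(n,N)$ forcing $P_n$ to avoid an ever-growing initial segment of beads of every necklace already hit; this guarantees the attachment points of distinct paths land in well-separated beads. It then restricts to the subgraph $\hat D=\bigcup\cN\cup\bigcup_n P_n$ and passes to a \emph{strong minor} $A$ of $\hat D$, contracting the strong components of the unused beads of each necklace to single vertices and keeping only a directed through-path in each bead indexed by $\alpha(N)$, so that each necklace literally becomes an out-ray of $A$ and disjoint paths in $A$ expand to disjoint paths in $D$. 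Applying \cref{thm:main} to $A$ and expanding back then yields the necklace grid, with the re-indexed witness $(\hat H^N_k)_{k\in\N}$ certifying the odd/even bead conditions. You would need to supply an argument of comparable precision (or this one) to close your proposal.
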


We will observe in \cref{sec:necklace_grids} that there exists a cyclic necklace grid that does not contain a bidirected necklace grid and vice versa.
Thus both types of grids are indeed necessary for~\cref{thm:necklace}, and this holds true even if we relax the constraints on the vertical necklaces as in~\cref{thm:zuther_end}.
We will prove~\cref{thm:necklace} by constructing an auxiliary graph to which we can apply~\cref{thm:main}.

In the proof of our main result, \cref{thm:main}, we analyse the connectivity between distinct out-rays of $(R_i)_{i \in I}$, for which we follow the idea of Kurkofka, Melcher and Pitz~\cite{kurkofka2022strengthening} to consider the auxiliary graph obtained by contracting the rays $(R_i)_{i \in I}$:
we fix an out-ray $S$ that intersects every element of $(R_i)_{i \in I}$ infinitely often.
For every $J \subseteq I$ let $D(J)$\footnote{The digraph $D(J)$ depends on the choice of $S$, but this choice will always be clear from the context.} be the digraph obtained from $S \cup \bigcup_{i \in J} R_i$ by firstly contracting the out-ray $R_i$ to a single vertex~$i$ for every $i \in J$, secondly suppressing vertices of in- and out-degree $1$ and thirdly removing loops.
By construction, there is a canonical one-to-one~correspondence $\sigma$ between the edges of $D(J)$ and the directed paths in $S$ that have endpoints in distinct out-rays $(R_i)_{i \in J}$ and are internally disjoint to $\bigcup_{i \in J} R_i$, i.e.\ $\sigma(uv)$ is a directed path starting in $R_u$ and ending in $R_v$ that is internally disjoint to $\bigcup_{i \in J} R_i$.

Further, we consider the auxiliary graph $D^\infty(J)$ of $D(J)$ that captures edges of infinite multiplicity, i.e\ the digraph obtained from $D(J)$ by removing edges of finite multiplicity and replacing edges of infinite multiplicity by single edges.

This paper is organised as follows.
In~\cref{sec:preliminiaries} we introduce basic notations and show that there exists an out-ray $S$ that intersects every element of $(R_i)_{i \in I}$ infinitely often.
We show in~\cref{sec:component} that~\cref{thm:main} holds true if $D^\infty(J)$ contains an infinite strong component and in~\cref{sec:rays} that~\cref{thm:main} holds true if $D^\infty(J)$ contains either an in-ray or an out-ray.
We analyse the structure of $D$ if there does not exist $J \subseteq I$ such that $D^\infty(J)$ contains an infinite strong component, an in-ray or an out-ray in~\cref{sec:subfamilies}.
In~\cref{sec:proof_main_theorem} we combine the results of~\Cref{sec:component,sec:rays,sec:subfamilies} to prove~\cref{thm:main}, and further deduce~\cref{thm:zuther_end}.
Finally, we show~\cref{thm:necklace} in~\cref{sec:necklace_grids} and argue that both types of necklace grids are indeed necessary.

\section{Preliminaries}\label{sec:preliminiaries}

For standard notations we refer to Diestel's book~\cite{diestel}.
For simplicity, we refer to directed paths as \emph{paths}.
Given sets $A,B$ of vertices of a digraph $D$, an \emph{$A$--$B$~path} is a path in $D$ that starts in $A$, ends in $B$ and is internally disjoint to $A \cup B$.
Similarly, we define \emph{$A$--$B$~paths} for subgraphs $A,B$, and \emph{$a$--$b$~paths} for vertices $a$ and $b$.
Furthermore, an \emph{$A$-path} is an $A$--$A$~path.

A digraph $D$ is \emph{strongly connected} if for every two vertices $v,w \in V(D)$ there exists a $v$--$w$~path in $D$.
The maximal strongly connected subgraphs of a digraph $D$ are called the \emph{strong components} of $D$.
Furthermore, the components of the underlying undirected graph of $D$ are called the \emph{weak components} of $D$.
Given a digraph $D$, let $\Dv$ be the digraph obtained from $D$ by reversing the orientation of each edge.
Given a graph $G$, let \emph{$\mathcal{D}(G)$} be the digraph obtained from $G$ by replacing each edge $uv \in E(G)$ by directed edges $(u,v), (v,u)$.

The unique vertex of an out- or in-ray $R$ that is incident with precisely one edge is the \emph{root} of $R$.
Given an out-ray or an in-ray $R$, we write $vR$ for the unique out-/in-ray in $R$ with root $v \in V(R)$.
For $v,w \in V(R)$ let $v \leq_R w$ if and only if $w \in V(vR)$.
Additionally, we let $vRw$ be the path in $R$ that has endpoints $v$ and $w$.
For every subset $J \subseteq I$, every set $X \subseteq V(D)$ and every family $({R}_i)_{i \in I}$ of disjoint equivalent in-rays or of disjoint equivalent out-rays in a digraph $D$, we define $\Down{X}_J:= X \cup \bigcup_{j \in J} \{v \in V(R_j): V(v R_j) \cap X \neq \emptyset \}$.
An \emph{in-arborescence} is a digraph whose underlying undirected graph is a rooted tree such that all edges are oriented towards the root. Similarly, \emph{out-arborescences} are defined.

\begin{proposition}\label{prop:existence_out_ray}
	Let $D$ be a digraph and let $(R_i)_{i \in I}$ be a countable family of disjoint equivalent out-rays (in-rays) in $D$.
	There exists an out-ray (in-ray) $S$ that intersects every element of $(R_i)_{i \in I}$ infinitely often.
\end{proposition}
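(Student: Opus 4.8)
The plan is to build $S$ as an increasing union $S=\bigcup_{k\in\N}S_k$ of finite directed paths, where each $S_{k+1}$ arises from $S_k$ by appending a directed path that starts at the last vertex of $S_k$; the equivalence of the rays is used only to ``jump'' from one ray to the next. First I would reduce the in-ray case to the out-ray case: an in-ray of $D$ is precisely an out-ray of the digraph obtained from $D$ by reversing the orientation of every edge, this reversal preserves the relation ``equivalent'' (it merely interchanges $R_i$--$R_j$~paths with $R_j$--$R_i$~paths), and whether a ray is met infinitely often depends only on vertex sets. So it suffices to treat a countable family $(R_i)_{i\in I}$ of disjoint equivalent out-rays, and we may assume $I\neq\emptyset$.

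Fix a sequence $(\tau_k)_{k\in\N}$ of elements of $I$ in which every element of $I$ occurs infinitely often (possible as $I$ is countable and nonempty), and let $S_0$ be the single root vertex of $R_{\tau_0}$. The construction maintains the invariants that, for each $k$: \textbf{(a)}~$V(S_k)\cap V(R_i)$ is finite for every $i\in I$; and \textbf{(b)}~the last vertex $x_k$ of $S_k$ lies on $R_{\tau_k}$ and is $\leq_{R_{\tau_k}}$-maximal in $V(S_k)\cap V(R_{\tau_k})$. Given $S_k$, write $a=\tau_k$ and $b=\tau_{k+1}$. If $a=b$, let $S_{k+1}$ be $S_k$ with the $R_a$-successor of $x_k$ appended; this vertex is not in $S_k$ by~(b), and both invariants are immediate. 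If $a\neq b$, I would use the equivalence of $R_a$ and $R_b$ to fix infinitely many pairwise disjoint $R_a$--$R_b$~paths, and, passing to subpaths, assume each meets $R_a$ only in its first vertex and $R_b$ only in its last vertex. Since these paths are pairwise disjoint, $V(S_k)$ is finite, and the paths have pairwise distinct first vertices on $R_a$ and pairwise distinct last vertices on $R_b$, all but finitely many of them are disjoint from $V(S_k)$, start at a vertex $s$ with $x_k<_{R_a}s$, and end at a vertex $t$ with $v<_{R_b}t$ for all $v\in V(S_k)\cap V(R_b)$. Fix such a path $Q$ from $s$ to $t$ and set $S_{k+1}:=S_k\cup x_k R_a s\cup Q\cup t R_b t'$, where $t'$ is the $R_b$-successor of $t$. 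Then $x_k\to s$ (along $R_a$) $\to t$ (along $Q$) $\to t'$ (along $R_b$) is a directed path meeting $V(S_k)$ only in $x_k$, so $S_{k+1}$ is a finite directed path extending $S_k$; invariant~(a) holds because only vertices of $R_a$, of $R_b$, and interior vertices of $Q$ (which lie on no ray) are added, and~(b) holds because $V(S_{k+1})\cap V(R_b)=(V(S_k)\cap V(R_b))\cup\{t,t'\}$ has $\leq_{R_b}$-maximum $t'=x_{k+1}$.

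Then $S:=\bigcup_k S_k$ is a directed path --- each $S_k$ is, the chain is nested with consistent orientation, and every step adds at least one vertex --- with infinitely many vertices, hence an out-ray. For each $i\in I$: whenever $\tau_{k+1}=i$ the step from $S_k$ to $S_{k+1}$ adds a vertex of $R_i$ that is new (being disjoint from $V(S_k)$), and since $\tau_{k+1}=i$ for infinitely many $k$ the ray $R_i$ meets $S$ in infinitely many vertices, as required.

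I expect the one genuine point to be the disjointness bookkeeping in the case $a\neq b$. The invariants are arranged precisely so that walking forward along $R_a$ from the current endpoint leaves the already-built path immediately (this is~(b)), and so that only a finite initial portion of the list of pairwise disjoint $R_a$--$R_b$~paths can fail to start beyond $x_k$ on $R_a$, to end beyond $V(S_k)\cap V(R_b)$ on $R_b$, or to be internally disjoint from the finite set $V(S_k)$; any path outside that finite portion serves.
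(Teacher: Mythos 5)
Your proposal is correct and follows essentially the same route as the paper: fix a sequence of indices in which every element of $I$ recurs infinitely often, and recursively extend a finite path by jumping to the next ray via a connecting path that avoids the finite structure built so far together with the relevant initial segments of the rays (which the paper packages as avoiding $\Down{V(P_n)}_I$, while you derive it by discarding finitely many of an infinite disjoint family of $R_a$--$R_b$~paths). The in-ray case is handled identically by passing to $\Dv$.
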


\begin{proof}
	We begin with the case that $(R_i)_{i \in I}$ is a family of out-rays.
	Let $(\alpha(n))_{n \in \N}$ be a sequence of elements in $I$ such that each element of $I$ occurs infinitely often.
	We construct recursively a strictly increasing sequence of nested paths $(P_n)_{n \in \N}$ starting in the same vertex such that $P_{n}$ ends in $x_n \in V(R_{\alpha(n)})$ with $x_n R_{\alpha(n)} \cap P_n = \{x_n\}$ for every $n \in \N$.
	
	Let $x_1$ be some vertex of $R_{\alpha(1)}$ and set $P_1 := \{x_1\}$.
	If $P_n$ has been constructed for some $n \in \N$, then let $Q_{n+1}$ be some $R_{\alpha(n)}$--$R_{\alpha(n+1)}$~path avoiding $\Down{V(P_n)}_I$, which exists since $R_{\alpha(n)}$ and $R_{\alpha(n+1)}$ are equivalent.
	Further, let $O_{n+1}$ be the path in $R_{\alpha(n)}$ starting in $x_n$ and ending in the start vertex of $Q_{n+1}$.
	Then the concatenation $P_{n+1}$ of $P_n, O_{n+1}$ and $Q_{n+1}$ is as desired.
	This completes the construction of $(P_n)_{n \in \N}$.
	The out-ray $\bigcup_{n \in \N} P_n$ is as desired.
	
	For a family $(R_i)_{i \in I}$ of in-rays, we obtain the desired in-ray by applying the result for out-rays to $\Dv$.
\end{proof}
For families of in-rays we define $D(J)$ and $D^\infty(J)$ in a similar way as for out-rays by considering an in-ray $S$ intersecting every in-ray in $J$ infinitely often.
Given a family $(R_i)_{i \in I}$ of disjoint equivalent in-rays or disjoint equivalent out-rays, let $\sigma$ be the canonical one-to-one~correspondence between the edges of $D(J)$ and the $(\bigcup_{i \in J} R_i)$-paths in $S$.
Note that $\sigma(e)$ and $\sigma(f)$ are internally disjoint and they intersect only if the head of $e$ is the tail of $f$ or vice versa for every $e \neq f \in E(D(J))$.
Furthermore, each edge of $D(J)$ with endpoints in $J'$ corresponds to an edge of $D(J')$ for every $J' \subseteq J$.

\begin{lemma}\label{lem:out-degree}
	Let $D$ be a digraph and let $(R_i)_{i \in I}$ be an infinite family of disjoint equivalent out-rays (in-rays) in $D$.
	For every finite subset $J \subseteq I$ there is either
	\begin{itemize}
		\item an element of $J$ with infinite out-degree (in-degree) in $D(I)$, or
		\item an edge in $D^\infty(I)$ with tail (head) in $J$ and head (tail) in $I \setminus J$.
	\end{itemize}
\end{lemma}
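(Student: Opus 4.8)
The plan is to reduce the in-ray case to the out-ray case by reversing all edges, and then to read off the two alternatives from the auxiliary out-ray $S$ underlying $D(I)$ via a double pigeonhole. For the reduction: if $(R_i)_{i\in I}$ is a family of disjoint equivalent in-rays, it becomes a family of disjoint equivalent out-rays in $\Dv$, the in-ray $S$ supplied by \cref{prop:existence_out_ray} becomes an out-ray of $\Dv$ meeting every $R_i$ infinitely often, and contracting, suppressing and deleting loops all commute with reversing every edge; hence $D(I)$ and $D^\infty(I)$ for the in-rays in $D$ are exactly the edge-reversals of the corresponding digraphs built from the out-rays in $\Dv$, so that in-degrees turn into out-degrees and heads and tails get interchanged. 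Thus it suffices to prove the statement for out-rays.

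So let $(R_i)_{i\in I}$ be a family of disjoint equivalent out-rays and let $J\subseteq I$ be finite and nonempty. Set $A:=\bigcup_{j\in J}R_j$ and $B:=\bigcup_{i\in I\setminus J}R_i$; these are disjoint and $A\cup B=\bigcup_{i\in I}R_i$. Since $J\neq\emptyset$ the out-ray $S$ meets $A$ infinitely often, and since $I$ is infinite we have $I\setminus J\neq\emptyset$, so $S$ meets $B$ infinitely often as well. Because $A$ and $B$ are disjoint, $S$ then contains infinitely many pairwise distinct subpaths, each of which starts in $A$, ends in $B$, and has no interior vertex in $A\cup B$. As $A\cup B=\bigcup_{i\in I}R_i$, each such subpath is a $\bigl(\bigcup_{i\in I}R_i\bigr)$-path in $S$ whose endpoints lie in distinct out-rays, one with index in $J$ and one with index in $I\setminus J$; hence under $\sigma$ it corresponds to an edge of $D(I)$ with tail in $J$ and head in $I\setminus J$. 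So $D(I)$ has infinitely many edges from $J$ to $I\setminus J$.

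Finally the pigeonhole. As $J$ is finite, some fixed $j^*\in J$ is the tail of infinitely many of these edges. If they have infinitely many distinct heads, then $j^*$ has infinitely many out-neighbours, hence infinite out-degree, in $D(I)$, and the first alternative holds. Otherwise some fixed $i^*\in I\setminus J$ is the head of infinitely many of them, so the multi-edge $j^*i^*$ has infinite multiplicity in $D(I)$ and therefore survives in $D^\infty(I)$ as an edge with tail $j^*\in J$ and head $i^*\in I\setminus J$, which is the second alternative. The only step that calls for a little care is the identification used above: one must check that the chosen subpaths of $S$ really are $\bigl(\bigcup_{i\in I}R_i\bigr)$-paths with endpoints in distinct rays — their interiors avoid \emph{every} ray precisely because $A\cup B$ is the union of all the $R_i$ — so that $\sigma$ genuinely delivers edges of $D(I)$ of the required orientation; everything else is a routine counting argument.
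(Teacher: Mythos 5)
Your proof is correct and takes essentially the same route as the paper's: show that $D(I)$ has infinitely many edges with tail in $J$ and head in $I\setminus J$, apply the same double pigeonhole (first over the finitely many possible tails, then over heads versus multiplicities), and deduce the in-ray case by passing to $\Dv$. The only variation is in the first step, where the paper invokes the equivalence of the rays to produce infinitely many disjoint paths from $\bigcup_{j\in J}R_j$ to $\bigcup_{i\in I\setminus J}R_i$, whereas you extract the required edges directly from the infinitely many transitions of $S$ from the former set to the latter; your variant is, if anything, tied more closely to the definition of $D(I)$, whose edges by construction arise only from subpaths of $S$.
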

\begin{proof}
	Since the rays in $(R_i)_{i \in I}$ are equivalent, there are infinitely many disjoint $(\bigcup_{i \in I}R_i)$-paths that start in $(R_j)_{j \in J}$ and end in $(R_i)_{i \in I \setminus J}$.
	Thus there are infinitely many edges of $D(I)$ starting in $J$ and ending in $I \setminus J$.
	As $J$ is finite, there exists $j \in J$ such that infinitely many edges in $D(I)$ start in $j$ and end in $I \setminus J$.
	Either there are infinitely many vertices in $I \setminus J$ in which at least one of these edges end or there exists a vertex in $I \setminus J$ in which infinitely many of these edges end.
	In the former case $j$ has infinite out-degree in $D(I)$ and in the latter case there is an edge in $D^\infty(I)$ with tail $j$ and head in $I \setminus J$.
	
	By considering $\Dv$ we can deduce the corresponding statement for ingoing edges.
\end{proof}

For a sequence $\mathcal{A}:= (i_1, \dots, i_n)$ of elements in $I$ with $n \geq 2$, we call a path $P$ an \emph{$\mathcal{A}$--staircase} if 
$P$ is the concatenation of paths $P_1, Q_2, P_2, Q_3, \dots, Q_{n-1}, P_{n-1}$, where
\begin{enumerate}[label=(\alph*)]
	\item\label{itm:staircase_1} $P_j$ is an $R_{i_j}$--$R_{i_{j+1}}$~path that is internally disjoint to $\bigcup_{i \in I} R_i$ for every $j \in [n-1]$, and
	\item\label{itm:staircase_2} $Q_j$ is a non-trivial path in $R_{i_j}$ that starts in the end vertex of $P_{j-1}$ and ends in the start vertex of $P_j$ for every $j \in [n-1] \setminus \{1\}$.
\end{enumerate}
If each of these paths consists of precisely one edge, we call $P$ \emph{simple}.

\begin{proposition}\label{prop:zickzack}
	Let $D$ be a digraph and let $(R_i)_{i \in I}$ be a family of disjoint equivalent out-rays or a family of disjoint equivalent in-rays.
	For every finite set $X \subseteq V(D)$ and every sequence $\mathcal{A}:= (i_1, \dots, i_n)$ of distinct elements of $I$ with $i_j i_{j+1} \in E(D^\infty(I))$ there exists an $\mathcal{A}$--staircase that is disjoint to $X$.
\end{proposition}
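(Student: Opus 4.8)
The plan is to prove the statement for families of out-rays; the in-ray case then follows by applying the out-ray case to $\Dv$, under which in-rays become out-rays, an in-ray $S$ intersecting every $R_i$ infinitely often becomes such an out-ray, the edge relation of $D^\infty(I)$ becomes that of the analogous digraph for $\Dv$ with all edges reversed, and an $\mathcal{A}$--staircase corresponds to a staircase for the reversed sequence $i_n,\dots,i_1$.

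So let the $R_i$ be out-rays. The key input is that, for each $j\in[n-1]$, the edge $i_j i_{j+1}$ of $D^\infty(I)$ is the image under $\sigma$ of infinitely many edges of $D(I)$ from $i_j$ to $i_{j+1}$, hence of infinitely many $\bigcup_{i\in I}R_i$-paths $A^j_1,A^j_2,\dots$ in $S$, each running from a vertex $s^j_k\in R_{i_j}$ to a vertex $t^j_k\in R_{i_{j+1}}$. Since $i_j\neq i_{j+1}$ and two distinct paths $\sigma(e),\sigma(f)$ meet only if the head of one is the tail of the other, the paths $A^j_1,A^j_2,\dots$ are pairwise disjoint; in particular $s^j_1,s^j_2,\dots$ are pairwise distinct vertices of $R_{i_j}$ and hence cofinal in it (a ray has only finitely many vertices below any given one), and likewise the $t^j_k$ are cofinal in $R_{i_{j+1}}$. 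Using the same remark together with the distinctness of $i_1,\dots,i_n$, one sees that $A^j_k$ and $A^{j'}_\ell$ are disjoint whenever $|j-j'|\ge 2$, and that $A^j_k$ and $A^{j+1}_\ell$ can meet only in the single vertex $t^j_k$, and then only if $t^j_k=s^{j+1}_\ell$.

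Next I would build the staircase by choosing $P_j:=A^j_{k_j}$ recursively for $j=1,\dots,n-1$, writing $s^j,t^j$ for its endpoints. Fix beforehand, for each $j$ with $2\le j\le n-1$, a tail $T_j$ of $R_{i_j}$ with $V(T_j)\cap X=\emptyset$, which exists because $X$ is finite. At step $j$, pick $P_j$ subject to: (i) $V(P_j)\cap X=\emptyset$; (ii) if $j\ge 2$ then $t^{j-1}<_{R_{i_j}}s^j$; and (iii) if $j\le n-2$ then $t^j\in V(T_{j+1})$. Condition (i) forbids at most $|X|$ indices $k$, since the $A^j_k$ are pairwise disjoint so each vertex of $X$ lies in at most one of them; conditions (ii) and (iii) each forbid only finitely many indices, by the cofinality of $(s^j_k)_k$ and of $(t^j_k)_k$; so a valid $k_j$ exists at every step. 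Finally, for $2\le j\le n-1$ let $Q_j$ be the subpath of $R_{i_j}$ from $t^{j-1}$ to $s^j$: it is a non-trivial path by (ii), and since $t^{j-1}\in V(T_j)$ (by (iii) at step $j-1$) and $T_j$ is a tail of $R_{i_j}$, the whole of $Q_j$ lies in $T_j$ and hence avoids $X$.

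It remains to verify that the concatenation $P_1,Q_2,P_2,\dots,Q_{n-1},P_{n-1}$ is a path; granting this, it is an $\mathcal{A}$--staircase by construction, and it avoids $X$ because each of its pieces does. This verification is bookkeeping resting on the distinctness of $i_1,\dots,i_n$: each $P_a$ meets $\bigcup_{i\in I}R_i$ only in its two endpoints $s^a\in R_{i_a}$ and $t^a\in R_{i_{a+1}}$, so $P_a$ is disjoint from every $Q_b$ with $b\notin\{a,a+1\}$ (and when $b\in\{a,a+1\}$ the two pieces are consecutive in the concatenation and share exactly the intended endpoint); distinct $Q_b$'s lie on distinct rays; and two distinct $P_a,P_b$ are disjoint, the only borderline case $b=a+1$ being excluded by $t^a<_{R_{i_{a+1}}}s^{a+1}$ from (ii). I expect the one genuine obstacle — and the reason for condition (iii) and the tails $T_j$ — to be the orientation of the rays: in an out-ray the linking segment $Q_j$ can only run forward from the entry vertex $t^{j-1}$, so the $P_j$ must be chosen so that on each ray the entry vertex lies both before the exit vertex and beyond every vertex of $X$; the lookahead in (iii) is exactly what arranges this, and everything else is routine.
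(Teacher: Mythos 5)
Your proof is correct and takes essentially the same approach as the paper's: the paper proceeds by induction on $n$, at each step choosing the next connecting path among the infinitely many disjoint $\sigma$-paths so that it avoids the finite up-closure $\Down{X}_I \cup \Down{\hat{P}}_I$ of the forbidden set and of the staircase built so far, which plays exactly the role of your conditions (i)--(iii). Your explicit cofinality bookkeeping with the lookahead tails $T_j$, and your reduction of the in-ray case to $\Dv$ (where the paper instead runs the mirrored construction from the other end of the sequence), are only organisational differences.
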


\begin{proof}
	We prove the statement by induction on $n$.
	For $n = 2$, some $R_{i_1}$--$R_{i_2}$~path that is internally disjoint to $\bigcup_{i \in I} R_i$ and avoids $X$, which exists since $i_1 i_2 \in E(D^\infty(I))$, is the desired $\mathcal{A}$--staircase avoiding $X$.
	
	If $(R_i)_{i \in I}$ is a family of out-rays, we assume that $\hat{P}$ is some $\hat{\mathcal{A}}$--staircase avoiding $\Down{X}_I$, where $\hat{\mathcal{A}} := i_1, \dots, i_{n-1}$ for some $n > 2$.
	Let $P_{n-1}$ be some $R_{i_{n-1}}$--$R_{i_n}$~path avoiding $\Down{X}_I \cup \Down{\hat{P}}_I$ that is internally disjoint to $\bigcup_{i \in I} R_i$, which exists since $i_{n-1} i_n \in E(D^\infty(I))$.
	Further let $Q_{n-1}$ be the subpath of $R_{i_{n-1}}$ starting in the end vertex of $\hat{P}$ and ending in the start vertex of $P_{n-1}$.
	Note that $Q_{n-1}$ avoids $X$ since $\hat{P}$ avoids $\Down{X}_I$.
	Then the concatenation of $P_{n-1}, Q_{n-1}$ and $\hat{P}$ is the desired $\mathcal{A}$--staircase avoiding $X$.
	
	If $(R_i)_{i \in I}$ is a family of in-rays, we consider $\hat{\mathcal{A}} := i_2, \dots, i_{n}$ for some $n > 2$.
	By a similar construction we obtain the desired $\mathcal{A}$--staircase avoiding $X$.
\end{proof}

	Finally, we define the bidirected and cyclic quarter-grids formally.
	The bidirected and the cyclic quarter-grid $D'$ are constructed from the disjoint union of out-rays $(R_i)_{i \in \N}$ by adding digraphs $(G_n)_{n \geq 2}$, called \emph{girders}, such that
	\begin{itemize}
		\item $G_{n}$ avoids $\bigcup_{i \in [n-1] \setminus \{1\}}\Down{G_i}_\N$,
		\item $\bigcup_{n \geq 2} G_n \cup \bigcup_{i \in \N} R_i$ does not contain a vertex of in-degree and out-degree one, and
		\item the first vertex of $R_i$ is disjoint from $\bigcup_{n \geq 2} G_n$ for every $i \in \N$.
	\end{itemize}
	If $G_n$ is the concatenation of a simple $1, \dots, n$--staircase $S_n'$, an edge in $R_n$ and a simple $n, \dots, 1$--staircase $S_n''$ such that $S_n''$ avoids $\Down{S_n'}_\N$ for every $n \geq 2$, then $D'$ is a \emph{bidirected quarter-grid}.
	
	If $G_n$ is the concatenation of a simple $1, \dots, n$--staircase $S_n$, an edge in $R_n$ and an $R_n$--$R_1$~edge $A_n$ such that $A_n$ avoids $\Down{S_n}_\N$ for every $n \geq 2$, then $D'$ is an \emph{ascending cyclic quarter-grid}.
	Similarly, if $G_n$ is the concatenation of an $R_1$--$R_n$~edge $A_n$, an edge in $R_n$ and a simple $n, \dots, 1$--staircase $S_n$ such that $S_n$ avoids $\Down{A_n}_\N$ for every $n \geq 2$, then $D'$ is a \emph{descending cyclic quarter-grid}.\footnote{Hamann and Heuer defined the ascending cyclic quarter-grid and the descending cyclic quarter-grid slightly differently~\cite{hamann}. By deleting the first vertex of each $R_i$ we obtain their cyclic quarter-grids.}
	In both cases, we refer to $A_n$ and the subdivisions of $A_n$ as \emph{arches}.
	
	\begin{proposition}
		Let $D'$ be a bidirected quarter grid or a cyclic quarter-grid.
		Then all out-rays contained in $D'$ are equivalent.
		Furthermore, there is no in-ray in $D'$.
	\end{proposition}
	\begin{proof}
		Let $(R_n)_{n \in \N}$ be the vertical rays of $D'$ and let $(G_n)_{n \in \N}$ be the girders of $D'$.
		We set $Z_n:= \Down{G_n}_{\N} \cup \bigcup_{i \in [n]} V(G_i)$ and note that $Z_1 \subseteq Z_2 \subseteq \dots$ is an increasing sequence of finite subsets of $V(D')$ such that $\bigcup_{n \in \N} Z_n = V(D')$.
		There does not exists an $n \in \N$ for which there is an edge with head in $Z_n$ and tail in $V(D') \setminus Z_n$, which implies that there is no in-ray in $D'$.
		Let $R$ be an arbitrary out-ray in $D'$.
		It suffices to show that $R$ is equivalent to $R_1$.
		Suppose for a contradiction that there is a finite set $X \subseteq V(D')$ such that there is either no $R_1$--$R$~path in $D' -X$ or no $R$--$R_1$~path in $D' - X$.
		
		Since $X$ is finite, there is $n \in \N$ such that $X \subseteq Z_n$.
		Then there is $z \in V(R) \setminus Z_{n+1}$.
		We can assume without loss of generality that $z$ is no the start vertex of $R$ and thus not the start vertex of some $R_i$.
		If $z$ is contained in some girder $G_k$, then $k \geq n+1$, which implies that $G_k \subseteq D' - X$ contains an $R_1$--$R$~path and an $R$--$R_1$~path.
		Otherwise, $z \in V(R_i)$ for some $i \in \N$, and thus there are $k, \ell \in \N$ with $n+1 \leq k \leq \ell$ such that the girders $G_k$ and $G_\ell$ witness that there exist an $R_1$--$R$~path and an $R$--$R_1$~path in $D' - X$, a contradiction.
	\end{proof}

\section{Infinite strong component}\label{sec:component}
In this section we show that \cref{thm:main} holds true if there exists $J \subseteq I$ such that $D^\infty(J)$ contains an infinite strong component.

\begin{lemma}\label{lem:thick_component}
	Let $(R_i)_{i \in I}$ be an infinite family of disjoint equivalent out-rays in a digraph~$D$.
	If $D^\infty(I)$ has an infinite strong component, then $D$ contains a subdivision $D'$ of either a bidirected quarter-grid or a cyclic quarter-grid such that each vertical out-ray of~$D'$ is an element of $(R_i)_{i \in I}$.
\end{lemma}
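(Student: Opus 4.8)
The plan has three stages: first pass to a subfamily along which $D^\infty(I)$ becomes strongly connected; then extract from this infinite strongly connected digraph a \emph{skeleton}, meaning an infinite subfamily to be used as vertical rays together with a routing recipe for consecutive rays; and finally turn the skeleton into a subdivided quarter-grid by building the girders one by one with the help of~\cref{prop:zickzack}.

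For the first stage, let $C$ be an infinite strong component of $D^\infty(I)$ and put $J := V(C)$. Any $(\bigcup_{i\in J}R_i)$-path in $S$ may bypass the rays indexed by $I\setminus J$, so every edge of $D^\infty(I)$ with both ends in $J$ survives as an edge of $D^\infty(J)$; hence $C$ is a spanning strongly connected subgraph of $D^\infty(J)$, which is therefore itself strongly connected and infinite. Passing to $J$ and relabelling, I assume henceforth that $C := D^\infty(I)$ is strongly connected with $I$ countably infinite.

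The second stage is the crux. I want an infinite set $W = \{i_1, i_2, \dots\} \subseteq I$ together with walks $\gamma_n$ in $C$ from $i_n$ to $i_{n+1}$ meeting $W$ only in $\{i_n, i_{n+1}\}$, for all $n$, and in addition either (bi) walks $\delta_n$ in $C$ from $i_{n+1}$ to $i_n$ meeting $W$ only in $\{i_n, i_{n+1}\}$ for all $n$; or (out) walks $\varepsilon_n$ in $C$ from $i_n$ to $i_1$ meeting $W$ only in $\{i_1, i_n\}$ for all $n \ge 2$; or (in) the reversal of (out). I distinguish two cases. If $C - X$ has an infinite strong component for every finite $X \subseteq V(C)$, I would build a skeleton of type (bi) recursively: starting from any vertex $i_1$ of the infinite strong component $C_1 := C$, once $i_1, \dots, i_n$ have been chosen with $i_n$ inside an infinite strong component $C_n$ of $C - \{i_1, \dots, i_{n-1}\}$, I let $C_{n+1}$ be an infinite strong component of $C_n - i_n$, pick $i_{n+1}$ in $C_{n+1}$ outside the finitely many vertices used so far, and take $\gamma_n, \delta_n$ inside the strongly connected $C_n$; then $\gamma_n, \delta_n$ avoid $i_1, \dots, i_{n-1}$ automatically, while later-chosen indices are kept off $\gamma_n, \delta_n$ by the selection rule. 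Otherwise some finite $X_0$ is such that every strong component of $C - X_0$ is finite; the condensation of $C - X_0$ is then an infinite DAG, hence an infinite poset, so it contains an infinite antichain or an infinite chain. An infinite antichain $(S_k)$ gives a skeleton of type (bi) with $i_k \in S_k$: every walk between two of the $i_k$ must cross the finite core $X_0$, and outside $X_0$ it stays within components comparable to the relevant $S_k$, so it meets no other $i_m$; a careful choice of the $i_k$ handles the part of the walk inside the core. An infinite chain yields an out-ray (or, after reversal, an in-ray) inside $C - X_0$ meeting infinitely many of the finite components in condensation order; using that infinitely many of these components send an edge into $X_0$, while $X_0$ reaches the first component along a walk that avoids all later ones, one reads off a skeleton of type (out) (respectively (in)). Making this bookkeeping precise — keeping the recursively chosen walks and vertices from interfering, and routing the dominating walks around the later rays — is the main obstacle.

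For the third stage I would realise the skeleton as a subdivision $D'$ of a bidirected quarter-grid in case (bi), and of an outward (respectively inward) dominated directed quarter-grid in case (out) (respectively (in)), with vertical rays the relabelled family $(R_{i_n})_{n \in \N}$. I would construct the girders $G_2, G_3, \dots$ recursively. Given $G_2, \dots, G_{n-1}$, I build the up-staircase $S_n'$ of $G_n$ as a $1, \dots, n$-staircase from $R_{i_1}$ to $R_{i_n}$ through $R_{i_2}, \dots, R_{i_{n-1}}$ that realises $\gamma_1, \dots, \gamma_{n-1}$ and avoids $\Down{\bigcup_{m=2}^{n-1} G_m}_\N$; this is possible by repeated use of~\cref{prop:zickzack}, where whenever a walk $\gamma_j$ passes through an unchosen ray $R_k$ I splice in a tail-segment of $R_k$ chosen disjoint from everything used so far, so that distinct passages and distinct girders use disjoint segments. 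Then I append an edge along $R_{i_n}$ and, in case (bi), an $n, \dots, 1$-staircase $S_n''$ realising $\delta_{n-1}, \dots, \delta_1$ and avoiding $\Down{S_n'}_\N$, or, in case (out), an arch $A_n$ realising $\varepsilon_n$ and avoiding $\Down{S_n'}_\N$ (and symmetrically in case (in)). Starting each girder sufficiently far along the rays keeps the first vertex of every $R_{i_n}$ off $\bigcup_{m \ge 2} G_m$ and secures the remaining conditions in the definition of a (dominated directed) quarter-grid, so $D'$ is as required.
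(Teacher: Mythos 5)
Your overall architecture diverges from the paper's at the decisive point. The paper's proof is short because it invokes \cref{cor:infinite_star_comb} as a black box: an infinite strongly connected digraph contains $\mathcal{D}(K_{1,\infty})$, $\mathcal{D}(S)$ or a dominated directed ray as a butterfly minor, and the rest is the routine realisation of that minor by disjoint $\bigcup R_{\sigma(x)}$--paths (your stage three, which is fine in spirit and matches the paper). Your stage two is in effect an attempt to reprove that structural theorem from scratch, and this is where the genuine gaps lie; you flag the ``bookkeeping'' as the main obstacle, but the problems are not mere bookkeeping. First, your case-1 recursion is not supported by the stated dichotomy: the hypothesis that $C-X$ has an infinite strong component for every finite $X$ does not imply that $C_n - i_n$ has one, because the infinite strong component of $C-\{i_1,\dots,i_n\}$ may lie outside $C_n$ (it could be a different, untouched component of $C-\{i_1,\dots,i_{n-1}\}$), so the nested sequence $C_1 \supseteq C_2 \supseteq \cdots$ can die. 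Second, in case 2 an infinite chain in the condensation only gives you a vertex from which infinitely many vertices are reachable; a spanning out-arborescence of its reachability set then yields an out-ray \emph{or} a vertex of infinite out-degree, and you assert the former without excluding the latter --- yet the infinite-out-degree alternative is exactly what forces the $\mathcal{D}(K_{1,\infty})$ and dominated-directed-ray outcomes in the cited theorem and cannot be waved away. Third, even granting the out-ray, your arches are unjustified: a walk from $i_n$ back into $X_0$ is an initial segment in $C-X_0$ that lives in components reachable from $i_n$'s component, which may well contain the later $i_m$'s, so ``meeting $W$ only in $\{i_1,i_n\}$'' does not come for free.

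The antichain sub-case is closer to being correct, but as written the middle portions of your connecting walks (between their first and last visits to $X_0$) can leave $X_0$ and pass through other antichain components; this is repairable by fixing one $x$--$y$ path in $C$ for each of the finitely many pairs $x,y \in X_0$ and discarding the finitely many antichain components these paths meet, but you do not say so. Stages one and three are sound: passing to $J=V(C)$ indeed makes $D^\infty(J)$ strongly connected, and once a skeleton of type (bi), (out) or (in) is in hand, the recursive girder construction via \cref{prop:zickzack} (taking paths rather than walks, and splicing in fresh tail segments of the intermediate rays) does produce the required subdivision, with the hub of a $\mathcal{D}(K_{1,\infty})$ absorbed into a type-(bi) skeleton exactly as the paper routes its paths along $R_{\sigma(c)}$. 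If you do not wish to cite the structure theorem, you must either complete the missing cases above (essentially reproving it) or restructure the dichotomy along the lines of the paper's \cref{rem:almost_out_degree}, which records precisely the trichotomy in-ray\,/\,out-ray\,/\,vertex of infinite in-degree that your sketch omits.
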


The proof of~\cref{lem:thick_component} builds on a result \cite{infinite}*{Corollary 1.4} about strongly connected butterfly minors in strongly connected digraphs.
We begin by introducing butterfly minors.
Given digraphs $D$ and $H$, we call a map $\mu $ assigning every $e\in E(H)$ an edge of $D$ and every $v \in V(H)$ a subgraph of $D$, such that
\begin{itemize}
	\item $\mu (v) \cap \mu (w) = \emptyset$ for every $v \neq w \in V(H)$,
	\item for every $v \in V(H)$ the digraph $\mu(v)$ is a union of an in-arborescence $T_{\mathrm{in}}^{\mu(v)}$ and an out-arborescence $T_{\mathrm{out}}^{\mu(v)}$ that only have their roots in common, and
	\item for every $e=(u,v) \in E(H)$ the edge $\mu(e)$ has its tail in $T_{\mathrm{out}}^{\mu(u)}$ and its head in $T_{\mathrm{in}}^{\mu(v)}$
\end{itemize}
a \emph{tree-like model} of $H$ in $D$.
Further, a digraph $H$ is a \emph{butterfly minor} of a digraph $D$ if there exists a tree-like model of $H$ in $D$.\footnote{
For finite digraphs, butterfly minors are commonly defined by sequences of butterfly contractions.
Amiri, Kawarabayashi, Kreutzer and Wollan \cite{amiri2016erdos}*{Lemma 3.2} showed that, for finite digraphs, this definition coincides with our definition.
For infinite digraphs, constructing minors by sequences of butterfly contractions (with a suitable definition for limit steps) generalises our definition of butterfly minors: $\mu(v)$ can also be a digraph whose vertices have out-degree precisely one and no edge $\mu(e)$ has tail in $\mu(v)$, and, similarly, $\mu(v)$ can be a digraph whose vertices have in-degree precisely one and no edge $\mu(e)$ has head in $\mu(v)$ \cite{infinite}*{Section 2.4}.
However, in both cases there is generally no vertex in $\mu(v)$ that can reach all edges leaving $\mu(v)$ and that can be reached from all edges entering $\mu(v)$, which is a fundamental property of butterfly minors in finite digraphs.
}

The union of an out-ray $S$ rooted at $r$ and edges $(e_v)_{v \in V(S) \setminus \{r\}}$, where $e_v$ has head $r$ and tail $v$, is called a \emph{dominated directed ray}.
Furthermore, the union of an in-ray $S$ rooted at $r$ and edges $(e_v)_{v \in V(S) \setminus \{r\}}$, where $e_v$ has head $v$ and tail $r$, is also called a \emph{dominated directed ray}.

\begin{theorem}\cite{infinite}*{Corollary 1.4} \label{cor:infinite_star_comb}
	Every infinite strongly connected digraph contains either $\mathcal{D}(K_{1, \infty})$, $\mathcal{D}(S)$ for an undirected ray $S$ or a dominated directed ray as a butterfly minor.
\end{theorem}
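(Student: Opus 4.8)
The plan is to fix a root $r \in V(D)$, exploit strong connectivity to simultaneously control out-reachability from $r$ and in-reachability to $r$, and then run a K\"onig-type case analysis that produces one of the three structures as a butterfly minor. First I would reduce to the countable case: a standard closure argument (repeatedly adjoining paths between already-chosen vertices) yields a countable, infinite, strongly connected subdigraph $D_0 \subseteq D$, and any tree-like model in $D_0$ is automatically a tree-like model in $D$; so I may assume $D$ is countable. Fix $r$ and, using that every vertex is reachable from $r$, build a spanning out-arborescence $T$ of $D$ rooted at $r$ by enumerating $V(D)$ and greedily attaching, for each new vertex, the unused suffix of an $r$--path to it. Since $V(D)$ is infinite, $T$ is infinite, so $T$ contains either a vertex of infinite out-degree (infinitely many children) or an out-ray from $r$. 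Throughout, strong connectivity additionally supplies, for every $v$, a path back to $r$, which I use to build ``return paths''.

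In the first case, suppose $v \in V(T)$ has infinitely many children $w_1, w_2, \dots$, and let $T_n$ be the subtree of $T$ rooted at $w_n$; the $T_n$ are pairwise disjoint and meet the $r$--$v$ path of $T$ only at vertices other than $v$. Each $T_n$ contains a vertex that reaches $r$, hence reaches $v$ along $T$, giving a return path $Q_n$ ending at $v$. The tree edges $v \to w_n$ and the final segments of the $Q_n$ organise into an in- and out-arborescence meeting only at $v$, which I use as the branch set of the centre of $\mathcal{D}(K_{1,\infty})$, while infinitely many $T_n$ serve as leaf branch sets and the edges $v\to w_n$ together with the entry edges of $Q_n$ into $v$ serve as the connector edges. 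The point that needs work is disjointness: a return path $Q_n$ may pass through other subtrees $T_m$. I would resolve this by a greedy diagonalisation over $n$, at each step either selecting a return path avoiding the finitely many branch sets committed so far, or else detecting that returns are forced to concentrate, which pushes the argument into one of the remaining cases.

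In the second case, $T$ contains an out-ray $R = x_0 x_1 x_2 \cdots$ with $x_0 = r$, and for each $i$ strong connectivity gives a path $Q_i$ from $x_i$ towards $r$ that first re-meets $R$ at some vertex $x_{f(i)}$ with $f(i) \le i$. I would then analyse these chords $x_i \leadsto x_{f(i)}$ by an infinite-pigeonhole (Ramsey-type) argument on their interval structure: if infinitely many chords can be chosen internally disjoint and return all the way to one fixed vertex, I extract a dominated directed ray rooted there; if infinitely many chords are short and can be chained so that each returns just past the previous one, I build a reverse ray running alongside $R$ and obtain $\mathcal{D}(S)$; and if infinitely many chords funnel through one common vertex, I again obtain $\mathcal{D}(K_{1,\infty})$ centred at that vertex. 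The in-oriented variants (in-ray and inward dominated ray) follow by running the entire argument on the reverse digraph, whose out-structures are the in-structures of $D$.

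The step I expect to be the main obstacle is exactly the branch-set disjointness bookkeeping in both cases: strong connectivity hands us the return paths and chords with no control over which previously used branch sets they traverse, and the trichotomy in the statement really reflects the three stable patterns into which these paths can be forced. The clean way to make this rigorous is a single recursive construction that maintains a finite, pairwise-disjoint partial tree-like model together with an infinite reservoir of still-available return targets, so that at each stage exactly one of the three structures gains a new rung; a pigeonhole step over the infinitely many stages then guarantees that one of them grows infinitely often. Checking that the resulting branch sets genuinely meet the butterfly-minor conditions --- each a union of an in- and an out-arborescence sharing only their root, with the connector edges correctly seated between the out-part of the tail and the in-part of the head --- is then a routine verification.
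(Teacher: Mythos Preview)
The paper does not prove this statement: it is quoted verbatim as Corollary~1.2 of the external reference~\cite{infinite} and used as a black box in the proof of \cref{lem:thick_component}. There is therefore no in-paper proof to compare your proposal against.

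For what it is worth, your outline is a reasonable strategy toward such a result --- reduce to countable, take a spanning out-arborescence from a root, and split on whether it has a vertex of infinite out-degree or an out-ray, then analyse return paths/chords --- and you correctly identify the real difficulty as the disjointness bookkeeping for the branch sets. That said, this remains a sketch rather than a proof: the ``Ramsey-type argument on the interval structure of the chords'' is doing all the work in the out-ray case, and you have not actually shown that the three alternatives you list are exhaustive (for instance, chords could be long but with pairwise distinct, unbounded return points and still not be internally disjoint, which is neither your ``short chain'' case nor your ``common target'' case without further argument). If you want to turn this into a self-contained proof you will have to make that trichotomy precise; but for the purposes of the present paper you may simply cite the external result as the authors do.
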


\begin{proof}[Proof of~\cref{lem:thick_component}]
	Let $C$ be an infinite strong component of $D^\infty(I)$.
	The strong component $C$ contains a butterfly minor $M$ that is $\mathcal{D}(K_{1, \infty}), \mathcal{D}(S)$ for an undirected ray $S$ or a dominated directed ray by \cref{cor:infinite_star_comb}.
	
	Let $\mu$ be a tree-like model of $M$ in $D^\infty(I)$.
	For each vertex $v \in V(M)$ let $\sigma(v)$ be the common root of $T_{\mathrm{in}}^{\mu(v)}$ and $T_{\mathrm{out}}^{\mu(v)}$.
	Note that for every $vw \in E(M)$ there exists a $\sigma(v)$--$\sigma(w)$~path in $D^\infty(I)$ that is internally disjoint to $\bigcup_{x \in V(M)} \sigma(x)$.
	This implies that there are infinitely many disjoint $R_{\sigma(v)}$--$R_{\sigma(w)}$~paths that are internally disjoint to $\bigcup_{x \in V(M)} R_{\sigma(x)}$.
	
	If $M$ is $\mathcal{D}(S)$ for an undirected ray $S$, then we can construct a subdivision of the bidirected quarter-grid whose vertical rays are $(R_{\sigma(x)})_{x \in V(M)}$ by adding the desired $\bigcup_{x \in V(M)} R_{\sigma(x)}$-paths recursively.
	If $M$ is $\mathcal{D}(K_{1, \infty})$, let $c$ be the central vertex of $K_{1, \infty}$.
	We can construct a subdivision of the bidirected quarter-grid whose vertical rays are $(R_{\sigma(x)})_{x \in V(M) \setminus \{c\}}$ by routing the desired $\bigcup_{x \in V(M) \setminus \{c\}} R_{\sigma(x)}$--paths along $R_{\sigma(c)}$.
	If $M$ is a dominated directed ray, then we can construct a subdivision of a cyclic quarter-grid whose vertical rays are $(R_{\sigma(x)})_{x \in V(M)}$ by adding the desired $\bigcup_{x \in V(M)} R_{\sigma(x)}$-paths recursively.
\end{proof}

\section{In-rays and out-rays}\label{sec:rays}
In this section we prove another sufficient condition for~\cref{thm:main}.
We show that~\cref{thm:main} holds true if there exists $J \subseteq I$ such that $D^\infty(J)$ contains an in-ray or an out-ray.

\begin{lemma}\label{lem:in_rays}
	Let $(R_i)_{i \in I}$ be an infinite family of disjoint equivalent out-rays in a digraph $D$.
	If $D^\infty(I)$ contains an in-ray, then $D$ contains a subdivision $D'$ of either a bidirected quarter-grid or a cyclic quarter-grid such that each vertical ray of $D'$ is an element of $(R_i)_{i \in I}$.
\end{lemma}

\begin{proof}
	For simplicity, we assume without loss of generality that $\N \subseteq I$ and that $(n+1) n \in E(D^\infty(I))$ for every $n \in \N$.
	This implies $(n+1) n \in E(D^\infty(\N))$.
	Let $C$ be the strong component of $D^\infty(\N)$ containing $1$.
	By~\cref{lem:thick_component} we can assume that $C$ is finite.
	Since every element of $\N$ can reach $1$ in $D^\infty(\N)$, there exists no edge in $E(D^\infty(\N))$ with tail in $V(C)$ and head in $\N \setminus V(C)$.
	Thus, by~\cref{lem:out-degree}, there exists a vertex $\beta(1) \in V(C)$ of infinite out-degree in $D(\N)$.
	Let $\mathcal{N}$ be the out-neighbourhood of $\beta(1)$ in $D(\N)$.
	
	We construct recursively a subdivision of a descending cyclic quarter-grid whose first vertical out-ray is $R_{\beta(1)}$ and whose other vertical out-rays are in $(R_{k})_{k > \beta(1)}$.
	We assume that we fixed the first $m$ vertical out-rays $R_{\beta(1)}, \dots, R_{\beta(m)}$ and the first $m-1$ girders $G_2, \dots, G_m$ of the desired subdivision of a cyclic quarter-grid.
	
	For every $x \in \mathcal{N}$ we fix an $R_{\beta(1)}$--$R_{x}$~path $O_x$ in $S$ that is internally disjoint to $\bigcup_{n \in \N} R_n$, where $S$ is the out-ray used for the construction of $D(\N)$.
	Since the paths $(O_x)_{x \in \mathcal{N}}$ are contained in $S$ and as they are internally disjoint to $\bigcup_{n \in \N} R_n$, they are disjoint.
	Thus all but finitely many paths $(O_x)_{x \in \mathcal{N}}$ avoid $\bigcup_{j \in [m] \setminus \{1\}} \Down{G_j}_\N$.
	We pick $\beta(m+1) \in \mathcal{N} \setminus [\beta(m)]$ such that $R_{\beta(m+1)}$ and $O_{\beta(m+1)}$ avoid $\bigcup_{j \in [m] \setminus \{1\}} \Down{G_j}_\N$.
	
	It remains to construct a girder $G_{m+1}$.
	We consider a $\beta(m+1), \beta(m+1) -1, \dots, \beta(1)$--staircase $Q$ avoiding $\bigcup_{j \in [m] \setminus \{1\}} \Down{G_j}_\N \cup \Down{O_{\beta(m+1)}}_\N$, which exists by~\cref{prop:zickzack}.
	Then the concatenation of the path $O_{\beta(m+1)}$, some subpath of $R_{\beta(m+1)}$ and the path $Q$ forms the desired girder $G_{m+1}$.
	This completes the construction and finishes the proof.
\end{proof}

\begin{lemma}\label{lem:out_rays}
	Let $(R_i)_{i \in I}$ be an infinite family of disjoint equivalent out-rays in a digraph $D$.
	If $D^\infty(I)$ contains an out-ray, then $D$ contains a subdivision $D'$ of either a bidirected quarter-grid or a cyclic quarter-grid such that each vertical ray of $D'$ is an element of $(R_i)_{i \in I}$.
\end{lemma}

The proof of~\cref{lem:out_rays} follows the proof of~\cref{lem:in_rays} but constructs a subdivision of a cyclic quarter-grid that is ascending rather than descending.
Since the arches are terminal segments of the girders in ascending cyclic quarter-grids, we have to construct the girders carefully.

\begin{proof}
	For simplicity, we assume without loss of generality that $\N \subseteq I$ and that $n (n+1) \in E(D^\infty(I))$ for every $n \in \N$.
	
	\begin{claim}\label{clm:special_out_ray}
		There exist an out-ray $\hat{S}$ in $D$, a strictly increasing sequence $(\alpha(n))_{n \in \N}$ of natural numbers and a strictly $\leq_{\hat{S}}$-increasing sequence $(x_n y_n)_{n \in \N}$ of edges in $\hat{S}$ with $x_n y_n \in E(R_{\alpha(n)})$ such that for every $n > 1$
		\begin{enumerate}[label=(\alph*)]
			\item\label{itm:construction_ray_1n} a terminal segment $P_n$ of $y_{n -1} \hat{S} x_n$ is an $ \alpha(1), \alpha(1) + 1, \dots, \alpha(n)$--staircase,
			\item\label{itm:construction_ray_2n} $y_n \hat{S}$ avoids $\Down{\hat{S} x_n}_\N$, and
			\item\label{itm:construction_ray_3n} $x_n$ is $\leq_{\hat{S}}$-minimal in $\hat{S} \cap R_{\alpha(n)}$.
		\end{enumerate}
		
	\end{claim}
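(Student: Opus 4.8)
The plan is to build $\hat S$ together with $(\alpha(n))_{n\in\N}$ and $(x_ny_n)_{n\in\N}$ by a recursion that appends one ``link'' to $\hat S$ per step. After step $n$ I have a finite initial segment $\hat S_n$ of $\hat S$ that ends with the edge $x_ny_n\in E(R_{\alpha(n)})$, with $\alpha(1)<\dots<\alpha(n)$ already fixed and with \ref{itm:construction_ray_1n}--\ref{itm:construction_ray_3n} satisfied for all indices up to $n$; the desired out-ray is then $\hat S:=\bigcup_n\hat S_n$. The one invariant I would carry along is that every piece appended in a later step avoids the finite set $\Down{\hat S_n}_\N$, apart from the vertex at which it attaches to what was built before. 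This single avoidance does all the work: it keeps $\hat S$ simple, it forces each ray to be entered ``from below'', and it is exactly what \ref{itm:construction_ray_2n} demands. For the base case I would put $\alpha(1):=1$, start $\hat S_1$ at the root of $R_1$, and let $x_1y_1$ be the first edge of $R_1$.

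For the step from $n$ to $n+1$ I would first move from $y_n\in R_{\alpha(n)}$ back onto $R_{\alpha(1)}$. If $n\ge2$, the equivalence of $R_{\alpha(n)}$ and $R_{\alpha(1)}$ yields an $R_{\alpha(n)}$--$R_{\alpha(1)}$~path avoiding the finite set $\Down{\hat S_n}_\N$; passing to a subpath I obtain a path $T$ that meets $R_{\alpha(n)}$ only in its first vertex $t$ and $R_{\alpha(1)}$ only in its last vertex $t'$ (for $n=1$ take $T$ to be the one-vertex path $y_1$, so $t=t'=y_1$). Because $\Down{\hat S_n}_\N$ contains the initial segment of $R_{\alpha(n)}$ up to $y_n$, we get $y_n\le_{R_{\alpha(n)}}t$, so the segment $y_nR_{\alpha(n)}t$ of $R_{\alpha(n)}$ is available. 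Next I would pick $\alpha(n+1)>\alpha(n)$ large enough that $R_{\alpha(n+1)}$ is disjoint from the finite graph $\hat S_n\cup y_nR_{\alpha(n)}t\cup T$, and apply \cref{prop:zickzack}: since the consecutive pairs of the sequence $\alpha(1),\alpha(1)+1,\dots,\alpha(n+1)$ lie in $E(D^\infty(I))$ by assumption, it provides an $\alpha(1),\alpha(1)+1,\dots,\alpha(n+1)$--staircase $W$ avoiding $\Down{\hat S_n\cup y_nR_{\alpha(n)}t\cup T}_\N$. As this set contains the initial segment of $R_{\alpha(1)}$ up to $t'$, the first vertex $s$ of $W$ lies strictly beyond $t'$ on $R_{\alpha(1)}$, so the segment $t'R_{\alpha(1)}s$ is available; moreover $W$ meets $R_{\alpha(1)}$ only in $s$ and meets $R_{\alpha(n+1)}$ only in its last vertex, which I call $x_{n+1}$. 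Letting $y_{n+1}$ be the $R_{\alpha(n+1)}$-successor of $x_{n+1}$, I define $\hat S_{n+1}$ to be the concatenation of $\hat S_n$, $\;y_nR_{\alpha(n)}t$, $\;T$, $\;t'R_{\alpha(1)}s$, $\;W$, and the edge $x_{n+1}y_{n+1}$.

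It then remains to verify the invariant and \ref{itm:construction_ray_1n}--\ref{itm:construction_ray_3n} for index $n+1$, which I expect to be routine bookkeeping with the downward closures. The stated properties of $T$ and $W$, the choice of $\alpha(n+1)$, and the avoidances force consecutive pieces of $\hat S_{n+1}$ to meet only in their common endpoint, so $\hat S_{n+1}$ is a simple path properly extending $\hat S_n$ and $\hat S$ is an out-ray. Property \ref{itm:construction_ray_1n} holds with $P_{n+1}:=W$, a terminal segment of $y_n\hat Sx_{n+1}$. For \ref{itm:construction_ray_3n}: no part of $\hat S_{n+1}$ lying before $x_{n+1}$ meets $R_{\alpha(n+1)}$ (by the choice of $\alpha(n+1)$, and since $W$ meets $R_{\alpha(n+1)}$ only in $x_{n+1}$), while every piece appended later avoids $\Down{\hat S_{n+1}}_\N$ and hence meets $R_{\alpha(n+1)}$ only beyond $x_{n+1}$; this also gives $\hat Sx_{n+1}\cap R_{\alpha(n+1)}=\{x_{n+1}\}$, so $y_{n+1}\notin\Down{\hat Sx_{n+1}}_\N$. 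Finally the sets $\Down{\hat S_k}_\N$ increase with $k$, so everything on $\hat S$ after $y_{n+1}$ avoids $\Down{\hat S_{n+1}}_\N\supseteq\Down{\hat Sx_{n+1}}_\N$, which with $y_{n+1}\notin\Down{\hat Sx_{n+1}}_\N$ yields \ref{itm:construction_ray_2n}. The one real subtlety — and where I expect any difficulty — is that each link runs the staircase $W$ back through all previously marked rays $R_{\alpha(1)},\dots,R_{\alpha(n)}$, so one must make sure those re-traversals happen far enough along those rays to keep $\hat S$ simple and to preserve the minimality in \ref{itm:construction_ray_3n}; routing every freshly added piece around $\Down{\cdot}_\N$ of everything built so far makes this automatic, because $\Down{X}_\N$ always contains, for each ray it meets, an entire initial segment of that ray.
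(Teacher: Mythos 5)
Your proposal is correct and follows essentially the same route as the paper: in each step you use equivalence to route a path from $R_{\alpha(n)}$ back to $R_{\alpha(1)}$ avoiding $\Down{\hat S_n}_\N$, choose $\alpha(n+1)$ so that $R_{\alpha(n+1)}$ misses everything built so far, and then attach an $\alpha(1),\alpha(1)+1,\dots,\alpha(n+1)$--staircase via \cref{prop:zickzack}, which is exactly the paper's construction of $\hat S_{m+1}$ from $\hat S_m$, $O$ and $O'$. The only differences are cosmetic (your segments end at $y_n$ rather than $x_n$, and you spell out the verification of \ref{itm:construction_ray_1n}--\ref{itm:construction_ray_3n} that the paper leaves implicit).
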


\begin{center}
	\begin{figure}[ht]
		\begin{tikzpicture}
			
			
			\foreach \x/\y in {0/3,0/6,1/7,0/10,1/11,2/12}{
				\draw[CornflowerBlue!50, line width=4pt, line cap=round] (\x,0.5*\y) to ({\x+1}, 0.5*\y);
			}
			\foreach \x/\y in {1/6,1/10,2/11}{
				\draw[CornflowerBlue!50, line width=4pt, line cap=round] (\x,0.5*\y) to ({\x}, {0.5*(\y+1)});
			}
			
			
			\foreach \y/\z in {0/1,3/6,6/10,10/13}{
				\filldraw (0,{0.5*\y}) circle (1pt);
				\draw[edge] (0,{0.5*\y+0.05}) to (0,{0.5*(\z)-0.05});
			}
			\foreach \y/\z in {1/2,2/3}{
				\filldraw (0,{0.5*\y}) circle (1pt);
				\draw[edge, PastelOrange, thick] (0,{0.5*\y+0.05}) to (0,{0.5*(\z)-0.05});
			}
			
			\foreach \y/\z in {0/3,4/6,7/10,11/13}{
				\filldraw (1,{0.5*\y}) circle (1pt);
				\draw[edge] (1,{0.5*\y+0.05}) to (1,{0.5*(\z)-0.05});
			}
			
			\foreach \y/\z in {3/4,6/7,10/11}{
				\filldraw (1,{0.5*\y}) circle (1pt);
				\draw[edge,PastelOrange, thick] (1,{0.5*\y+0.05}) to (1,{0.5*(\z)-0.05});
			}
			
			\foreach \y/\z in {0/7,8/11,12/13}{
				\filldraw (2,{0.5*\y}) circle (1pt);
				\draw[edge] (2,{0.5*\y+0.05}) to (2,{0.5*(\z)-0.05});
			}
			\foreach \y/\z in {7/8,11/12}{
				\filldraw (2,{0.5*\y}) circle (1pt);
				\draw[edge, PastelOrange, thick] (2,{0.5*\y+0.05}) to (2,{0.5*(\z)-0.05});
			}

			\foreach \y/\z in {0/12}{
				\filldraw (3,{0.5*\y}) circle (1pt);
				\draw[edge] (3,{0.5*\y+0.05}) to (3,{0.5*(\z)-0.05});
			}
			\foreach \y/\z in {12/13}{
				\filldraw (3,{0.5*\y}) circle (1pt);
				\draw[edge, PastelOrange, thick] (3,{0.5*\y+0.05}) to (3,{0.5*(\z)-0.05});
			}
			
			\draw[edge, PastelOrange, thick, smooth] (2,4.05) to[out=90, in=-90] (2.2,4.3) to[out=90, in=90] (0.5,4.3) to[out=-90, in=90] (1.5,4) to[out=-90, in=-90] (-0.2,4.5) to[out=90,in=-90](0,4.95);
			\draw[edge, PastelOrange, thick, smooth] (1,2.05) to[out=90,in=0] (1,2.7) to[out=-180,in=0] (0,2.3) to[out=-180, in=-90] (0,2.95);
			
			\foreach \x/\y in {0/3,0/6,1/7,0/10,1/11,2/12}{
				\draw[PastelOrange,edge, thick] (\x+0.05,0.5*\y) to ({\x+0.95}, 0.5*\y);
			}
			
			
			\foreach \x in {0,1,2,3}{
				\filldraw[gray] (\x,6.6) circle (0.5pt);
				\filldraw[gray] (\x,6.7) circle (0.5pt);
				\filldraw[gray] (\x,6.8) circle (0.5pt);
			}
			
			
			\filldraw[gray] (3.7,4.5) circle (0.5pt);
			\filldraw[gray] (3.9,4.5) circle (0.5pt);
			\filldraw[gray] (4.1,4.5) circle (0.5pt);
			
			\draw[] (0.4,0.5) node {$x_1$};
			\draw[] (0.4,1) node {$y_1$};
			\draw[] (1.4,1.5) node {$x_2$};
			\draw[] (1.4,2) node {$y_2$};
			\draw[] (2.4,3.5) node {$x_3$};
			\draw[] (2.4,4) node {$y_3$};
			\draw[] (3.4,6) node {$x_4$};
			
			\draw[PastelOrange] (-0.5,0.5) node {$\hat{S}$};
			\draw[CornflowerBlue!50] (-0.5,1.5) node {$P_2$};
			\draw[CornflowerBlue!50] (-0.5,3) node {$P_3$};
			\draw[CornflowerBlue!50] (-0.5,5) node {$P_4$};
			
			\foreach \x in {1,2,3,4}{
			\draw[] (\x-1,-0.5) node {$R_{\alpha(\x)}$};
			}
		\end{tikzpicture}
	\caption{An out ray $\hat{S}$ as in~\Cref{clm:special_out_ray}.}
\label{fig:construction_out_ray}
	\end{figure}
\end{center}

	\begin{claimproof}
		We pick recursively the desired strictly increasing sequence of natural numbers $(\alpha(n))_{n \in \N}$, the desired edges $x_n y_n \in E(R_{\alpha(n)})$ with $x_n <_{R_{\alpha(n)}} y_n$ and a strictly increasing sequence of paths $(\hat{S}_n)_{n \in \N}$ such that for every $n > 1$
		\begin{itemize}
			\item $\hat{S}_n$ starts in $x_1$,
			\item $\hat{S}_n$ ends in the edges $x_ny_n$,
			\item a terminal segment of $\hat{S}_n x_n$ is an $ \alpha(1), \alpha(1) + 1, \dots, \alpha(n)$--staircase,
			\item $y_{n-1} \hat{S}_n$ avoids $\Down{\hat{S}_n x_{n-1}}_\N$, and
			\item $x_n$ is $\leq_{\hat{S}_n}$-minimal in $\hat{S}_n \cap R_{\alpha(n)}$.
		\end{itemize}
		Then the out-ray $\hat{S}:= \bigcup_{n \in \N} \hat{S}_n$ is as desired.
		
		Let $x_1 y_1$ be some edge of $R_1$ with $x_1 \leq_{R_1} y_1$, let $\hat{S}_1:= x_1 R_1 y_1$ and set $\alpha(1):= 1$.
		We assume that $x_n y_n$ for $n \in [m]$, $(\alpha(n))_{n \in [m]}$ and $\hat{S}_m$ have been defined for some $m \in \N$. See~\Cref{fig:construction_out_ray}.
		
		First we pick some $R_{\alpha(m)}$--$R_{\alpha(1)}$~path $O$ that avoids $\Down{\hat{S}_m}_\N$, which exists since the rays in $(R_i)_{i \in I}$ are equivalent.
		Second, let $\alpha(m+1) \in \N \setminus [\alpha(m)]$ be such that $(\hat{S}_m \cup O) \cap R_{\alpha(m+1)} = \emptyset$.
		
		Third, let $O'$ be an $ \alpha(1), \alpha(1) + 1, \dots, \alpha(m+1)$--staircase avoiding $\Down{\hat{S}_m}_\N \cup \Down{O}_\N$, which exists by~\cref{prop:zickzack}.
		Let $x_{m+1}$ be the terminal vertex of $O'$ and $y_{m+1}$ be the vertex succeeding $x_{m+1}$ in $\leq_{R_{\alpha(m+1)}}$.
		Then the concatenation of $\hat{S}_m$, some path in $R_{\alpha(m)}$, the path $O$, some path in $R_{\alpha(1)}$, $O'$ and the edges $x_{m+1}y_{m+1}$ forms the desired path $\hat{S}_{m+1}$.
		This completes the construction.
	\end{claimproof}
	
	Let $\hat{S}$ be as in~\Cref{clm:special_out_ray}.
	Note that $\hat{S}$ intersects $R_{\alpha(n)}$ infinitely often for every $n \in \N$ by~\cref{itm:construction_ray_1n}.
	From now on we consider $D(\mathrm{A})$ and $D^{\infty}(\mathrm{A})$ with respect to the out-ray $\hat{S}$, where $\mathrm{A}:= \{\alpha(n): n \in \N \}$.
	
	Let $C$ be the strong component of $D^\infty(\mathrm{A})$ containing $\alpha(1)$.
	By~\cref{lem:thick_component} we can assume that $C$ is finite.
	Note that $\alpha(n) \alpha(n+1) \in E(D^\infty(\mathrm{A}))$ for every $n \in \N$ by construction of $\hat{S}$. Thus there exists no edge in $E(D^\infty(\mathrm{A}))$ with tail in $\mathrm{A} \setminus V(C)$ and head in $V(C)$.
	This implies that there exists a vertex $\alpha(\beta(1)) \in V(C)$ of infinite in-degree in $D(\mathrm{A})$ by \cref{lem:out-degree}.
	
	We construct recursively a subdivision of an ascending cyclic quarter-grid whose first vertical out-ray is $R_{\alpha(\beta(1))}$ and whose other vertical out-rays are in $(R_{\alpha(k)})_{k > \beta(1)}$.
	We assume that we fixed the vertical out-rays $R_{\alpha(\beta(1))}, \dots, R_{\alpha(\beta(m))}$ and the girders $G_2, \dots, G_m$ of the desired subdivision of a cyclic quarter-grid.
	
	All but finitely many of the staircases $(P_n)_{n \in \N}$, as defined in~\Cref{clm:special_out_ray}, avoid the set $\bigcup_{j \in [m] \setminus \{1\}} \Down{G_j}_\N$.
	Thus there exists $\beta(m+1)>\beta(m)$ such that $\alpha(\beta(m+1)) \in \mathrm{N}_{D(\mathrm{A})}^{\text{in}}(\alpha(\beta(1)))$ and $P_{\alpha(\beta(m+1))}$, $R_{\alpha(\beta(m+1))}$ avoid $\bigcup_{j \in [m] \setminus \{1\}} \Down{G_j}_\N$.
	
	It remains to construct the girder $G_{m+1}$.
	Let $Q$ be some $R_{\alpha(\beta(m+1))}$--$R_{\alpha(\beta(1))}$~path in $\hat{S}$ that is internally disjoint to $\bigcup_{i \in \mathrm{A}} R_i$, which exists since $\alpha(\beta(m+1)) \alpha(\beta(1)) \in E(D(\mathrm{A}))$.
	Since $Q$ starts in $R_{\alpha(\beta(m+1))}$, $Q \subseteq x_{\beta(m+1)} \hat{S}$ by~\cref{itm:construction_ray_3n}.
	Thus $Q \subseteq y_{\beta(m+1)} \hat{S}$ as $x_{\beta(m+1)} y_{\beta(m+1)} \in E(R_{\alpha(\beta(m+1))})$.
	This implies $Q$ avoids $\Down{P_{\alpha(\beta(m+1))}}_\N \subseteq \Down{\hat{S} x_{\alpha(\beta(m+1))}}_\N$ by~\cref{itm:construction_ray_2n} and since $P_{\alpha(\beta(m+1))} \subseteq \hat{S} x_{\alpha(\beta(m+1))}$.
	Thus the concatenation of the $ \alpha(\beta(1)), \alpha(\beta(1)) + 1, \dots, \alpha(\beta(m+1))$--staircase $P_{\alpha(\beta(m+1))}$, some path in $R_{\alpha(\beta(m+1))}$ and the path $Q$ forms the desired girder $G_{m+1}$.
	This finishes the construction of the subdivision of a cyclic quarter-grid whose vertical out-rays are in $(R_i)_{i \in I}$.
\end{proof}

\section{Suitable subfamilies}\label{sec:subfamilies}

Towards the proof of~\cref{thm:main} we investigate in this section the structure of $D$ if there does not exist $J \subseteq I$ such that $D^\infty(J)$ contains an infinite strong component, an in-ray or an out-ray.
More precisely, we show in~\Cref{lem:out_structure,lem:in_structure} the existence of certain families of $\bigcup_{i \in I} R_i$--paths respecting a given linear order on $I$.

\begin{lemma}\label{lem:out_structure}
	Let $(R_n)_{n \in \N}$ be either a family of disjoint equivalent out-rays or a family of disjoint equivalent in-rays in a digraph $D$.
	Then there exists an infinite subset $I \subseteq \N$ such that
	\begin{itemize}
		\item $D^\infty(I)$ has an infinite strong component,
		\item $D^\infty(I)$ contains an out-ray or an in-ray, or
		\item there is a family $(P_{i,j})_{i,j \in I: i < j}$ of paths, where $P_{i,j}$ is a $R_i$--$R_j$~path that is internally disjoint to $\bigcup_{n \in I} R_n$ such that the elements of $(P_{i,j})_{j \in I \setminus [i]}$ are disjoint for every $i \in I$.
	\end{itemize}
\end{lemma}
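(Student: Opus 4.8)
The plan is to read the required trichotomy off the structure of $D^\infty(\N)$ by combining \cref{lem:out-degree} with K\"onig's lemma. \emph{Step 1 (getting the first two alternatives for free).} If $D^\infty(\N)$ has an infinite strong component $C$, then $I:=V(C)$ works: every edge of $D^\infty(\N)$ is witnessed by infinitely many disjoint $\bigcup_{n\in\N}R_n$-paths in $S$, and such paths --- being internally disjoint from \emph{every} ray --- also appear as single edges of $D(I)$, so $D^\infty(I)$ contains $D^\infty(\N)[V(C)]=C$ and hence has an infinite strong component. In the same way, if $D^\infty(\N)$ contains an out-ray or an in-ray, its vertex set is a suitable $I$ for the second alternative. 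So from here on I may assume $D^\infty(\N)$ has no infinite strong component and no out- or in-ray.

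\emph{Step 2 (producing one ``fan'').} I claim there is then an index $j^*$, an infinite set $T\subseteq\N$ with $\min T>j^*$, and pairwise disjoint $R_{j^*}$--$R_t$~paths $\pi_t$ $(t\in T)$, each internally disjoint from $\bigcup_{n\in\N}R_n$. To construct these, build finite sets $J_1\subseteq J_2\subseteq\cdots$ recursively: apply \cref{lem:out-degree} to $J_k$; if some $j\in J_k$ has infinitely many distinct out-neighbours in $D(\N)$, stop and set $j^*:=j$; otherwise choose an edge $u_kw_k$ of $D^\infty(\N)$ with $u_k\in J_k$ and $w_k\notin J_k$, and put $J_{k+1}:=J_k\cup\{w_k\}$. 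Should the recursion never stop, the chosen edges form an infinite out-forest on $\bigcup_kJ_k$ with finite root set $J_1$ (each $w_k$ has the unique parent $u_k$); since the recursion never stopped, no vertex of $\bigcup_kJ_k$ has infinitely many out-neighbours in $D(\N)$, so the forest is locally finite, and K\"onig's lemma provides an infinite directed path in it --- an out-ray in $D^\infty(\N)$, contradicting Step 1. So the recursion stops; each out-neighbour $t$ of $j^*$ gives, via $\sigma$, an $R_{j^*}$--$R_t$~path in $S$ internally disjoint from all rays, and for distinct $t$ these are disjoint since their endpoints lie on distinct rays. Discarding the out-neighbours $\le j^*$ leaves the desired $T$ and $(\pi_t)_{t\in T}$.

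\emph{Step 3 (iterating to the third alternative).} Finally, I would repeat Steps 1--2 inside a shrinking chain of infinite reservoirs $\N=T_0\supseteq T_1\supseteq\cdots$, carrying along a finite ``forbidden'' set $F_{a-1}:=\{j^*_1,\dots,j^*_{a-1}\}$ disjoint from $T_{a-1}$: at step $a$ I run the argument for the subfamily $(R_t)_{t\in T_{a-1}}$ with an auxiliary ray (via \cref{prop:existence_out_ray}) that additionally avoids the rays indexed by $F_{a-1}$. Either the auxiliary graph of this subfamily has an infinite strong component, an out-ray, or an in-ray --- in which case the first or second alternative of the lemma holds for a suitable $I$ and we are finished, by the reasoning of Step 1 --- or we obtain $j^*_a\in T_{a-1}$, an infinite $T_a\subseteq T_{a-1}$ with $\min T_a>j^*_a$, and pairwise disjoint $R_{j^*_a}$--$R_t$~paths $(t\in T_a)$ internally disjoint from $\bigcup_{n\in T_{a-1}\cup F_{a-1}}R_n$. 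Then $I:=\{j^*_1<j^*_2<\cdots\}$, with $P_{j^*_a,j^*_b}$ (for $a<b$) the fan path from $R_{j^*_a}$ to $R_{j^*_b}$ --- which exists because $j^*_b\in T_{b-1}\subseteq T_a$ --- satisfies the third alternative: for fixed $a$ the paths $(P_{j^*_a,j^*_b})_{b>a}$ are pairwise disjoint, and each is internally disjoint from $R_{j^*_c}$ for every $c$, since $j^*_c\in F_{a-1}$ if $c<a$ and $j^*_c\in T_{a-1}$ if $c\ge a$.

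\emph{The main obstacle} is exactly this last step's relativisation: one must re-run \cref{lem:out-degree} and the $D(\cdot)$, $D^\infty(\cdot)$ constructions while avoiding a prescribed finite set of rays, which hinges on finding an auxiliary ray that cycles through the subfamily and yet avoids the forbidden rays. If no such ray exists, then some forbidden ray meets infinitely many of the disjoint paths between two rays of the subfamily, and one should argue that this already forces an infinite strong component, i.e.\ the first alternative. The in-ray case is symmetric: \cref{lem:out-degree} also supplies the ``out-degree'' alternative for families of in-rays, as equivalence provides infinitely many disjoint paths in both directions.
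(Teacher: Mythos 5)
Your Steps 1 and 2 are sound, and the K\"onig-type argument in Step 2 --- growing an out-forest of $D^\infty(\N)$-edges on never-revisited vertices, so that an infinite branch is automatically an out-ray rather than merely a vertex of infinite in-degree --- is an attractive way to extract a single fan. The genuine gap is in Step 3, and it is exactly the one you flag yourself as ``the main obstacle'': to make the fan paths of stage $a$ internally disjoint from $R_{j^*_1},\dots,R_{j^*_{a-1}}$ you need an auxiliary out-ray for the subfamily $(R_t)_{t\in T_{a-1}}$ that avoids those rays entirely, and such a ray need not exist. Your fallback --- that its non-existence ``already forces an infinite strong component'' --- is asserted, not proved, and I do not believe it: if a forbidden ray meets infinitely many of the disjoint paths between two rays of the subfamily, all you recover is infinitely many disjoint paths into and out of that ray, which is nothing beyond the equivalence you already assumed. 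The resulting configuration is a hub with infinite in-degree in $D^\infty$ and infinite out-degree in the multigraph $D(\cdot)$, and the outgoing edges may all have finite multiplicity with pairwise distinct heads, so no infinite strong component (and no ray of $D^\infty$) need appear. Since the whole point of the lemma is the trichotomy, you cannot close Step 3 by deferring to an alternative that is not actually forced.

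The paper's proof is organised around precisely this difficulty, using two devices your proposal lacks. First, it never re-routes around previously chosen rays with a fresh auxiliary ray: in its recursion the earlier indices $j_1,\dots,j_{n-1}$ are retained inside the index set of the auxiliary multigraph $D(\{j_1,\dots,j_{n-1}\}\cup I_{n-1})$, so the $\sigma$-paths are internally disjoint from those rays by the definition of $D(\cdot)$, with no new ray needed. Second, when that recursion cannot be continued, the paper does not land in the infinite-strong-component alternative but in the case of a vertex $\ell$ of infinite in-degree in some $D^\infty(J)$; it then locates a second hub $k$ of infinite out-degree in $D(J')$ lying in the same finite strong component as $\ell$, and manufactures the entire family $(P_{i,j})$ by routing every path through the $\ell$--$k$ corridor via \cref{prop:zickzack} (the $J'$, $J''$ construction). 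That second half is the substantive content missing from your argument; without it, or a worked-out substitute, Step 3 does not go through.
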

For the proof of~\cref{lem:out_structure} we need the following property of digraphs:
\begin{proposition}\label{rem:almost_out_degree}
	Let $D$ be an infinite digraph such that all strong components of $D$ are finite and for all but finitely many strong components $C$ of $D$ there exists an edge with tail in $V(C)$ and head in $V(D) \setminus V(C)$.
	Then $D$ contains an in-ray, an out-ray or a vertex of infinite in-degree.
\end{proposition}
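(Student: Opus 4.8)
The plan is to reduce to the acyclic case by passing to the condensation of $D$. Let $\widehat{D}$ be the digraph whose vertices are the strong components of $D$, with an edge from $C$ to $C'$ precisely when some edge of $D$ has tail in $V(C)$ and head in $V(C')$; this is an acyclic digraph, it is infinite because its vertex set partitions the infinite set $V(D)$ into finite parts, and by hypothesis all but finitely many of its vertices have out-degree at least one, i.e.\ $\widehat{D}$ has only finitely many sinks. Each of the three possible conclusions transfers back: an out-ray or an in-ray of $\widehat{D}$ lifts to one of $D$ by traversing the $D$-edges witnessing its edges and connecting consecutive ones through the relevant strong component using that it is finite and strongly connected (the concatenation is a genuine ray since pairwise distinct components are involved); and a vertex of $\widehat{D}$ of infinite in-degree is a strong component $C$ receiving $D$-edges from infinitely many distinct other components, so by the pigeonhole principle applied to the finite set $V(C)$ some vertex of $C$ has infinite in-degree in $D$. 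Hence it suffices to prove the proposition when $D$ is an infinite acyclic digraph with finitely many sinks.

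So assume $D$ is acyclic with finitely many sinks, and suppose $D$ has neither an out-ray nor a vertex of infinite in-degree; I will exhibit an in-ray. First, every vertex of $D$ can reach a sink: starting anywhere, repeatedly move to an out-neighbour; since $D$ is acyclic such a walk never repeats a vertex, so if it never terminated it would be an out-ray, a contradiction, and therefore it terminates at a sink. Writing $Z$ for the finite set of sinks and $A_w$ for the set of vertices from which $w$ is reachable, we get $V(D)=\bigcup_{z\in Z}A_z$, and since $V(D)$ is infinite while $Z$ is finite, $A_z$ is infinite for some $z\in Z$.

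Now build the in-ray recursively from this $z$. Suppose we have reached a vertex $w$ with $A_w$ infinite. Then $w$ is not a source, since otherwise $A_w=\{w\}$; and $w$ has only finitely many in-neighbours because its in-degree is finite. Every vertex of $A_w\setminus\{w\}$ lies in $A_u$ for some in-neighbour $u$ of $w$ — take the penultimate vertex of a path to $w$ — so, as $A_w\setminus\{w\}$ is infinite, some in-neighbour $u$ of $w$ has $A_u$ infinite, and we move to $u$. This produces a sequence $z=w_0,w_1,w_2,\dots$ with $w_{k+1}w_k\in E(D)$ for every $k$; acyclicity forces the $w_k$ to be pairwise distinct, so $\dots\to w_2\to w_1\to w_0$ is an in-ray, completing the proof.

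The argument is short and I do not expect a serious obstacle. The two points that need a little care are the lifting from the condensation — choosing the connecting subpaths inside each strong component and checking the concatenation is a ray — and the final recursion, where acyclicity of the condensation is exactly what guarantees that the backward path just constructed does not revisit a vertex.
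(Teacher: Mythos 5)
Your proof is correct and rests on the same two ingredients as the paper's: a backward K\H{o}nig-type argument from a vertex (or component) with infinite backward reachability, which yields either an in-ray or a vertex of infinite in-degree, and a greedy forward construction of an out-ray that repeatedly exits the current strong component, using that only finitely many components lack an outgoing edge. Passing to the condensation first and running the case split contrapositively (no out-ray $\Rightarrow$ every vertex reaches one of the finitely many sinks $\Rightarrow$ some sink is reachable from infinitely many vertices) is a clean repackaging, but it is essentially the paper's argument.
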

\begin{proof}
	If there exists a vertex $v \in V(D)$ that can be reached by infinitely many vertices, then we can construct recursively an infinite in-arborescence $T$ with root $v$.
	The in-arborescence $T$ contains either an in-ray or a vertex of infinite in-degree.
	Thus we can assume that each vertex of $D$ can only be reached by finitely many vertices.
	
	Let $\mathcal{C}$ be the (finite) set of (finite) strong components $C$ for which there does not exist an edge with tail in $V(C)$ and head in $V(D) \setminus V(C)$.
	Further let $x$ be some vertex of $D$ that cannot reach a vertex of $\bigcup \mathcal{C}$.
	
	We construct an out-ray in $D$ by building an increasing sequence of paths $(P_n)_{n \in \N}$ starting in $x$ such that $P_n$ intersects the strong component containing its final vertex only in one vertex.
	Set $P_1:= \{x\}$.
	We assume that $P_n$ has been defined for some $n \in \N$.
	Let $C$ be the strong component containing the final vertex $x_n$ of $P_n$.
	By assumption $C \notin \mathcal{C}$.
	Thus there exists an edge $yz$ with tail in $V(C)$ and head in $V(D) \setminus V(C)$.
	Let $P_{n+1}:= P_n x_n Q yz$, where $Q$ is some $x_n$--$y$~path in $C$.
	Note that $z \notin V(P_n)$ since $C$ is a strong component of $D$.
	This completes the construction of the out-ray $\bigcup_{n \in \N} P_n$ in $D$ and finishes the proof.
\end{proof}

\begin{proof}[Proof of~\cref{lem:out_structure}]
	We assume that the digraph $D^\infty(I)$ does not contain an out-ray, an in-ray nor an infinite strong component for every infinite subset $I \subseteq \N$.
	
	\begin{claim} \label{clm:thick_in_degree_or_paths}
		There exists an infinite subset $J \subseteq \N$ such that
		\begin{itemize}
			\item $D^\infty(J)$ contains a vertex of infinite in-degree, or
			\item $D$ contains a family $(P_{i,j})_{i, j \in J: i < j}$ of paths, where $P_{i,j}$ is an $R_i$--$R_j$~path that is internally disjoint to $\bigcup_{n \in J} R_n$, such that the elements of $(P_{i,j})_{j \in J \setminus [i]}$ are disjoint for every $i \in J$.
		\end{itemize}
	\end{claim}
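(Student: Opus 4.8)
The plan is to feed the auxiliary digraph $D^\infty(\N)$ into \cref{rem:almost_out_degree}. By our standing assumption that $D^\infty(I)$ contains neither an infinite strong component nor an in-ray nor an out-ray for any infinite $I\subseteq\N$, every strong component of $D^\infty(\N)$ is finite and $D^\infty(\N)$ has no in-ray and no out-ray. Hence \cref{rem:almost_out_degree} leaves exactly two cases. Either $D^\infty(\N)$ has a vertex of infinite in-degree, in which case $J:=\N$ already witnesses the first bullet; or the hypothesis of \cref{rem:almost_out_degree} must fail, i.e.\ there are infinitely many strong components $C$ of $D^\infty(\N)$ with no edge having tail in $V(C)$ and head in $\N\setminus V(C)$. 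Call such a $C$ a \emph{sink}; the sinks then form an antichain in the reachability order of $D^\infty(\N)$ (no edge leaves a sink, so no sink reaches another).

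Next I would turn the sinks into ``spreading'' rays. Each sink $C$ is a finite vertex set out of which no edge of $D^\infty(\N)$ leaves, so \cref{lem:out-degree} applied to the finite set $V(C)$ yields a vertex $v_C\in V(C)$ of infinite out-degree in $D(\N)$. Translating back through $\sigma$, and using that the images under $\sigma$ of distinct edges of $D(\N)$ sharing a tail are disjoint, this says that $R_{v_C}$ is joined inside $S$ to infinitely many pairwise distinct rays by pairwise disjoint paths, each internally disjoint from $\bigcup_{n\in\N}R_n$. Choosing one such vertex in each sink yields an infinite set $K\subseteq\N$ every ray of which has this rich forward fan.

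From here I would construct $J$ and the family $(P_{i,j})_{i<j\in J}$ together by recursion, picking the elements $j_1<j_2<\cdots$ of $J$ inside $K$ one at a time. Having fixed $j_1<\cdots<j_m$ and the finitely many paths $(P_{j_a,j_b})_{a<b\le m}$, I would choose $j_{m+1}\in K$ so large that $R_{j_{m+1}}$ avoids the (finitely many, hence altogether finite) interiors of the paths constructed so far, and then, for each $a\le m$, select $P_{j_a,j_{m+1}}$ to be an $R_{j_a}$--$R_{j_{m+1}}$ path internally disjoint from the rays already put into $J$ and disjoint from $P_{j_a,j_b}$ for all $b\le m$. Because every ray added at a later stage is again chosen to dodge the interiors of all previously built paths, in the limit the whole family is internally disjoint from $\bigcup_{n\in J}R_n$, and the disjointness of $(P_{i,j})_{j\in J\setminus[i]}$ for each fixed $i$ holds by construction.

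The step I expect to be the real obstacle is producing, at each stage, the path $P_{j_a,j_{m+1}}$ that is internally disjoint from the other rays $R_{j_c}$ with $c\notin\{a,m+1\}$ already in $J$: equivalence of the rays supplies only infinitely many pairwise disjoint $R_{j_a}$--$R_{j_{m+1}}$ paths in $D$, which need not avoid the interiors of the other rays of $J$, and the forward fans of different vertices of $K$ need not intersect, so one cannot simply route along $S$ either. This is precisely the point at which the standing assumption has to be used in full force: the absence of an infinite strong component, of an in-ray and of an out-ray in every $D^\infty(I)$ should rule out the configurations obstructing such a routing and let one pass to a further infinite subfamily of $K$ for which a suitable $j_{m+1}$ — reachable from all of $R_{j_1},\dots,R_{j_m}$ along an internally disjoint path — can always be found; and if even that failed, the obstruction itself would have to exhibit a vertex of infinite in-degree in some $D^\infty(J)$, delivering the first bullet instead.
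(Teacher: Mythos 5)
There is a genuine gap, and it sits exactly where you flag it yourself. Your opening move (one application of \cref{rem:almost_out_degree} to $D^\infty(\N)$, then \cref{lem:out-degree} applied to each sink component to harvest an infinite set $K$ of vertices of infinite out-degree in $D(\N)$) is sound, but it does not give you enough to run the recursion: knowing that each $v\in K$ has an infinite out-fan in $D(\N)$ says nothing about whether those fans have infinite pairwise intersections, or even meet $K$ at all. They could be pairwise disjoint, in which case no choice of $j_{m+1}$ is simultaneously an out-neighbour of $j_1,\dots,j_m$, and your construction of the paths $P_{j_a,j_{m+1}}$ stalls at $m=2$. Your closing sentence — that the standing assumption ``should rule out the configurations obstructing such a routing'' — is precisely the statement that needs proof, and it is not a soft technicality: it is the entire content of the claim.

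The paper closes this gap by interleaving the dichotomy with the choice of the set itself. One builds a decreasing chain of infinite sets $\N=I_0\supseteq I_1\supseteq I_2\supseteq\cdots$ together with the vertices $j_n$, where at stage $n$ one applies \cref{lem:out-degree} and \cref{rem:almost_out_degree} not to $D(\N)$ but to the \emph{restricted} auxiliary digraph $D(\{j_1,\dots,j_{n-1}\}\cup I_{n-1})$; the standing assumption (no infinite strong component, no in-ray, no out-ray in $D^\infty(I)$ for \emph{any} infinite $I$, plus no vertex of infinite in-degree) then forces some $j_n\in I_{n-1}$ to have infinite out-degree in that restricted digraph, and one sets $I_n:=I_{n-1}\cap \mathrm{N}^{\mathrm{out}}_{D(\{j_1,\dots,j_{n-1}\}\cup I_{n-1})}(j_n)$. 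The nesting guarantees $j_m\in \mathrm{N}^{\mathrm{out}}_{D(J)}(j_n)$ for all $m>n$ with $J=\{j_n:n\in\N\}$, so the required paths are simply the $\sigma$-images of these edges: subpaths of $S$, hence automatically internally disjoint from all the rays and pairwise disjoint when sharing their starting ray. A single application of the dichotomy to $D(\N)$, as in your proposal, cannot produce this nesting; the repeated application to ever-smaller index sets is the missing idea.
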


	\begin{claimproof}
	We assume that $D^\infty(J)$ does not contain a vertex of infinite in-degree for every infinite subset $J \subseteq I$.
	We begin by constructing recursively a strictly increasing sequence $(j_0, j_1, j_2, j_3, \dots)$ of natural numbers and a decreasing sequence $\N \supseteq I_1 \supseteq I_2 \supseteq I_3 \supseteq \dots$ of infinite sets such that for every $n \in \N$
	\begin{itemize}
		\item $j_1, \dots, j_n \notin I_n$,
		\item $j_{n+1} \in I_n$, and
		\item $I_n \subseteq \mathrm{N}_{D(K_{n-1})}^{\text{out}}(j_n)$,
	\end{itemize}
	where $K_{n-1}:= \{j_1, \dots, j_{n-1}\} \cup I_{n-1}$.
	
	Set $j_0:= 0$ and $I_0:= \N$.
	We assume that $j_1, \dots, j_{n-1}$ and $I_{n -1}$ have been defined for some $n \in \N$.
	If all elements of $I_{n-1} \setminus [j_{n-1}]$ have finite out-degree in $D(K_{n-1})$, then for every (finite) strong component $C$ of $D^\infty(K_{n-1})$ that avoids $[j_{n-1}]$ there exists an edge in $D^\infty(K_{n-1})$ with tail in $V(C)$ and head in $(K_{n-1}) \setminus V(C)$ by~\cref{lem:out-degree}.
	By~\cref{rem:almost_out_degree}, $D^\infty(K_{n-1})$ contains an in-ray, an out-ray or a vertex of infinite in-degree, contradicting our assumptions.
	
	Thus there exists $j_n \in I_{n-1} \setminus [j_{n-1}]$ that has infinite out-degree in $D(K_{n-1})$.
	We set $I_{n}:= I_{n-1} \cap \mathrm{N}_{D(K_{n-1})}^{\text{out}}(j_n)$.
	Then $j_n$ and $I_n$ are as desired, which completes the recursion.
	
	Finally, we set $J:= \{j_n : n \in \N \}$.
	Note that for every $n < m \in \N$ we have $j_m \in I_{m-1} \cap J \subseteq I_n \cap J \subseteq \mathrm{N}_{D(K_{n-1})}^{\text{out}}(j_n) \cap J \subseteq \mathrm{N}_{D(J)}^{\text{out}}(j_n)$.
	Thus there exists an $R_{j_n}$--$R_{j_m}$~path $P_{j_n,j_m}$ in $S$ that is internally disjoint to $\bigcup_{i \in J} R_i$.
	Since each path $(P_{i,j})_{i,j \in J: i < j}$ is a subpath of $S$, the elements of $(P_{i,j})_{j \in J \setminus [i]}$ are disjoint for every $i \in J$.
	Thus $(P_{i,j})_{i,j \in J: i<j}$ is indeed the desired family of paths.
\end{claimproof}

	By~\Cref{clm:thick_in_degree_or_paths}, we can assume that there is $J \subseteq \N$ such that $D^\infty(J)$ contains a vertex $\ell$ of infinite in-degree.
	We consider $J':= \{\ell\} \cup N_{D^\infty(J)}^{\text{in}}(\ell)$.
	Let $C$ be the (finite) strong component of $D^\infty(J')$ containing $\ell$.
	Note that there is no edge in $D^\infty(J')$ with tail in $V(C)$ and head in $J' \setminus V(C)$, since every element of $J' \setminus \{\ell\}$ can reach $C$ in $D^\infty(J')$.
	Thus there exists a vertex $k \in V(C)$ of infinite out-degree in $D(J')$ by~\cref{lem:out-degree}.
	
	We set $J'':= N_{D(J')}^{\text{out}}(k) \setminus V(C)$.
	We remark that for every $n \in J''$ we have $n \ell \in E(D^\infty(V(C) \cup J''))$ and $k n \in E(D(V(C) \cup J''))$ and further that there exists an $\ell$--$k$~path in $C \subseteq D^\infty(V(C) \cup J'')$.
	See~\cref{fig:construction_path}.
	
	\begin{center}
		\begin{figure}[ht]
			\begin{tikzpicture}
				\draw[LavenderMagenta] (1,2) ellipse (2 cm and 1 cm);
				\draw[LavenderMagenta] (-2,2) node {$C$};
				\draw[] (-2,0) node {$J''$};
				
				\filldraw (0,2) circle (0.5pt);
				\filldraw (2,2) circle (0.5pt);
				\draw (0,2.5) node {$\ell$};
				\draw (2,2.5) node {$k$};
				\foreach \x in {-1,-0.5,...,2}{
					\filldraw (\x,0) circle (0.5pt);
				}
				\foreach \x in {-1,-0.5,...,2}{
					\draw[edge, thick, shorten <=5pt, shorten >=5pt] (\x,0) to (0,2);
					\draw[edge, dashed, shorten <=5pt, shorten >=5pt] (2,2) to (\x,0);
				}
				
				\filldraw[opacity=0.5,thick] (2.5,0) circle (0.5pt);
				\draw[edge, opacity=0.5, thick, shorten <=5pt, shorten >=5pt] (2.5,0) to (0,2);
				\draw[edge, opacity=0.5, dashed, shorten <=5pt, shorten >=5pt] (2,2) to (2.5,0);
				
				\draw[->, > = latex', thick, decorate, decoration={snake, amplitude=1mm, segment length=5mm}] (0.1,2) -- (1.9,2);
			\end{tikzpicture}
		\caption{Edges of $D(V(C) \cup J'')$ (dashed) and $D^\infty(V(C) \cup J'')$ (thick) in the proof of~\cref{lem:out_structure}.}
			\label{fig:construction_path}
		\end{figure}
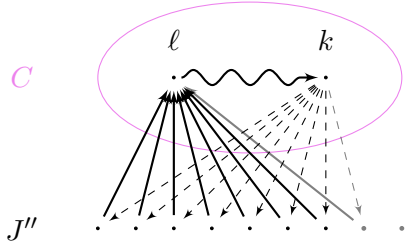
	\end{center}
	
	\begin{claim}
		The digraph $D$ contains a family $(P_{i,j})_{i,j \in I: i < j}$ of paths, where $P_{i,j}$ is a $R_i$--$R_j$~path that is internally disjoint to $\bigcup_{n \in I} R_n$, such that the elements of $(P_{i,j})_{j \in I \setminus [i]}$ are disjoint for every $i \in I$.
	\end{claim}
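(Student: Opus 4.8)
Throughout, write $u_j$ and $u_j'$ for the first and last vertex of the path $\sigma(kj)$, so $u_j\in V(R_k)$ and $u_j'\in V(R_j)$, and fix the $\ell$--$k$~path $\ell=c_0,c_1,\dots,c_t=k$ inside $C$ (with $t=0$ if $\ell=k$); we treat out-rays, the in-ray case being symmetric. The plan is to choose an infinite $I\subseteq J''$ and, for $i<j$ in $I$, to realise $P_{i,j}$ as a concatenation
\[
R_i\longrightarrow R_\ell\longrightarrow R_{c_1}\longrightarrow\cdots\longrightarrow R_{c_{t-1}}\longrightarrow R_k\longrightarrow u_j\xrightarrow{\ \sigma(kj)\ }R_j ,
\]
where the leg $R_i\to R_\ell$ is one of the infinitely many pairwise disjoint $R_i$--$R_\ell$~paths furnished by $i\ell\in E(D^\infty(V(C)\cup J''))$, each leg $R_{c_s}\to R_{c_{s+1}}$ is one of the infinitely many pairwise disjoint paths furnished by $c_sc_{s+1}\in E(D^\infty(V(C)\cup J''))$, and between consecutive legs---and before $\sigma(kj)$---we splice a forward segment of $R_{c_s}$, resp.\ of $R_k$. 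Since $\ell,c_1,\dots,c_{t-1},k$ all lie in $V(C)$ while $I\subseteq J''$ is disjoint from $V(C)$, any such $P_{i,j}$ is automatically an $R_i$--$R_j$~path internally disjoint to $(R_n)_{n\in I}$, so all the work lies in arranging, for each fixed $i$, that the paths $P_{i,j}$ with $j\in I$, $j>i$, are pairwise disjoint.

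The key tool is that the out-ray $S$ of \cref{prop:existence_out_ray} runs \emph{monotonically} along every ray: since in its construction each $Q_{n+1}$ avoids $\Down{V(P_n)}_\N$, an easy induction on the construction gives $u\le_S v\iff u\le_{R_m}v$ for all $u,v\in V(S)\cap V(R_m)$ and all $m$. Now all the legs above are subpaths of $S$ (they are, via $\sigma$, edges of the relevant $D(\cdot)$), so each occupies an interval of $S$; monotonicity makes the splicing segments of $R_{c_s}$ and $R_k$ genuinely forward, and ensures that such a path has all of its $S$-vertices inside a single interval of $S$ and all of its remaining vertices on forward segments of $R_\ell,R_{c_1},\dots,R_{c_{t-1}},R_k$ spanned by that interval. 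Since each relevant family (the $R_i$--$R_\ell$~paths for a given $i\in J''$, and the $R_{c_s}$--$R_{c_{s+1}}$~paths for a given $s$) is infinite, its members occupy cofinally many pairwise disjoint intervals of $S$; hence for every $i\in J''$ and every vertex $x$ of $S$ a finite greedy search produces an $R_i$--$R_k$~path $Q_i(x)$, internally disjoint to $(R_n)_{n\in V(C)\cup J''}$, all of whose $S$-vertices lie in some interval $[x,\psi_i(x)]$ of $S$ with $x<_S\psi_i(x)\in V(S)\cap V(R_k)$ and whose $R_k$-endpoint is $\psi_i(x)$. By contrast, the paths $\sigma(kj)$ for $j\in J''$ are pairwise disjoint and occupy cofinally many intervals of $S$, but are not under our control---this asymmetry is the crux, and it is what forces $I$ to be sparse.

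Concretely, I would build $I=\{\,j^{(1)}<j^{(2)}<\cdots\,\}\subseteq J''$ recursively: given $j^{(1)},\dots,j^{(m-1)}$, pick $j^{(m)}\in J''$ with $j^{(m)}>j^{(m-1)}$ and with $u_{j^{(m)}}$ lying at or after, in $\le_S$, both $\psi_{j^{(m-1)}}(r_0)$---$r_0$ the root of $S$---and $\psi_{j^{(s)}}(w^+)$ for $s=1,\dots,m-2$, where $w=u'_{j^{(m-1)}}$ and $w^+$ is its $S$-successor. This is possible because $j^{(m)}$ will lie in only the $m-1$ fans rooted at $j^{(1)},\dots,j^{(m-1)}$, and $\{\,u_j:j\in J'',\ j>j^{(m-1)}\,\}$ is cofinal in $S$. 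For each $r$, build the fan at $j^{(r)}$ by putting $P_{j^{(r)},j^{(r+1)}}$ on the initial interval of $S$ up to $u'_{j^{(r+1)}}$ and, for $m>r+1$, putting $P_{j^{(r)},j^{(m)}}$ on the interval $\big((u'_{j^{(m-1)}})^+,\,u'_{j^{(m)}}\big]$ of $S$: route $Q_{j^{(r)}}$ from the left end of this interval, which by the choice of $j^{(m)}$ reaches $R_k$ at a vertex $\le_S u_{j^{(m)}}$; a forward segment of $R_k$ then continues to $u_{j^{(m)}}$, then $\sigma(kj^{(m)})$ reaches $R_{j^{(m)}}$, and every vertex of $P_{j^{(r)},j^{(m)}}$ lies in that interval (its $S$-vertices) or on forward segments of $R_\ell,\dots,R_k$ spanned by it. As these intervals are pairwise disjoint for fixed $r$, and $R_\ell,\dots,R_k$ are monotone on $S$, the paths of a fixed fan are pairwise disjoint; paths of different fans may meet, which the statement permits. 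The step I expect to be the main obstacle is exactly this balancing act---recognising that the adjacencies into $\ell$ and inside $C$ have infinite multiplicity whereas the adjacencies $kj$ supply only a single path each, and then using the monotonicity of $S$ together with a sufficiently sparse $I$ so that between consecutive paths $\sigma(kj)$ along $S$ there is always room to fit the $C$-detours of all of the (finitely many, at any stage) earlier fans.
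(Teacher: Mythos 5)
Your skeleton is the same as the paper's: every $P_{i,j}$ is routed from $R_i$ into $R_\ell$, through the finite strong component $C$ to $R_k$ via legs of infinite multiplicity, and then out to $R_j$ via a single $R_k$--$R_j$ path coming from the infinite out-degree of $k$; the only genuine work is the disjointness within each fan. But the mechanism you use for that disjointness rests on a claim that is not justified and is in general false: that the out-ray $S$ of \cref{prop:existence_out_ray} meets every ray monotonically, i.e.\ $u\le_S v\iff u\le_{R_m}v$ for all $u,v\in V(S)\cap V(R_m)$. The avoidance of $\Down{V(P_n)}_I$ in that construction only forces the visits of $Q_{n+1}$ to each $R_m$ to come \emph{after} all visits of $P_n$; it says nothing about the order of the visits \emph{inside} a single connecting path $Q_{n+1}$. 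Such a path is merely an $R_{\alpha(n)}$--$R_{\alpha(n+1)}$ path in $D$ and may traverse an intermediate ray $R_m$ at several vertices in an order disagreeing with $\le_{R_m}$; each of these visits becomes an endpoint of a $\sigma$-path, so your ``intervals of $S$'' need not span intervals of the rays. Since both halves of your argument lean on this --- the forward splicing along $R_\ell,R_{c_1},\dots,R_k$, and above all the conclusion that disjoint $S$-intervals yield disjoint ray-segments and hence pairwise disjoint paths within a fan --- the disjointness you need is not established.

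The forward-splicing half is easily repaired (each infinite family of pairwise disjoint legs has only finitely many members starting before any given position on the relevant ray, so one can demand ``late on $S$'' and ``late on $R$'' simultaneously), but the fan-disjointness half is not: once monotonicity is dropped, you must explicitly make each new path avoid the finitely many ray-segments already used by its fan, at which point the interval bookkeeping is doing no work and you are back to a plain recursion with avoidance. That is exactly what the paper does: it adds one new target $x$ at a time, takes for each existing $i$ a staircase $Q_i$ through $C$ to $R_k$ avoiding $\Down{\,\cdot\,}_\N$ of everything built so far (\cref{prop:zickzack}), then one fresh $R_k$--$R_x$ path $O$ avoiding the same (possible because the infinitely many out-neighbours of $k$ in $J''$ give infinitely many pairwise disjoint candidate subpaths of $S$), and concatenates --- no global ordering of $S$ is ever needed. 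Your observation that the $C$-adjacencies have infinite multiplicity while each $kj$ supplies essentially one path, and that paths of different fans may share the common $R_k$--$R_x$ tail, is correct and matches the paper; the gap is solely the unjustified monotonicity of $S$.
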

	
	\begin{claimproof}
	We construct recursively a strictly increasing sequence $(I_m)_{m \in \N}$ of subsets of $J''$ and the desired family $(P_{i,j})_{i,j \in I: i < j}$ of paths, where $I:= \bigcup_{m \in \N} I_m$.
	We assume that we fixed a finite subset $I_m \subseteq J''$ and constructed a family $(P_{i,j})_{i,j \in I_m: i < j}$ of paths.
	
	\begin{description}
		\item[If $(R_n)_{n \in V(C)}$ is a family of out-rays]
			Firstly, we pick, for every $i \in I_m$, an $R_i$--$R_k$~path $Q_i$ that is disjoint from $\bigcup_{i, j \in I_m: i < j} \Down{P_{i,j}}_\N$ and internally disjoint from $\bigcup_{j \in J''} R_j$.
			Such paths exist by~\cref{prop:zickzack} since there is an $i$--$k$~path in $D^\infty(V(C) \cup J'')$ that is internally disjoint to $J''$.
			
			As $k$ has infinite out-degree in $D(V(C) \cup J'')$, there exists $x \in J'' \setminus \bigcup_{i \in I_m} [i]$ such that there is an $R_k$--$R_x$~path $O$ that is disjoint from $\bigcup_{i, j \in I_m: i < j} \Down{P_{i,j}}_\N \cup \bigcup_{i \in I_m}\Down{Q_i}_\N$ and internally disjoint from $\bigcup_{j \in J''} R_j$.
			Then $Q_i \cup R_k \cup O$ contains an $R_i$--$R_x$~path $P_{i,x}$ for every $i \in I_m$.
			Note that $P_{i,x}$ is disjoint from $\bigcup_{i,j \in I_m: i < j} P_{i,j}$ and internally disjoint from $\bigcup_{j \in J''} R_j$.
		
		\item[If $(R_n)_{n \in V(C)}$ is a family of in-rays]
			Firstly, as $k$ has infinite out-degree in $D(V(C) \cup J'')$, there exists $x \in J'' \setminus (\bigcup_{i \in I_m} [i])$ such that there is an $R_k$--$R_x$~path $O$ that is disjoint from $\bigcup_{i, j \in I_m: i < j} \Down{P_{i,j}}_\N$ and internally disjoint from $\bigcup_{j \in V(C) \cup J''} R_j$.
			
			Secondly, we pick, for every $i \in I_m$, an $R_i$--$R_k$~path $Q_i$ that is disjoint from $\bigcup_{i, j \in I_m: i < j} \Down{P_{i,j}}_\N \cup \Down{O}_\N$ and internally disjoint from $\bigcup_{j \in J''} R_j$.
			Such paths exist by~\cref{prop:zickzack} since there is an $i$--$k$~path in $D^\infty(V(C) \cup J'')$ that is internally disjoint to $J''$.
			Then $Q_i \cup R_k \cup O$ contains an $R_i$--$R_x$~path $P_{i,x}$ for every $i \in I_m$.
			Note that $P_{i,x}$ is disjoint from $\bigcup_{i, j \in I_m: i < j} P_{i,j}$ and internally disjoint from $\bigcup_{j \in J''} R_j$.
	\end{description}
	\noindent
	We set $I_{m+1}:= I_m \cup \{x\}$ and continue the recursion.
	Note that the paths $P_{i,j}$ and $P_{i,j'}$ are disjoint by construction for every $i,j,j' \in I$ with $i < j,j'$ and $j \neq j'$.
		\end{claimproof}
	
	This completes the proof.
\end{proof}

By applying~\cref{lem:out_structure} to $\Dv$ we obtain:

\begin{lemma}\label{lem:in_structure}
	Let $(R_n)_{n \in \N}$ be a family of disjoint equivalent out-rays or a family of disjoint equivalent in-rays in a digraph $D$.
	Then there exists an infinite subset $I \subseteq \N$ such that
	\begin{itemize}
		\item $D^\infty(I)$ has an infinite strong component,
		\item $D^\infty(I)$ contains an in-ray or an out-ray, or
		\item there is a family $(Q_{i,j})_{i, j \in I: i > j}$ of paths, where $Q_{i,j}$ starts in $R_i$, ends in $R_j$ and is internally disjoint to $(R_n)_{n \in I} $, such that $Q_{i,k} \cap Q_{j, k} = \emptyset$ for every $i, j, k \in I$ with $i, j > k$ and $i \neq j$. \qed
	\end{itemize}
\end{lemma}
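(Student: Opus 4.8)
The plan is to deduce this lemma directly from \cref{lem:out_structure} applied to the reverse digraph $\Dv$. Reversing every edge of $D$ interchanges out-rays and in-rays, so $(R_n)_{n\in\N}$ is again a family of disjoint equivalent out-rays or of disjoint equivalent in-rays when read inside $\Dv$, and hence is a legitimate input for \cref{lem:out_structure} over $\Dv$. First I would take, as in \cref{sec:preliminiaries}, a ray $S$ of $D$ witnessing \cref{prop:existence_out_ray} for $(R_n)_{n\in\N}$; its reversal $\overleftarrow{S}$ is a ray of $\Dv$ with the analogous property, and I would use $\overleftarrow{S}$ as the auxiliary ray defining $\Dv(I)$ and $\Dv^\infty(I)$. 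Under the canonical correspondence $\sigma$, the $\bigcup_{i\in I}R_i$-paths in $S$ are exactly the reversals of those in $\overleftarrow{S}$, so $\Dv(I)$ is obtained from $D(I)$ by reversing all of its edges; since suppressing vertices of in- and out-degree $1$ and deleting loops commute with edge reversal, it follows that $\Dv^\infty(I)$ is the reversal of $D^\infty(I)$.

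Now \cref{lem:out_structure} yields an infinite set $I\subseteq\N$ satisfying one of its three conclusions, and I would translate each back to $D$. If $\Dv^\infty(I)$ has an infinite strong component, so does $D^\infty(I)$, as strong connectivity is invariant under edge reversal. If $\Dv^\infty(I)$ contains an out-ray or an in-ray, then $D^\infty(I)$ contains an in-ray or an out-ray, respectively. Finally, suppose \cref{lem:out_structure} produces a family $(P_{i,j})_{i<j\in I}$ where $P_{i,j}$ is an $R_i$--$R_j$ path in $\Dv$ internally disjoint to $(R_n)_{n\in I}$ and where $(P_{i,j})_{j\in I\setminus[i]}$ is a disjoint family for every $i\in I$. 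For $i>j$ in $I$ put $Q_{i,j}:=\overleftarrow{P_{j,i}}$, the reversal of $P_{j,i}$ read in $D$; then $Q_{i,j}$ starts in $R_i$, ends in $R_j$, and is internally disjoint to $(R_n)_{n\in I}$. For fixed $k\in I$, the paths $(Q_{i,k})_{i\in I\setminus[k]}$ are precisely the reversals of the pairwise disjoint family $(P_{k,i})_{i\in I\setminus[k]}$, so $Q_{i,k}\cap Q_{j,k}=\emptyset$ for all distinct $i,j>k$ in $I$, which is the third conclusion.

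The only point I would check with care is the bookkeeping around the auxiliary ray: since \cref{lem:out_structure} and the digraphs $D(I)$, $D^\infty(I)$ are defined relative to a fixed ray, one must ensure that the ray used for the application to $\Dv$ is the reversal of the one used for $D$ --- this is the compatibility established in the first paragraph. Apart from that, every outcome of \cref{lem:out_structure} is self-dual under edge reversal (an infinite strong component stays one; an out-ray becomes an in-ray and conversely; and the family of internally disjoint $R_i$--$R_j$ paths refers only to $D$ and the rays $R_i$), so no further argument beyond the three translations above is required.
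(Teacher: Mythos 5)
Your proposal is correct and takes exactly the paper's route: the paper proves this lemma by the one-line remark ``By applying Lemma~5.1 to $\Dv$ we obtain,'' and your write-up simply spells out the duality bookkeeping (reversed auxiliary ray, reversal of $D^\infty(I)$, and the re-indexing $Q_{i,j}=\overleftarrow{P_{j,i}}$ turning disjointness in the first index into disjointness in the second). All translations check out, so nothing further is needed.
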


Finally, we prove the existence of an out-ray avoiding fixed initial segments of rays in~$(R_i)_{i \in I}$.
\begin{lemma}\label{lem:finding_subfamily}
	Let $(R_i)_{i \in I}$ be an infinite family of disjoint equivalent out-rays in a digraph $D$.
	Further, let $v_i \in V(R_i)$ for every $i \in I$.
	Then there exist an infinite subset $J \subseteq I$ and an out-ray $\hat{S}$ in $D$ such that for every $j \in J$:
	\begin{itemize}
		\item $\hat{S}$ avoids the initial segment $R_j v_j$, and
		\item $\hat{S}$ intersects the out-ray $R_j$ infinitely often.
	\end{itemize}
\end{lemma}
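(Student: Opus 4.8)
The plan is to build $\hat{S}$ and $J$ simultaneously by a recursion in the spirit of the proof of~\cref{prop:existence_out_ray}, the point being that at each step one only has to avoid the finitely many initial segments $R_j v_j$ committed to so far. I will recursively produce a strictly increasing sequence of finite paths $P_1\subseteq P_2\subseteq\cdots$, a strictly increasing sequence of finite index sets $J_1\subseteq J_2\subseteq\cdots$, and a sequence $b_1,b_2,\dots$ with $b_n\in J_n$, subject to the following: $P_n$ ends in a vertex $x_n$ of $R_{b_n}$ lying strictly beyond $v_{b_n}$ on $R_{b_n}$; $P_n$ is obtained from $P_{n-1}$ by appending a path that starts at the last vertex of $P_{n-1}$ and is otherwise disjoint from $\Down{V(P_{n-1})}_I$ (so that $P_n$ is indeed a path); $P_n$ avoids $R_j v_j$ for every $j\in J_n$; and every index of $J:=\bigcup_n J_n$ occurs infinitely often in $(b_n)_{n\in\N}$. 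For the last requirement I use the diagonal schedule whose $k$-th block of steps first introduces a fresh index $j_k$ (visiting $R_{j_k}$) and then revisits $R_{j_{k-1}},\dots,R_{j_1}$ in turn; then $j_\ell$ is visited in every block $k\ge\ell$, hence infinitely often, while each block is finite, so $\hat{S}:=\bigcup_n P_n$ is an out-ray (note also that consecutive members of $(b_n)$ are distinct, so every routing step really goes between two distinct rays).

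For the base case pick any $j_1\in I$ and a vertex $x_1$ of $R_{j_1}$ lying strictly beyond $v_{j_1}$, and set $P_1:=\{x_1\}$, $J_1:=\{j_1\}$, $b_1:=j_1$. For the recursion step, suppose $P_n$, $J_n$, $b_n$ are given as above, and let $b:=b_{n+1}$ be dictated by the schedule. If $b$ is a fresh index, first choose it: only finitely many rays meet the finite path $P_n$, so there is $b\in I\setminus J_n$ with $V(R_b)\cap V(P_n)=\emptyset$, and we put $J_{n+1}:=J_n\cup\{b\}$; otherwise $b\in J_n$ and $J_{n+1}:=J_n$. Since the rays of $(R_i)_{i\in I}$ are equivalent there are infinitely many disjoint $R_{b_n}$--$R_b$~paths, so all but finitely many of them avoid the finite set $\Down{V(P_n)}_I\cup R_b v_b$; fix such a path $Q$. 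Then $Q$ starts on $R_{b_n}$ strictly beyond $x_n$ (as $R_{b_n}x_n\subseteq\Down{V(P_n)}_I$) and ends on $R_b$ strictly beyond $v_b$ (as $R_b v_b$ was forbidden). Let $O$ be the segment of $R_{b_n}$ from $x_n$ to the start of $Q$, and let $P_{n+1}$ be the concatenation of $P_n$, $O$ and $Q$. The new portion $O\cup Q$ is disjoint from $\Down{V(P_n)}_I$ except at $x_n$, and it meets $\bigcup_{j\in J_{n+1}}R_j v_j$ nowhere: $O\subseteq R_{b_n}$ lies past $v_{b_n}$, $Q$ avoids $R_b v_b$, and for every other $j\in J_n$ one already has $R_j v_j\subseteq\Down{V(P_n)}_I$ because $j$ was introduced at an earlier step at a vertex of $R_j$ past $v_j$.

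It remains to read off the conclusions. The set $J=\{j_k:k\in\N\}\subseteq I$ is infinite. Fix $j\in J$, introduced at some step $m$: then $P_{m-1}$ is disjoint from $R_j$, $P_m$ meets $R_j$ only past $v_j$, and every later extension was built to avoid $R_j v_j$, so $\hat{S}$ avoids $R_j v_j$. Moreover $R_j$ is visited infinitely often along the schedule, and the entry vertices on $R_j$ of the routing paths $Q$ at these visits are pairwise distinct because $\hat{S}$ is an out-ray, so $\hat{S}$ meets $R_j$ infinitely often. The step I expect to be the main obstacle — and the reason $J$ must be produced adaptively rather than fixed at the outset — is exactly the clash between the finite piece of $\hat{S}$ already built and the initial segment of an index newly added to $J$: one cannot reroute the past, so each fresh $j$ must be chosen to miss the current finite path $P_n$ (possible precisely because that path is finite), and thereafter steering every extension clear of $R_j v_j$ is only a finite constraint at each step, handled by adding $R_b v_b$ to the forbidden set. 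Apart from this bookkeeping, the argument is a routine adaptation of the proof of~\cref{prop:existence_out_ray}.
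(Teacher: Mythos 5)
Your proposal is correct and follows essentially the same strategy as the paper's proof: a greedy recursion that adaptively adds a fresh index whose ray (and hence whose initial segment) misses the finite path built so far, and in each round sweeps through all committed rays via connecting paths that avoid the finite forbidden set $\Down{V(P_n)}_I$ together with the newly forbidden initial segment. The only cosmetic difference is that the paper anchors each round at a fixed ray $R_x$ and visits all of $I_{m+1}$ in a single sweep, whereas you interleave the visits via a diagonal schedule; the invariants maintained are the same.
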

\begin{proof}
	We construct recursively a strictly increasing sequence $I_1 \subset I_2 \subset I_3 \subset \dots$ of finite subsets of $I$ and a strictly increasing sequence $P_1 \subset P_2 \subset P_3 \subset \dots$ of paths in $D$ starting in the same vertex such that for every $m \in \N$:
	\begin{itemize}
		\item $P_m$ avoids $R_i v_i$ for every $i \in I_m$,
		\item $P_{m+1} - P_{m}$ intersects $R_i$ for every $i \in I_m$, and
		\item the end vertex $t_m$ of $P_m$ is contained in $R_x$ such that $t_m R_x \cap P_m = \{t_m\}$,
	\end{itemize}
	where $x$ is a fixed element of $I_1$.
	Pick $x \in I$ and let $t_1$ be some vertex of $R_x - R_x v_x$.
	We set $I_1:=\{x\}$ and $P_1:=\{t_1\}$.
	We assume that $I_m$ and $P_m$ have been constructed.
	Since $V(P_m)$ is finite there exists $i \in I \setminus I_m$ such that $R_i v_i$ avoids $P_m$.
	We set $I_{m +1} := I_m \cup \{i\}$.
	
	\begin{claim}
		There exists a path $Q$ in $D$ such that
		\begin{itemize}
			\item the start vertex $s_{m+1}$ of $Q$ is contained in $R_x$,
			\item the end vertex $t_{m+1}$ of $Q$ is contained in $R_x$ such that $t_{m+1} R_x \cap Q = \{t_{m+1}\}$,
			\item $Q$ avoids $\Down{P_m}_{I} \cup \bigcup_{i \in I_{m+1}} V(R_i v_i)$, and
			\item $Q$ intersects $R_i$ for every $i \in I_{m+1}$,
		\end{itemize}
	\end{claim}
	\begin{claimproof}
		Let $i_1, \dots, i_k$ be a sequence of elements in $I_{m+1}$ such that each element of $I_{m+1}$ occurs at least once and such that $i_1 = i_k := x$.
		We pick recursively some $R_{i_\ell}$--$R_{i_{\ell +1}}$~path $Q_\ell$ that avoids $\Down{P_m}_{I} \cup \bigcup_{i \in I_{m+1}} V(R_i v_i)$ and $\bigcup_{j \in [\ell -1]} \Down{Q_j}_I$.
		Such a path exists since $R_{i_\ell}$ and $R_{i_{\ell +1}}$ are equivalent in $D$.
		Note that the end vertex of $Q_j$ precedes the start vertex of $Q_{j+1}$ in $R_{i_{j+1}}$ for every $j \in [k-2]$ and we set $Q_j'$ to be the path in $R_{i_{j+1}}$ starting in the end vertex of $Q_j$ and ending in the start vertex of $Q_{j+1}$.
		The concatenation of $Q_1, Q_1', Q_2, Q_2', \dots, Q_{k-2}, Q_{k-2}', Q_{k-1}$ is the desired path $Q$.
	\end{claimproof}
	\noindent
	Let $P_{m+1}$ be the concatenation of $P_m$, $t_m R_x s_{m+1}$, where $s_{m+1}$ is the start vertex of $Q$, and $Q$.
	This completes the recursion.
	Finally, we set $I:= \bigcup_{m \in \N} I_m$ and $\hat{S}:= \bigcup_{m \in \N} P_m$.
	Note that $\hat{S}$ avoids the initial segments $R_n v_n$ for every $n \in I$ and intersects every element of $(R_n)_{n \in I}$ infinitely often by construction.
\end{proof}

\section{The existence of quarter-grids}\label{sec:proof_main_theorem}
We combine the results of~\Cref{sec:component,sec:rays,sec:subfamilies} to prove:

\thmMain*
\begin{proof}
	We prove the statement for an infinite family $(R_i)_{i \in I}$ of out-rays.
	The corresponding statement for in-rays can be deduced by considering $\Dv$.
	We assume without loss of generality that $I = \N$.
	
	We apply \cref{lem:out_structure} to $(R_n)_{n \in \N}$, which provides an infinite subset $I_1 \subseteq \N$ as stated.
	If $D^\infty(I_1)$ contains an infinite strong component, an out-ray or an in-ray, we are done by \Cref{lem:thick_component,lem:in_rays,lem:out_rays}.
	Thus we can assume that there exists a family $(P_{i,j})_{i, j \in I_1: i < j}$ of paths, where $P_{i,j}$ starts in $R_i$, ends in $R_j$ and is internally disjoint to $\bigcup_{i \in I_1} R_i$, such that the paths $(P_{i,j})_{j \in I_1 \setminus [i]}$ are disjoint for every $i \in I_1$.
	
	For every $i \in I_1$ let $v(i)$ be an arbitrary vertex of $V(R_i) \setminus \bigcup_{i,j \in I_1: j < i} \Down{P_{j,i}}_\N$.
	We apply \cref{lem:finding_subfamily} to $(R_i)_{i \in I_1}$ and $(v(i))_{i \in I_1}$ to obtain an infinite subset $I_2 \subseteq I_1$ and an out-ray $\hat{S}$ that intersects every out-ray of $(R_i)_{i \in I_2}$ infinitely often and avoids all initial segments $(R_i v(i))_{i \in I_2}$.
	
	We apply~\cref{lem:in_structure} to the family $(v(i) R_i )_{i \in I_2}$ of equivalent out-rays in $\hat{D}:= \hat{S} \cup \bigcup_{i \in I_2} v(i) R_i$ to obtain an infinite subset $I_3 \subseteq I_2$ as stated.
	If $\hat{D}^\infty(I_3)$ contains an infinite strong component, an out-ray or an in-ray, then $\hat{D}$ contains a subdivision of a bidirected quarter-grid or a cyclic quarter-grid $D'$ in $\hat{D}$ whose vertical rays are elements of $(v(i) R_i )_{i \in I_3}$ by \Cref{lem:thick_component,lem:in_rays,lem:out_rays}.
	By adding some initial segments $(R_i v(i))_{i \in I_3}$ to $D'$ we obtain a subdivision of a bidirected quarter-grid or a cyclic quarter-grid in $D$ whose vertical rays are elements of $(R_i )_{i \in I_3}$.
	
	Thus we can assume that there exists a family $(Q_{i,j})_{i, j \in I_3: i > j}$ of paths in $\hat{D}$, where $Q_{i,j}$ starts in $v(i)R_i$, ends in $v(j)R_j$ and is internally disjoint to $(v(i)R_i)_{i \in I_3}$, such that $Q_{i,k} \cap Q_{j, k} = \emptyset$ for every $i, j, k \in I_3$ with $i, j > k$ and $i \neq j$.
	Note that the paths $(Q_{i,j})_{i, j \in I_3: i > j}$ are even internally disjoint to the out-rays $(R_i)_{i \in I_3}$ since they are subgraphs of $\hat{D}$.
	
	We found a subfamily $(R_i)_{i \in I_3}$ and families $(P_{i,j})_{i,j \in I_3: i < j}, (Q_{i,j})_{i, j \in I_3: i > j}$ of $(\bigcup_{i \in I_3} R_i)$-paths such that
	\begin{itemize}
		\item $P_{i,j}$ starts in $R_i$, ends in $R_j$ and is internally disjoint to $\bigcup_{i \in I_3} R_i$ for every $i,j \in I_3$ with $i < j$,
		\item $P_{i,j} \cap P_{i, k} = \emptyset$ for every $i, j, k \in I_3$ with $i < j, k$ and $j \neq k$,
		\item $Q_{i,j}$ starts in $R_i$, ends in $R_j$ and is internally disjoint to $\bigcup_{i \in I_3} R_i$ for every $i, j \in I_3$ with $i > j$,
		\item $Q_{i,k} \cap Q_{j, k} = \emptyset$ for every $i, j, k \in I_3$ with $i, j > k$ and $i \neq j$, and
		\item the end vertex of $P_{i,k}$ precedes the start vertex of $Q_{k, j}$ in $R_k$ for every $i,j, k \in I_3$ with $i,j < k$.
	\end{itemize}
	Let $I_4 \subseteq I_3$ be an infinite subset such that $I_3 \setminus I_4$ is infinite.
	\begin{claim}\label{clm:out_ray_strongly}
		There exists an out-ray $\hat{S}$ in $D$ intersecting every element of $(R_i)_{i \in I_4}$ infinitely often such that $D^\infty(I_4)$ with respect to $\hat{S}$ is strongly connected.
	\end{claim}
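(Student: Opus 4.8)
The plan is to construct $\hat S$ as an increasing union of finite directed paths, splicing together arcs of the families $(P_{i,j})_{i<j\in I_3}$ and $(Q_{i,j})_{i>j\in I_3}$ and using the elements of the infinite set $I_3\setminus I_4$ as \emph{transit} indices through which to route. Fix an enumeration $I_4=\{a_1<a_2<\dots\}$. For a large index $c\in I_3\setminus I_4$ (large meaning $c>a_1,a_n$), the concatenation of $P_{a_1,c}$, a forward subpath of $R_c$, and $Q_{c,a_n}$ is a directed $R_{a_1}$--$R_{a_n}$~path whose interior avoids $\bigcup_{i\in I_4}R_i$: it avoids $\bigcup_{i\in I_4}R_i$ in the interior because $P_{a_1,c}$ and $Q_{c,a_n}$ are internally disjoint from all rays and $c\notin I_4$, and the subpath of $R_c$ exists with the correct orientation because the end vertex of $P_{a_1,c}$ precedes the start vertex of $Q_{c,a_n}$ on $R_c$ (the last of the five properties of $(P_{i,j}),(Q_{i,j})$ collected just before the claim, applied with $k=c>a_1,a_n$). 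Symmetrically $P_{a_n,c'}$, a forward subpath of $R_{c'}$, and $Q_{c',a_1}$ form a directed $R_{a_n}$--$R_{a_1}$~path with interior disjoint from $\bigcup_{i\in I_4}R_i$. Since $c\notin I_4$, each such path, once embedded in $\hat S$, corresponds to a single edge of $D(I_4)$.

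I would fix a surjection $m\mapsto n_m$ from $\N$ onto $\{2,3,\dots\}$ with every value attained infinitely often, and build recursively finite directed paths $\hat S_0\subseteq\hat S_1\subseteq\cdots$, each with last vertex $t_m\in R_{a_1}$, such that $\hat S_m$ meets $R_{a_1}$ only inside the initial segment of $R_{a_1}$ ending in $t_m$ and meets each $R_{a_n}$ only inside some initial segment. Passing from $\hat S_{m-1}$ to $\hat S_m$, write $a:=a_{n_m}$ and attach a round trip: walk forward along $R_{a_1}$ from $t_{m-1}$ to the start of $P_{a_1,c}$; follow $P_{a_1,c}$, a forward subpath of $R_c$, and $Q_{c,a}$, arriving on $R_a$; walk forward along $R_a$ to the start of $P_{a,c'}$; follow $P_{a,c'}$, a forward subpath of $R_{c'}$, and $Q_{c',a_1}$, arriving on $R_{a_1}$ at a vertex $t_m$. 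The transit indices $c,c'\in I_3\setminus I_4$ are chosen in turn; for $c$ one demands that $c>a_1,a$, that $c$ was not used before, that the start of $P_{a_1,c}$ lies beyond $t_{m-1}$ on $R_{a_1}$ and the end of $Q_{c,a}$ beyond the part of $R_a$ used so far, and that $P_{a_1,c}$ and $Q_{c,a}$ avoid the (finite) graph built so far; for $c'$ one demands analogously that $c'>a_1,a$, that $c'$ is new, that the start of $P_{a,c'}$ lies beyond the current end vertex on $R_a$, that the end vertex $t_m$ of $Q_{c',a_1}$ lies beyond the start of $P_{a_1,c}$ on $R_{a_1}$, and that $P_{a,c'}$ and $Q_{c',a_1}$ avoid the graph built so far. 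Because $I_3\setminus I_4$ is infinite and, for a fixed first index $i$, the paths $(P_{i,j})_j$ are pairwise disjoint, while for a fixed second index $j$, the paths $(Q_{i,j})_i$ are pairwise disjoint, each of these finitely many requirements excludes only finitely many candidates, so suitable $c,c'$ exist. A routine check (using the fixed-index disjointness of the two families, the transit rays $R_c,R_{c'}$ being pairwise distinct and previously untouched, and the orientations of all chosen arcs) shows the round trip is a directed path meeting $\hat S_{m-1}$ only in $t_{m-1}$; hence $\hat S_m$ is again a finite directed path ending on $R_{a_1}$, and, since $t_m$ is beyond everything used on $R_{a_1}$ and the visit to $R_a$ only extends its initial segment, the invariants persist.

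Then $\hat S:=\bigcup_m\hat S_m$ is an out-ray of $D$. It meets $R_{a_1}$ infinitely often since $t_m$ is $\leq_{R_{a_1}}$-strictly increasing, and it meets every $R_{a_n}$, $n\geq2$, infinitely often since $n$ occurs infinitely often among the $n_m$ and each corresponding round trip adds a nontrivial, further-out subpath of $R_{a_n}$; thus $D(I_4)$ and $D^\infty(I_4)$ are defined with respect to $\hat S$. For fixed $n\geq2$ the round trips through $R_{a_n}$ yield infinitely many $R_{a_1}$--$R_{a_n}$~subpaths of $\hat S$ whose interiors avoid $\bigcup_{i\in I_4}R_i$ and which are pairwise disjoint (their $P$-parts share the first index $a_1$, their $Q$-parts the second index $a_n$, their transit parts lie on distinct rays, and the remaining potential intersections are excluded by the choices of the $c$'s), so $(a_1,a_n)\in E(D^\infty(I_4))$; symmetrically $(a_n,a_1)\in E(D^\infty(I_4))$. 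As $V(D^\infty(I_4))=I_4$ and $a_1$ thus reaches and is reached by every other vertex, $D^\infty(I_4)$ is strongly connected. The step I expect to require the most care is the bookkeeping of this recursion: verifying that each round trip is a genuinely new, internally disjoint detour and that the finitely many ``fresh / large enough / beyond the part already used'' constraints on the two transit indices can be met simultaneously --- precisely the place where the hypothesis that $I_3\setminus I_4$ is infinite and the pairwise disjointness properties of $(P_{i,j})$ and $(Q_{i,j})$ enter.
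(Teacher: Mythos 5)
Your proposal is correct and takes essentially the same route as the paper: both constructions splice together $P_{i,c}$, a forward segment of the transit ray $R_c$, and $Q_{c,j}$ for fresh indices $c\in I_3\setminus I_4$ exceeding both endpoints, relying on the listed disjointness properties and on the fact that the end of $P_{i,c}$ precedes the start of $Q_{c,j}$ on $R_c$. The only cosmetic difference is that you realise just the edges between $a_1$ and every other vertex of $I_4$ (a hub), whereas the paper's sequence $(k_n)_{n\in\N}$ realises every ordered pair of $I_4$; both suffice for strong connectivity of $D^\infty(I_4)$.
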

	\begin{claimproof}
		Let $(k_n)_{n \in \N}$ be a sequence of elements of $I_4$ such that each
		pair of distinct elements of $I_4$ appears infinitely often in $((k_n, k_{n+1}))_{n \in \N}$.
		We construct a strictly increasing sequence $(O_n)_{n \in \N}$ of paths in $D$ with the same start vertex such that for every $n \in \N$
		\begin{itemize}
			\item $O_{n+1} - O_n$ is the concatenation of a path in $R_{k_n}$ and some $R_{k_n}$--$R_{k_{n+1}}$~path that is internally disjoint to $(R_i)_{i \in I_4}$, and
			\item the final vertex $x_{n}$ of $O_n$ is in $R_{k_n}$ and satisfies $O_n \cap x_n R_{k_n} = \{x_n\}$.
		\end{itemize}
		Then the out-ray $\hat{S}:= \bigcup_{n \in \N} O_n$ is as desired.
		
		We let $x_1$ be some vertex of $R_{k_1}$ and set $O_1:= \{x_1\}$.
		Now we suppose that $O_n$ has been constructed for some $n \in \N$.
		Note that all but finitely many out-rays of $(R_i)_{i \in I_3 \setminus I_4}$ avoid the finite set $\Down{V(O_n)}_{I_4}$.
		Further, all but finitely many elements of $(P_{k_n,j} )_{j \in I_3 \setminus (I_4 \cup [k_n])}$ avoid $\Down{V(O_n)}_{I_4}$.
		Similarly, all but finitely many elements $(Q_{j, k_{n+1}} )_{j \in I_3 \setminus ( I_4 \cup [k_{n+1}])}$ avoid $\Down{V(O_n)}_{I_4}$.
		Thus there exists $m \in I_3 \setminus (I_4 \cup [k_n] \cup [k_{n+1}] )$ such that $R_m, P_{k_n,m}$ and $Q_{m, k_{n+1}} $ avoid $\Down{V(O_n)}_{I_4}$.
		Then $P_{k_n, m} \cup R_m \cup Q_{m, k_{n+1}}$ contains an $R_{k_n}$--$R_{k_{n+1}}$~path $\hat{O}_{n+1}$ that is internally disjoint to $\bigcup_{i \in I_4} R_i$.
		Let $O_{n+1}$ be the concatenation of $O_n$, some subpaths of $R_{k_n}$ and $\hat{O}_{n+1}$.
	\end{claimproof}
	
	Let $D^\infty(I_4)$ be the (strongly connected) graph with respect to the out-ray~$\hat{S}$ from~\Cref{clm:out_ray_strongly}.
	Then~\cref{lem:thick_component} applied to $D^\infty(I_4)$ shows that $D$ contains a subdivision of either a bidirected quarter-grid or a cyclic quarter-grid whose vertical out-rays are elements of $(R_i)_{i \in I}$.
\end{proof}

Now we apply~\cref{thm:main} to show:

\thmZutherEnd*

\begin{proof}
	We prove the statement for a family $(R_i)_{i \in I}$ of out-rays.
	By considering $\Dv$, we derive from this statement its counterpart for in-rays.
	By \cref{thm:main} we can assume without loss of generality that there exists a subdivision of a cyclic quarter-grid $D'$ in $D$ whose vertical out-rays are elements of $(R_i)_{i \in I}$.
	Further, we assume without loss of generality that $\N \subseteq I$ and $R_1, R_2, R_3, \dots$ are the vertical out-rays of $D'$.

	We begin by fixing a sequence $((\alpha(n), \beta(n)))_{n \in \N}$ of elements in $\{ (\ell, \ell+1), (\ell+1, \ell): \ell \in \N \}$ such that each element of $\{ (\ell, \ell+1), (\ell+1, \ell): \ell \in \N \}$ appears infinitely often and $\alpha(n), \beta(n) \leq n $ for every $n > 1$.

	We construct recursively for every $n \in \N$ a family $(S_i^n)_{i \in [n]}$ of disjoint paths in $D'$ and some $\bigcup_{i \in [n]} S_i^n$--path $P^n$ such that for every $n > 1$
	\begin{itemize}
		\item $S_i^{n-1}$ is a proper initial segment of $S_i^{n}$ for every $i \in [n - 1]$,
		\item $S_i^n$ ends in $x_i^n \in V(R_i)$ such that $x_i^n R_i \cap (\bigcup_{\ell \in [n]} P^\ell \cup \bigcup_{j \in [n]} S_j^n) = \{x_i^n\}$ for every $i \in [n]$,
		\item $P^n$ starts in $S_{\alpha(n)}^n$ and ends in $S_{\beta(n)}^n$, and
		\item $P^n$ is disjoint to $\bigcup_{\ell \in [n-1]} P^\ell$.
	\end{itemize}
	After finishing the recursion, we set $S_i:= \bigcup_{n \in \N} S_i^n$ for every $i \in \N$ and note that $(S_i)_{i \in \N}$ is a family of disjoint out-rays that are equivalent to $(R_i)_{i \in I}$.
	Then there exists $J \subseteq \N$ such that $\bigcup_{i \in \N} S_i \cup \bigcup_{\ell \in J} P^\ell$ includes the desired subdivision of a bidirected quarter-grid by the choice of $((\alpha(n), \beta(n)))_{n \in \N}$.

	Let $x_1^1$ be some vertex of $R_1$ and set $S_1^1:= \{x_1^1\}$, $P^1:= \{x_1^1\}$.
	We assume that $(S_i^{n-1})_{i \in [n-1]}$ and $(P^\ell)_{\ell \in [n-1]}$ have been defined for some $n > 1$.
	Note that all but finitely many girders of $D'$ avoid $A:= \Down{\bigcup_{i \in [n-1]} S_i^{n-1} \cup \bigcup_{\ell \in [n-1]} P^\ell}_\N$.
	If there exists a girder in $D'$ that contains an $\bigcup_{i \in \N} R_i$--path $P^n$ starting in $R_{\alpha(n)}$, ending in $R_{\beta(n)}$ and avoiding $A$, then let $S_i^n$ be some proper extension of $S_i^{n-1}$ along $R_i$ for every $i \in [n-1]$ and let $S_n^n={x_n^n}$ for some vertex $x_n^n \in V(R_n) \setminus A$ such that $\bigcup_{i \in [n]} S_i^{n}$ contains the endpoints of $P^n$.
	
	If there does not exist a girder in $D'$ that contains an $\bigcup_{i \in \N} R_i$--path $P^n$ starting in $R_{\alpha(n)}$, ending in $R_{\beta(n)}$ and avoiding $A$, then either $\beta(n)=\alpha(n) + 1$ and $D'$ is descending or $\beta(n) = \alpha(n) - 1$ and $D'$ is ascending.
	
	\begin{description}
		\item[If $\beta(n)=\alpha(n) + 1$ and $D'$ is descending] 
		See~\cref{fig:construction_bidirected_quarter_grid_in}.
		Firstly, we construct recursively disjoint paths $Q_1, \dots, Q_n$ with the following properties:
		For every $j \in [n]$, the path $Q_j$ is the union of a path in $R_j$ starting in the end vertex of $S_j^{n-1}$ if $j \in [n-1]$, some $j, \dots, 1$--staircase, some path in $R_1$ and an arch ending in some $R_{f(j)}$, where $(f(i))_{i \in [n]}$ is a strictly increasing sequence of natural numbers greater than $n$.
		Furthermore, the path $Q_j$ avoids $A \cup \bigcup_{k \in [j-1]} \Down{Q_k}_\N$ for every $j \in [n]$.
		This construction is possible since all but finitely many girders avoid $A \cup \bigcup_{k \in [j-1]} \Down{Q_k}_\N$ and $Q_1, \dots, Q_{j-1}$ avoid $R_j$.
		
		Secondly, we construct recursively disjoint paths $Q_1', \dots, Q_n'$ such that $Q_j'$ is the concatenation of some path in $R_{f(j)}$ and a $f(j), \dots, j$--staircase avoiding $A \cup \bigcup_{i \in [n-1]} \Down{Q_i}_\N \cup \bigcup_{k \in [j-1]} \Down{Q_k'}_\N$ for every $j \in [n]$.
		This is possible since all but finitely many girders avoid $A \cup \bigcup_{i \in [n-1]} \Down{Q_i}_\N \cup \bigcup_{k \in [j-1]} \Down{Q_k'}_\N$ and $Q_1, \dots, Q_{j-1}'$ avoid $R_{f(j)}$.
		
		Thirdly, we let $S_i^n$ be the concatenation of $S_i^{n-1}, Q_i$ and $Q_i'$ for $i \in [n]$.
		Note that $R_1$ contains an $S_{\alpha(n)}^n$--$S_{\beta(n)}^n$~path $P^n$.
	
		\item[If $\beta(n) = \alpha(n) - 1$ and $D'$ is ascending]
			See~\cref{fig:construction_bidirected_quarter_grid_out}.
			Firstly, we construct recursively disjoint paths $Q_1, \dots, Q_n$ such that $Q_j$ is the concatenation of some path in $R_{j}$ and a $j, \dots, j+n$--staircase avoiding $A \cup \bigcup_{k \in [n] \setminus [j]} \Down{Q_k}_\N$ for every $j \in [n]$.
			This is possible since all but finitely many girders avoid $A \cup \bigcup_{k \in [n] \setminus [j]} \Down{Q_k}_\N$ and $Q_{j+1}, \dots, Q_n$ avoid $R_{j}$.
			
			Secondly, we construct recursively a strictly increasing sequence $(f(i))_{i \in [n]}$ of natural numbers and disjoint paths $Q_1', \dots, Q_n'$ with the following properties:
			The path $Q_j'$ is the union of a path in $R_{j+n}$ starting in the end vertex of $Q_j$, some $j+n, \dots, f(j)$--staircase, some path in $R_{f(j)}$, the unique arch starting in $R_{f(j)}$, some path in $R_1$ and a $1, \dots, j$--staircase for every $j \in [n]$.
			Furthermore, the path $Q_j'$ avoids $A \cup \bigcup_{k \in [j-1]} \Down{Q_k}_\N \cup \bigcup_{k \in [n] \setminus [j]} \Down{Q_k'}_\N$ for every $j \in [n]$.
			This construction is possible since all but finitely many girders avoid $A \cup \bigcup_{k \in [j-1]} \Down{Q_k}_\N \cup \bigcup_{k \in [n] \setminus [j]} \Down{Q_k'}_\N$ and $Q_{j+1}, \dots, Q_n$ avoid $R_{j+n} \cup R_{f(j)}$.
			
	Thirdly, we set $S_i^n$ to be the concatenation of $S_i^{n-1}, Q_i$ and $Q_i'$ for $i \in [n]$.
	Note that $R_1$ contains an $S_{\alpha(n)}^n$--$S_{\beta(n)}^n$~path $P^n$.
	\end{description}
	This completes the proof.
\end{proof}

	\begin{figure}[ht]
		\centering
		\begin{subfigure}[b]{0.49\textwidth}
			\centering
		\begin{tikzpicture}
			\draw[gray!30, line width=4, line cap=round] (0,0) -- (0,2);
			\draw[gray!30, line width=4, line cap=round] (1,0) -- (1,2);
			\draw[gray!30, line width=4, line cap=round] (0,3.75) -- (0,8);
			\draw[gray!30, line width=4, line cap=round] (1,5.25) -- (1,8);						
			
			
			\draw[CornflowerBlue, line width=4, line cap=round] (0,1.5) -- (0,2.25);


			\foreach \a/\b/\c/\d in {2/7/1/7,1/7/1/7.5,1/7.5/0/7.5}{
				\draw[LavenderMagenta, line width=4, line cap=round] (\a,0.5*\b) -- (\c,0.5*\d);
			}
			
			\foreach \a/\b/\c/\d in {1/2/1/4.5,1/4.5/0/4.5}{
				\draw[PastelOrange!50, line width=4, line cap=round] (\a,0.5*\b) -- (\c,0.5*\d);
			}
			
			\draw[PastelOrange, line width=4, line cap=round] (2,5.25) -- (1,5.25);
			
			
			\foreach \y/\z in {0/1,1/2,2/4,4/4.5,4.5/7,7/7.5,7.5/10.5,10.5/11,11/14.5,14.5/15,15/16}{
				\filldraw (1,{0.5*\y}) circle (1pt);
				\draw[edge] (1,{0.5*\y+0.05}) to (1,{0.5*(\z)-0.05});
			}

				\draw[white, bend left=20, line width=0.2cm, shorten <=5pt, shorten >=5pt] ({0.05}, 1.5) to (1.95,0.5*3);
				\draw[LavenderMagenta!50, line width=4, line cap=round, bend left=20] (0,1.5) to (2,1.5);
				\draw[edge, bend left=20] ({0.05}, 0.5*3) to (2-0.05,0.5*3);
				
			
			\draw[LavenderMagenta, line width=4, line cap=round] (2,1.5) -- (2,3.5);
			
			\foreach \a/\b/\c/\d in {3/10/2/10,2/10/2/10.5}{
				\draw[PastelOrange, line width=4, line cap=round] (\a,0.5*\b) -- (\c,0.5*\d);
			}
			
			
			\foreach \y/\z in {2/3,3/4,4/6.5,6.5/7,7/10,10/10.5,10.5/14,14/14.5,14.5/16}{
				\filldraw (2,{0.5*\y}) circle (1pt);
				\draw[edge] (2,{0.5*\y+0.05}) to (2,{0.5*(\z)-0.05});
			}
			
			
				\draw[white, bend left=20, line width=0.2cm] ({0.05}, 2.75) to (2.95,2.75);
				\draw[PastelOrange!50, line width=4, line cap=round, bend left=20] (0,2.75) to (3,2.75);
				\draw[edge, bend left=20] ({0.05}, 2.75) to (3-0.05,2.75);
				
			
				\draw[PastelOrange, line width=4, line cap=round] (3,2.75) -- (3,5);

				\draw[PastelOrange!50, line width=4, line cap=round] (0,2.25) -- (0,2.75);

				\draw[LavenderMagenta!50, line width=4, line cap=round] (0,1) -- (0,1.5);
				
			
						\foreach \y/\z in {0/1,1/2,2/3,3/4.5,4.5/5.5,5.5/7.5,7.5/8.5,8.5/11,11/12,12/15,15/16}{
				\filldraw (0,{0.5*\y}) circle (1pt);
				\draw[edge] (0,{0.5*\y+0.05}) to (0,{0.5*(\z)-0.05});
			}

			\foreach \y/\z in {4.5/5.5,5.5/6.5,6.5/9.5,9.5/10,10/13.5,13.5/14,14/16}{
				\filldraw (3,{0.5*\y}) circle (1pt);
				\draw[edge] (3,{0.5*\y+0.05}) to (3,{0.5*(\z)-0.05});
			}
			
			\foreach \y/\z in {7.5/8.5,8.5/9.5,9.5/13,13/13.5,13.5/16}{
				\filldraw (4,{0.5*\y}) circle (1pt);
				\draw[edge] (4,{0.5*\y+0.05}) to (4,{0.5*(\z)-0.05});
			}
			
			\foreach \y/\z in {11/12,12/13,13/16}{
				\filldraw (5,{0.5*\y}) circle (1pt);
				\draw[edge] (5,{0.5*\y+0.05}) to (5,{0.5*(\z)-0.05});
			}
				
			\foreach \x/\y in {1/1,4/8.5,5/12}{
				\draw[white, bend left=20, line width=0.2cm, shorten <=5pt, shorten >=5pt] ({0.05}, 0.5*\y) to (\x-0.05,0.5*\y);
				\draw[edge, bend left=20] ({0.05}, 0.5*\y) to (\x-0.05,0.5*\y);
			}
			
			
			\foreach \x/\y in {0/2,0/4.5,1/4,0/7.5,1/7,2/6.5,0/11,1/10.5,2/10,3/9.5,4/13,3/13.5,2/14,1/14.5,0/15}{
				\draw[edge] ({\x+0.95}, 0.5*\y) to (\x+0.05,0.5*\y);
			}

			
			\foreach \x in {0,1,2,3,4,5}{
				\filldraw[gray] (\x,8.35) circle (0.5pt);
				\filldraw[gray] (\x,8.45) circle (0.5pt);
				\filldraw[gray] (\x,8.55) circle (0.5pt);
			}
			
			
			\filldraw[gray] (5.7,6.5) circle (0.5pt);
			\filldraw[gray] (5.9,6.5) circle (0.5pt);
			\filldraw[gray] (6.1,6.5) circle (0.5pt);
			
			\draw[LavenderMagenta!50] (-0.5,1.25) node {$Q_1$};
			\draw[CornflowerBlue] (-0.5,1.75) node {$P^2$};
			\draw[PastelOrange!50] (-0.5,2.5) node {$Q_2$};
			
			\draw[LavenderMagenta] (2.5,1.75) node {$Q_1'$};
			\draw[PastelOrange] (3.5,3) node {$Q_1'$};
		\end{tikzpicture}
		\caption{If $\beta(n)=\alpha(n) + 1$ and $D'$ is descending.}
		\label{fig:construction_bidirected_quarter_grid_in}
		\end{subfigure}
		\hfill
		\begin{subfigure}[b]{0.49\textwidth}
			\centering
					\begin{tikzpicture}
						
				\draw[gray!30, line width=4, line cap=round] (8,0) -- (8,2);
				\draw[gray!30, line width=4, line cap=round] (9,0) -- (9,2);
				\draw[gray!30, line width=4, line cap=round] (8,7) -- (8,8);
				\draw[gray!30, line width=4, line cap=round] (9,6) -- (9,8);		
						
				
				\draw[CornflowerBlue, line width=4, line cap=round] (8,5.25) -- (8,7.25);

					\foreach \a/\b/\c/\d in {9/1.5/9/6,9/6/10/6,10/6/10/6.5,10/6.5/11/6.5}{
						\draw[PastelOrange!50, line width=4, line cap=round] (\a,0.5*\b) -- (\c,0.5*\d);
					}
					\foreach \a/\b/\c/\d in {11/6.5/11/10,11/10/12/10,12/10/12/10.5,8/10.5/8/12,8/12/9/12}{
						\draw[PastelOrange, line width=4, line cap=round] (\a,0.5*\b) -- (\c,0.5*\d);
					}
					\foreach \a/\b/\c/\d in {8/1.5/8/8.5,8/8.5/9/8.5,9/8.5/9/9,9/9/10/9}{
						\draw[LavenderMagenta!50, line width=4, line cap=round] (\a,0.5*\b) -- (\c,0.5*\d);
					}
					\foreach \a/\b/\c/\d in {10/9/10/13,10/13/11/13,11/13/11/13.5,11/13.5/12/13.5,12/13.5/12/14,12/14/13/14,13/14/13/14.5}{
						\draw[LavenderMagenta, line width=4, line cap=round] (\a,0.5*\b) -- (\c,0.5*\d);
					}
					
				
				\foreach \y/\z in {0/1,1/1.5,1.5/3,3/4,4/5.5,5.5/7,7/8.5,8.5/10.5,10.5/12,12/14.5,14.5/16}{
					\filldraw (8,{0.5*\y}) circle (1pt);
					\draw[edge] (8,{0.5*\y+0.05}) to (8,{0.5*(\z)-0.05});
				}
				
				\foreach \y/\z in {0/1,1/1.5,1.5/3,3/3.5,3.5/5.5,5.5/6,6/8.5,8.5/9,9/12,12/12.5,12.5/16}{
					\filldraw (9,{0.5*\y}) circle (1pt);
					\draw[edge] (9,{0.5*\y+0.05}) to (9,{0.5*(\z)-0.05});
				}
				
				\foreach \y/\z in {2.5/3.5,3.5/4,4/6,6/6.5,6.5/9,9/9.5,9.5/12.5,12.5/13,13/16}{
					\filldraw (10,{0.5*\y}) circle (1pt);
					\draw[edge] (10,{0.5*\y+0.05}) to (10,{0.5*(\z)-0.05});
				}
				\foreach \y/\z in {5.5/6.5,6.5/7,7/9.5,9.5/10,10/13,13/13.5,13.5/16}{
					\filldraw (11,{0.5*\y}) circle (1pt);
					\draw[edge] (11,{0.5*\y+0.05}) to (11,{0.5*(\z)-0.05});
				}
				
				\foreach \y/\z in {9/10,10/10.5,10.5/13.5,13.5/14,14/16}{
					\filldraw (12,{0.5*\y}) circle (1pt);
					\draw[edge] (12,{0.5*\y+0.05}) to (12,{0.5*(\z)-0.05});
				}
				
				\foreach \y/\z in {13/14,14/14.5,14.5/16}{
					\filldraw (13,{0.5*\y}) circle (1pt);
					\draw[edge] (13,{0.5*\y+0.05}) to (13,{0.5*(\z)-0.05});
				}

				
				\foreach \x/\y in {8/1,8/3,9/3.5,8/5.5,9/6,10/6.5,8/8.5,9/9,10/9.5,11/10,8/12,9/12.5,10/13,11/13.5,12/14}{
					\draw[edge] (\x+0.05,0.5*\y) to ({\x+0.95}, 0.5*\y);
				}
				
				
				\foreach \x/\y in {9/1.5,10/4,11/7}{
					\draw[white, bend right=20, line width=0.2cm, shorten <=5pt, shorten >=5pt] (\x-0.05,0.5*\y) to ({8.05}, 0.5*\y);
					\draw[edge, bend right=20, shorten <=1pt, shorten >=1pt] (\x,0.5*\y) to ({8}, 0.5*\y);
				}
				
				\draw[white, bend right=20, line width=0.2cm, shorten <=5pt, shorten >=5pt] (12,5.25) to (8,5.25);
				\draw[PastelOrange, line width=4, bend right=20, line cap=round] (12,5.25) to (8,5.25);
				\draw[edge, bend right=20, shorten <=1pt, shorten >=1pt] (12,5.25) to (8,5.25);
				
				\draw[white, bend right=20, line width=0.2cm, shorten <=5pt, shorten >=5pt] (13,7.25) to (8,7.25);
				\draw[LavenderMagenta, line width=4, bend right=20, line cap=round] (13,7.25) to (8,7.25);
				\draw[edge, bend right=20, shorten <=1pt, shorten >=1pt] (13,7.25) to (8,7.25);
				
				\filldraw (8,5.25) circle (1pt);
				\filldraw (8,7.25) circle (1pt);
				\filldraw (12,5.25) circle (1pt);
				\filldraw (13,7.25) circle (1pt);
				
				\draw[edge] (8,5.225) to (8,5.95);
				\draw[edge] (8,7.275) to (8,7.95);
				\draw[edge] (12,5.225) to (12,6.7);
				\draw[edge] (13,7.275) to (13,7.95);
				
				
				\foreach \x in {8,9,10,11,12,13}{
					\filldraw[gray] (\x,8.35) circle (0.5pt);
					\filldraw[gray] (\x,8.45) circle (0.5pt);
					\filldraw[gray] (\x,8.55) circle (0.5pt);
				}
				
				
				\filldraw[gray] (13.7,6.5) circle (0.5pt);
				\filldraw[gray] (13.9,6.5) circle (0.5pt);
				\filldraw[gray] (14.1,6.5) circle (0.5pt);
				
				\draw[LavenderMagenta!50] (7.5,1.25) node {$Q_1$};
				\draw[CornflowerBlue] (7.5,6.5) node {$P^2$};
				\draw[PastelOrange!50] (9.5,1.25) node {$Q_2$};
				
				\draw[LavenderMagenta] (12.5,6.5) node {$Q_1'$};
				\draw[PastelOrange] (7.5,5.5) node {$Q_2'$};
			\end{tikzpicture}
			\caption{If $\beta(n)=\alpha(n) - 1$ and $D'$ is ascending.}
			\label{fig:construction_bidirected_quarter_grid_out}
		\end{subfigure}
		\caption{The construction of a subdivision of a bidirected quarter grid in the proof of~\cref{thm:zuther_end}.}
	\end{figure}
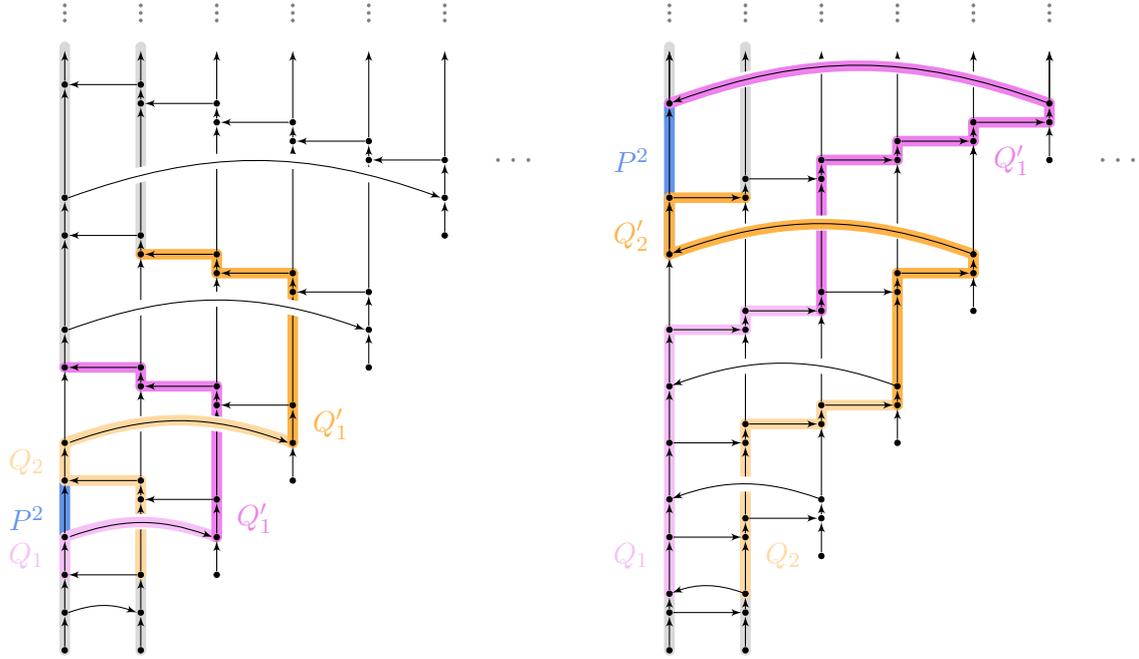

\section{The existence of necklace grids}\label{sec:necklace_grids}
Finally, we turn our attention to necklaces.
We recall the definition of necklace.
A strongly connected subgraph $N$ is a \emph{necklace} if there exists a family $(H_n)_{n \in \N}$ of finite strongly connected subgraphs such that $N = \bigcup_{\ell \in \N} H_\ell$ and $H_\ell \cap H_k \neq \emptyset$ holds if and only if $|\ell - k| \leq 1$ for every $\ell, k \in \N$.
The family $(H_\ell)_{\ell \in \N}$ is a \emph{witness} of $N$ and we define $V(H_\ell) \setminus (V(H_{\ell -1}) \cup V(H_{\ell +1}))$ to be the \emph{interior} of $H_\ell$ for every $\ell >1$.
Furthermore, for every finite set $X \subseteq V(N)$, there exists a unique infinite strong component $C$ in $N-X$ and we call $C$ a \emph{tail} of $N$.
Note that every tail of $N$ contains all but finitely many vertices of $N$.~\cite{bowler2024connectoids}

The \emph{necklace grids} are obtained from subdivisions of the bidirected and the cyclic quarter-grid, respectively, by replacing each vertical out-ray by a necklace $N$ with a fixed witness $(H_\ell^N)_{\ell \in \N}$ such that
\begin{itemize}
	\item for every odd $\ell$, there is at most one edge that is not in $E(N)$ and has head or tail in $H_\ell^N$, and
	\item for every even $\ell$, there is no edge in $E(N)$ and has head or tail in the interior of $H_\ell^N$.
\end{itemize}
The \emph{girders} of necklace grids are defined in the same way as for bidirected quarter-grids and cyclic quarter-grids.
A \emph{strong minor} of a digraph $D$ is a digraph obtained from $D$ by deletion of vertices and edges, and contraction of strongly connected subgraphs.

\thmNecklace*

\begin{proof}[Proof of~\cref{thm:necklace}]
	In this proof we construct an auxiliary digraph $A$ with the property that disjoint paths in $A$ correspond to disjoint paths in $D$ and that contains a family $\R$ of disjoint out-rays that correspond one-to-one to the necklaces of $\cN$.
	Finally, we apply \cref{thm:main} to $A$ and $\R$ to obtain a subdivision of either a bidirected quarter-grid or a cyclic quarter-grid in $A$ which corresponds to a bidirected necklace grid or a cyclic necklace grid in $D$, respectively.
	
	For every $N \in \cN$ let $(H_\ell^N)_{\ell \in \N}$ be a witness of $N$.
	We assume without loss of generality that $\cN$ is countable and further that $\cN= (N_i)_{i \in \N}$.
	Let $((j_n,k_n))_{n \in \N}$ be a sequence of pairs of distinct natural numbers such that each pair of distinct natural numbers appears infinitely often.
	
	We construct recursively a family $(P_n)_{n \in \N}$ of disjoint paths in $D$.
	We assume that $(P_i)_{i \in [n-1]}$ have been constructed.
	For each $N \in \cN$ that has been hit by some path $P_i$ for $i \in [n-1]$ let $\alpha(n,N) \in \N$ be minimum such that $\bigcup_{\ell \geq \alpha(n,N)} H_\ell^N$ avoids $P_1, \dots, P_{n - 1}$.
	For all other $N \in \cN$ set $\alpha(n,N)=0$.
	Let $P_n$ be an $N_{j_n}$--$N_{k_n}$~path in $D$ that avoids the finite set $\bigcup_{N \in \cN} \bigcup_{1 \leq \ell \leq \alpha(n,N)} V(H_\ell^N)$ and the paths $P_1, \dots, P_{n-1}$.
	This completes the construction of $(P_n)_{n \in \N}$.
	
	We consider the subgraph $\hat{D}:= \bigcup \cN \cup \bigcup_{n \in \N} P_n$.
	Note that the necklaces of $\cN$ are equivalent in $\hat{D}$.
	For every $N \in \cN$ set $\alpha(N):=\{\alpha(n,N): n \in \N \} \setminus \{0\}$.
	Note that $\alpha(N)$ is infinite and $|\alpha(n,N) - \alpha(m,N)| \geq 2$ for every $n, m \in \N$ with $n \neq m$.
	Note further that the paths $(P_n)_{n \in \N}$ avoid $\bigcup_{\ell \in \alpha(N)} H_\ell^N$.
	
	We construct a strong minor $A$ of $\hat{D}$ by applying the following steps to every $N \in \cN$ (see~\cref{fig:construction_aux_graph}):
	Firstly, for every $\ell \in \alpha(N)$ we remove all vertices of the interior of $H_\ell^N$ except that we keep a directed $H_{\ell - 1}^N$--$H_{\ell + 1}^N$~path in $H_\ell^N$.
	Secondly, we contract every strong component of $\bigcup_{\ell \in \N \setminus \alpha(N)} H_\ell^N$ to a single vertex. 
	In this way the necklace $N$ turns into an out-ray.
	
	Let $\R$ be the family of disjoint out-rays in $A$ obtained from $\cN$ in $D$.
	Note that the paths $(P_n)_{n \in \N}$ contract to trails $(T_n)_{n \in \N}$ in $A$ such that the endpoints of $T_n$ correspond to the endpoints of $P_n$.
	Thus $\R$ is a family of equivalent out-rays in $A$.
	Note further that every path $Q$ in $A$ expands to a subgraph that contains a path whose endpoints correspond to the endpoints of $Q$.
	Moreover, every two disjoint paths $Q, Q'$ expand to disjoint subgraphs of $D$.
	
	\begin{center}
		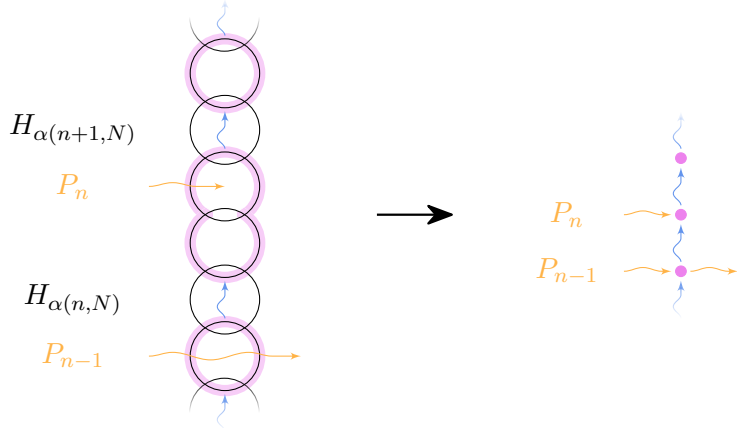
\begin{figure}[ht]
			\begin{tikzpicture}
				
		\draw[LavenderMagenta, line width=0.15cm, opacity=0.4] (0,2.25) circle (13pt);
		\filldraw[white] (0,1.5) circle (15pt);
		\draw[LavenderMagenta, line width=0.15cm, opacity=0.4] (0,1.5) circle (13pt);
		\filldraw[white] (0,2.25) circle (10.5pt);
		\filldraw[white] (-0.15,1.75) rectangle (0.15,2.0);
				
		\draw[LavenderMagenta, line width=0.15cm, opacity=0.4] (0,3.75) circle (13pt);
		\draw[LavenderMagenta, line width=0.15cm, opacity=0.4] (0,0) circle (13pt);
				
		\foreach \x in {0,0.75,...,3.75}{
		\draw[] (0,\x) circle (13pt);
			}
		
		 \draw[path fading=south] (0.45,-0.75) arc [start angle=0, end angle=180, radius=13pt];
		 \draw[path fading=north] (0.45,4.5) arc [start angle=0, end angle=-180, radius=13pt];
		 			
		\draw[->, > = latex', CornflowerBlue, decorate, decoration={snake, amplitude=1mm, segment length=5mm}] (0,0.5) -- (0,1);
		
		\draw[->, > = latex', CornflowerBlue, decorate, decoration={snake, amplitude=1mm, segment length=5mm}] (0,2.75) -- (0,3.25);
		
		\draw[->, > = latex', CornflowerBlue, path fading=south, decorate, decoration={snake, amplitude=1mm, segment length=5mm}] (0,-1) -- (0,-0.5);
		
		\draw[->, > = latex', CornflowerBlue, path fading=north, decorate, decoration={snake, amplitude=1mm, segment length=5mm}] (0,4.25) -- (0,4.75);

		\draw[->, > = latex', PastelOrange, decorate, decoration={snake, amplitude=0.5mm, segment length=10mm}] (-1,0) to(1,0);
		\draw[->, > = latex', PastelOrange, decorate, decoration={snake, amplitude=0.5mm, segment length=10mm}] (-1,2.25) to (0,2.25);
		
		\draw[PastelOrange] (-2,0) node {$P_{n-1}$};
		\draw[PastelOrange] (-2,2.25) node {$P_{n}$};
		
		\draw[] (-2,0.75) node {$H_{\alpha(n,N)}$};
		\draw[] (-2,3) node {$H_{\alpha(n+1,N)}$};
		
		
		\filldraw[LavenderMagenta] (6,1.875) circle (2pt);
		\filldraw[LavenderMagenta] (6,2.625) circle (2pt);
		\filldraw[LavenderMagenta] (6,1.125) circle (2pt);
		
		\draw[->, > = latex', CornflowerBlue, decorate, decoration={snake, amplitude=1mm, segment length=5mm}] (6,2) -- (6,2.5);
		\draw[->, > = latex', CornflowerBlue, decorate, decoration={snake, amplitude=1mm, segment length=5mm}] (6,1.25) -- (6,1.75);
		\draw[->, > = latex', CornflowerBlue, path fading=south, decorate, decoration={snake, amplitude=1mm, segment length=5mm}] (6,0.5) -- (6,1);
		\draw[->, > = latex', CornflowerBlue, path fading=north, decorate, decoration={snake, amplitude=1mm, segment length=5mm}] (6,2.75) -- (6,3.25);
		
		\draw[->, > = latex', PastelOrange, decorate, decoration={snake, amplitude=0.5mm, segment length=8mm}] (5.25,1.125) to (5.875,1.125);
		\draw[->, > = latex', PastelOrange, decorate, decoration={snake, amplitude=0.5mm, segment length=8mm}] (6.125,1.125) to (6.75,1.125);
		\draw[->, > = latex', PastelOrange, decorate, decoration={snake, amplitude=0.5mm, segment length=8mm}] (5.25,1.875) to (5.875,1.875);
		
		\draw[PastelOrange] (4.5,1.125) node {$P_{n-1}$};
		\draw[PastelOrange] (4.5,1.875) node {$P_{n}$};
		
		\draw [line width=0.3mm, -{Stealth[length=4mm, round]}] (2,1.875) -- (3,1.875);
			\end{tikzpicture}
		\caption{The construction of the strong minor $A$ of $D$ in the proof of~\cref{thm:necklace}.
		The magenta outlined areas form components of $\bigcup_{\ell \in \N \setminus \alpha(N)} H_\ell^N$.}
		\label{fig:construction_aux_graph}
		\end{figure}
	\end{center}
	
	We apply \cref{thm:main} to $A$ and $\R$ to obtain a subdivision $A'$ of either a bidirected quarter-grid or a cyclic quarter-grid such that each vertical out-ray of $A'$ is an element of $\R$.
	The family $\R'$ of vertical out-rays of $A'$ corresponds to a family $\cN' \subseteq \cN$ of necklaces.
	Furthermore the (disjoint) horizontal $\R'$-paths in $A'$ correspond to disjoint $\cN'$-paths in $D'$, where the endpoints of the $\cN'$-paths appear in the same pattern on $\cN'$ as on $\R'$.
	
	Finally, we consider for each $N \in \cN$ the witness $(\hat{H}_k^N)_{k \in \N}$, where $\hat{H}_k^N$ is a strong component of $\bigcup_{\ell \in \N \setminus \alpha(N)} H_\ell^N$ if $k$ odd and $\hat{H}_k^N$ is $\hat{H}_\ell^N$ for some $\ell \in \alpha(N)$ if $k$ even.
	Note that the disjoint $\cN'$--paths corresponding to paths in $A'$ do not start or end in the interior of $\hat{H}_k^N$ for $k$ even.
	Thus the union of $\bigcup \cN'$ and the $\cN'$-paths forms either a bidirected necklace grid or a cyclic necklace grid whose vertical necklaces are elements of $\cN$.
\end{proof}

	In the remaining part of this paper we show that both types of necklace grids are necessary in a statement like~\cref{thm:necklace} even if we the relax the constraints of~\cref{thm:necklace} so that the out-rays are equivalent to $(R_i)_{i \in I}$.

\begin{lemma}\label{lem:no_bidirected_necklace_grid}
	There exists a cyclic necklace grid that does not contain a bidirected necklace grid.
\end{lemma}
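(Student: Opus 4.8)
The plan is to exhibit an explicit outward-oriented dominated directed necklace grid $D$ in which every ``backwards'' move must be routed through the first vertical necklace $N_1$, and then to show that this forces too much congestion along $N_1$ for a bidirected necklace grid to fit inside. For the construction, take pairwise disjoint rays $S_i = v^i_0 v^i_1 v^i_2 \cdots$ for $i\in\N$ and put $N_i := \mathcal D(S_i)$, a necklace with witness $(H^i_\ell)_{\ell\in\N}$ where $H^i_\ell$ spans $\{v^i_\ell,v^i_{\ell+1}\}$. For each $n\ge 2$ add a girder $G_n$ that is the concatenation of a simple $1,\dots,n$--staircase, an edge in $N_n$ and an $N_n$--$N_1$~edge (the arch $A_n$), choosing all attachment vertices far out along the $N_i$ and pairwise far apart, so that the girders are pairwise disjoint, $D$ is locally finite, and the special witness conditions for a dominated directed necklace grid hold. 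Then $D$ is an outward-oriented dominated directed necklace grid.

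The key structural observation is that the only edges of $D$ between distinct necklaces are the forward staircase steps $N_c\to N_{c+1}$ and the arches $N_n\to N_1$, and that internal vertices of subdivided girder edges have in- and out-degree $1$; hence a directed path can decrease its necklace index only by visiting $N_1$. Precisely: every directed $N_i$--$N_j$~path with $2\le j<i$ contains a vertex of $N_1$, and the final subpath after its last vertex of $N_1$ visits every $N_c$ with $2\le c\le j-1$. Consequently every strongly connected subgraph of $D$ avoiding $N_1$ is contained in a single $N_i$; since $N_i=\mathcal D(S_i)$, if such a subgraph is a necklace it contains a tail $\{v^i_m,v^i_{m+1},\dots\}$ of $N_i$, and therefore two disjoint necklaces of $D$ avoiding $N_1$ lie in distinct $N_i$.

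Now suppose $D$ contains a bidirected necklace grid $D'$ with vertical necklaces $(M_k)_{k\in\N}$. From the definition of the bidirected quarter-grid --- each girder being a forward staircase $1,\dots,n$, an edge, and a backward staircase $n,\dots,1$, whose cross-necklace pieces are single edges internally disjoint from the rays --- one reads off that for every $k$ there are infinitely many pairwise disjoint $M_{k+1}\to M_k$~paths and infinitely many pairwise disjoint $M_k\to M_{k+1}$~paths, all internally disjoint from $\bigcup_j M_j$. Call $M_k$ \emph{region-bound} if $M_k\cap N_1$ is finite; then $M_k$ contains a tail of some $N_{i(k)}$ with $i(k)\ge 2$, and $k\mapsto i(k)$ is injective on region-bound indices (two disjoint necklaces cannot both contain a tail of the same $N_i$). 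Pick a region-bound index $j_0$ with $i(j_0)$ minimal and a pair of consecutive indices $k,k+1$ both region-bound and both $\ne j_0$; then $i(k)\ne i(k+1)$, and by the structural observation the bundle of infinitely many pairwise disjoint paths between $M_k$ and $M_{k+1}$ that runs from the higher of $i(k),i(k+1)$ to the lower passes, at an internal vertex, through $N_{i(j_0)}$, hence through $N_{i(j_0)}\setminus M_{j_0}$. But that set is finite, so infinitely many pairwise disjoint paths cannot all meet it --- a contradiction.

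The step I expect to be the main obstacle is the remaining case, in which infinitely many of the $M_k$ meet $N_1$ in an infinite set and are spread out enough in the indexing that no consecutive region-bound pair exists. Such an $M_k$ is essentially a sub-necklace of $N_1$, but since at most one necklace can contain a tail of $N_1$, all the others must wind in and out of $N_1$ along infinitely many of the girders; one then has to show that this winding is incompatible with the two-sided bundles that the bidirected grid provides. I would attack this either by a finer analysis of how the attachments of $N_1$ (exits of staircases and landings of arches) can be shared by disjoint winding necklaces, or by engineering the example $D$ --- e.g. spacing the arches and staircase-exits along $N_1$ so that any necklace meeting $N_1$ infinitely is forced to contain a tail of $N_1$ --- so that winding necklaces are excluded outright, after which only the region-bound case remains.
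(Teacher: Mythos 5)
Your example is the right one (it is essentially the paper's: an outward-oriented dominated directed necklace grid with vertical necklaces $\mathcal D(S_i)$ and single-edge arches), your structural observation about descending only via $N_1$ is correct, and your ``region-bound'' argument is sound: it is close in spirit to the first claim of the paper's proof, which shows that if every vertical necklace $M_k$ of the putative bidirected grid contained only finitely many arches, then each would have a tail confined to a single $N_{f(k)}$ with $f$ injective, and three such necklaces contradict the two-sided bundles because every descending $N_{f(n)}$--$N_{f(m)}$~path must pass through the cofinitely-covered $N_{f(\ell)}$. (The paper's dichotomy --- finitely many arches versus infinitely many arches --- is slightly more convenient than yours, since a tail avoiding all arches automatically lies in a strong component of $D-\mathcal A$, whereas ``$M_k\cap N_1$ finite'' leaves open exactly the winding behaviour you worry about.)

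However, the proof is incomplete, and you have correctly identified where: the case of necklaces meeting $N_1$ infinitely often is the crux, not a technicality, and neither of your two proposed remedies is carried out. In particular, re-engineering $D$ to exclude winding necklaces looks unpromising, because a strongly connected subgraph can always descend along $N_1$ by climbing a staircase, moving down inside some $\mathcal D(S_c)$ to an earlier girder's arch-tail, and returning via that arch --- arches are forced to exist in any dominated directed necklace grid, so winding cannot be designed away. The paper's resolution is a genuinely different idea that your sketch does not contain: once some vertical necklace $N_i$ of $D'$ contains infinitely many arches, one picks a far-out tail $N_i'$ of $N_i$, an arch $A$ in it with tail $a$, and (by strong connectivity of $N_i'$) an arch-free $R_1$--$\{a\}$~path $P\subseteq N_i'$. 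Using planarity of $D-\mathcal A$, $P$ separates a prescribed finite region --- chosen to contain a vertex $x$ of any other vertical necklace $N_j$ --- from the rest of $D-\mathcal A$, and a short case analysis on where $P$ starts on $R_1$ shows the arches cannot cross this separator either, so $x$ lies in a finite strong component of $D-P$. Since $N_j$ is infinite and strongly connected, it must meet $P\subseteq N_i$, contradicting disjointness. Without this separation argument (or a substitute for it), the lemma is not proved.
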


\begin{proof}
	Let $D$ be an ascending cyclic necklace grid whose vertical necklaces $R_1, R_2, \dots$ are isomorphic to $\mathcal{D}(S)$ for an undirected ray $S$ and whose arches are single edges.
	Further, let $\mathcal{A}$ be the set of arches of $D$.
	We suppose for a contradiction that $D$ contains a bidirected necklace grid $D'$ and let $N_1, N_2, \dots$ be the vertical necklaces of~$D'$.
	
	\begin{claim}\label{clm:necklace_infinitely_often}
		There exists $i \in \N$ such that $N_i$ contains infinitely many arches of $D$.
	\end{claim}

	\begin{claimproof}
		Suppose for a contradiction that each necklace $N_i$ contains only finitely many arches of $D$.
		Then there exists a tail $N_i'$ of $N_i$ for every $i \in \N$ such that $N_i'$ does not contain an arch.
		As $R_1, R_2, \dots$ are the strong components of $D - \mathcal{A}$, $N_i'$ is contained in some necklace $R_{f(i)}$.
		Since $R_{f(i)}$ is isomorphic to $\mathcal{D}(S)$, $N_i'$ is a tail of $R_{f(i)}$ and in particular, $f(i) \neq f(j)$ for every $i \neq j \in \N$.
		
		Thus there exist $\ell < m < n$ such that $f(\ell) < f(m) < f(n)$.
		Since $D'$ is a bidirected necklace grid, there are infinitely many disjoint $N_n'$--$N_m'$~paths that avoid $N_\ell'$.
		Thus there are infinitely many disjoint $R_{f(n)}$--$R_{f(m)}$~paths in $D$ that avoid $N_\ell'$.
		This contradicts the facts that every $R_{f(n)}$--$R_{f(m)}$~path in $D$ hits $R_{f(\ell)}$ and $R_{f(\ell)} \setminus N_\ell'$ is finite.
	\end{claimproof}

	Let $i \in \N$ be as in~\Cref{clm:necklace_infinitely_often} and let $j \in \N \setminus \{i\}$ be arbitrary.
	Further, let $x$ be some vertex of $N_j$ and let $G$ be a girder of $D$ with the property $x \in \Down{G}_\N$, where the set $\Down{G}_\N \subseteq V(D)$ is defined as expected.
	
	We consider some tail $N_i'$ of $N_i$ that avoids the finite set $\Down{G}_\N$ and let $A$ be some arch in $N_i'$, which exists by~\Cref{clm:necklace_infinitely_often}.
	Let $a$ be the start vertex and $b$ be the end vertex of $A$.
	Since $N_i'$ is strongly connected, there exists a $b$--$a$~path in $N_i'$.
	In particular, there exists an $R_1$--$\{a\}$~path $P$ in $N_i'$ that does not contain an arch.
	
	\begin{claim}\label{clm:finite_component}
		The vertex $x$ is contained in a finite strong component of $D - P$.
	\end{claim}
	\begin{claimproof}
		Note that $D - \mathcal{A}$ is a planar digraph.
		We consider a planar drawing of $D - \mathcal{A}$ similar to~\cref{fig:dominated_directed_quarter_grid_out}.
		The path $P$ starts in this drawing on the left hand side, ends on the right hand side and avoids the set $\Down{G}_\N$ since $P \subseteq N_i'$.
		This implies that $V(P)$ separates $\Down{G}_\N$ from all but finitely many vertices of $D - \mathcal{A}$.
		In particular, the vertex $x \in \Down{G}_\N$ is contained in a finite weak component $X$ of $D - \mathcal{A} - P$.
		
		We show that $X$ is a strong component of $D - P$.
		Then $x$ is contained in the finite strong component $X$ of $D - P$.
		Let $s$ be the start vertex of $P$.
		
		If $s \in \Down{A}_\N$, then $X \cap V(R_1) \subseteq \Down{A}_\N$.
		Thus every arch ending in $X$ has its start vertex in $\Down{A}_\N$.
		Note that all start vertices of arches in $\Down{A}_\N$ are contained in $X \cup V(P)$.
		This implies that there is no arch starting in $V(D) \setminus (X \cup V(P))$ and ending in $ X$.
		If $s \notin \Down{A}_\N$, then $V(R_1) \setminus (X \cup V(P)) \subseteq V(R_1) \setminus \Down{A}_\N$.
		Thus every arch ending in $V(D) \setminus (X \cup V(P))$ avoids $\Down{A}_\N$.
		Note that all start vertices of arches in $X$ are contained in $\Down{A}_\N$.
		This implies that no arch starts in $X$ and ends in $V(D) \setminus (X \cup V(P))$.
		In both cases, $X$ is a strong component of $D - P$.
	\end{claimproof}

	By~\Cref{clm:finite_component}, $N_j$ has to contain a vertex of $P$.
	This contradicts the fact that $N_i, N_j$ are disjoint since $P \subseteq N_i$, and completes the proof.
\end{proof}

\begin{lemma}
	There exists a bidirected necklace grid that does not contain a cyclic necklace grid.
\end{lemma}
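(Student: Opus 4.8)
The plan is to mirror the proof of~\cref{lem:no_bidirected_necklace_grid}, with the roles of the two kinds of necklace grid exchanged. Take $D$ to be the bidirected necklace grid obtained from an \emph{unsubdivided} bidirected quarter-grid by letting each vertical necklace $R_i$ be a copy of $\mathcal{D}(S)$ for an undirected ray $S$. Since the girders of $D$ are simple staircases, every vertex of $D$ lies on one of the $R_i$ and every edge of $D$ that is not an edge of some $R_i$ joins two consecutive necklaces $R_i, R_{i+1}$; consequently $D$ is planar --- draw it as in~\cref{fig:bidirected_quarter_grid} --- and, for every $m$, the vertex set $V(R_m)$ separates $D$ into the part on $\bigcup_{j<m} R_j$ and the part on $\bigcup_{j>m} R_j$, and it still does so after deleting any finite set of vertices. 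Suppose for a contradiction that $D$ contains a dominated directed necklace grid $D'$. Reversing every edge of $D$ yields again a bidirected necklace grid with vertical necklaces $\mathcal{D}(S)$ (necklaces are preserved under edge reversal and the two staircases of each girder simply swap roles), while it turns an outward-oriented dominated directed necklace grid into an inward-oriented one and conversely; so we may assume $D'$ is inward-oriented, with vertical necklaces $N_1, N_2, \dots$ and arches $A_2, A_3, \dots$, where $A_n$ is an $N_1$--$N_n$~path, the $A_n$ are pairwise disjoint, and each $A_n$ is internally disjoint from $\bigcup_j N_j$.

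To every necklace $N$ of $D'$ I attach the set $\kappa(N):=\{m : N\cap V(R_m)\text{ is infinite}\}$. Using the unique-tail property of necklaces together with the fact that each $V(R_m)$ separates $D$ --- so that $N$ cannot be infinite in columns $j$ and $j'$ without being infinite in every column between --- one checks that $\kappa(N)$ is an interval of $\N$, possibly empty, and that if $\kappa(N)=[a,b]$ is finite then all but finitely many vertices of $N$ lie on $R_a\cup\dots\cup R_b$. The first part of the argument, playing the role of~\cref{clm:necklace_infinitely_often}, then shows that we cannot have infinitely many $N_n$ whose column-window is inside a fixed finite interval $[1,M]$: a strip of finitely many columns of $D$ cannot contain infinitely many pairwise disjoint necklaces, because by the column structure the set of heights used by a necklace in such a strip is cofinal, so two disjoint ones would eventually both use each of the (at most $M$) vertices in a row. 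Combining this with a separator argument applied to three necklaces $N'_1,N'_2,N'_3$ among the $N_n$ whose column-windows lie strictly left-to-right --- the column $R_c$ with $c$ in $N'_2$'s window must be met by every $N'_1$--$N'_3$~path of $D$, whereas $D'$, being a dominated directed necklace grid in which the middle necklace is not the dominating necklace $N_1$, still supplies infinitely many pairwise disjoint $N'_1$--$N'_3$~paths avoiding $N'_2$ (routed, in the inward case, through $N_1$ and two of the arches) --- I reduce to the case that all but finitely many $N_n$ ``escape to the right'' and that the dominating necklace $N_1$ either does so as well or sits inside a fixed finite column-window.

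With this structure in hand I construct, exactly as in~\cref{lem:no_bidirected_necklace_grid}, a path $P$ contained in some necklace $N_i$ of $D'$ and a vertex $x$ of another necklace $N_j$ of $D'$ such that $V(P)$ separates $x$ into a finite component of $D-V(P)$. Concretely: pick such an $x$, pick a girder $G$ of $D$ with $x\in\Down{G}_\N$, pass to a tail $N_i'$ of $N_i$ avoiding the finite set $\Down{G}_\N$, and locate inside $N_i'$ a path $P$ which, in the fixed planar drawing of $D$, runs from one side of the picture across to the other, so that $V(P)$ cuts off $\Down{G}_\N$ --- hence cuts off $x$ --- into a finite component. The step in~\cref{lem:no_bidirected_necklace_grid} where the \emph{ambient} grid's arches furnished the far endpoint of $P$ is here supplied by the arches of $D'$: taking $i=n$ for a suitably large $n$, the arch $A_n$ certifies that $N_n$ reaches the required far region of $D$, and the preceding paragraph guarantees that the relevant tail $N_n'$ genuinely crosses the column-separators $V(R_m)$ beyond $G$, so that $P\subseteq N_n'$ can be chosen to separate the plane as needed. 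Finally, since $x\in V(N_j)$ and $N_j$ is disjoint from $N_i\supseteq V(P)$, the whole necklace $N_j$ --- an infinite strongly connected subgraph containing $x$ --- lies in $D-V(P)$, so the strong component of $x$ there is infinite, contradicting the construction of $P$. Hence $D$ contains no dominated directed necklace grid, while it is visibly a bidirected necklace grid.

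As in~\cref{lem:no_bidirected_necklace_grid}, the heart of the matter is the last two steps: classifying how a necklace of a putative $D'\subseteq D$ can sit inside the column structure of $D$ (the column-profile $\kappa(N)$ being the natural surrogate for ``a necklace of $D'$ with no arch is a tail of some $R_i$''), and then manufacturing, from the arches of $D'$ and the planarity of $D$, a path inside one necklace of $D'$ that planarly isolates a vertex of another. The genuine novelty over~\cref{lem:no_bidirected_necklace_grid}, and the step I expect to be the main obstacle, is that the ambient grid $D$ is now bidirected and so has no distinguished dominating ray from which to launch the separating path; the whole construction must instead be controlled through the column-separators $V(R_m)$ and the nesting of the girders of $D$, and one must verify that the arches of $D'$ cannot be threaded past all of these separators simultaneously for large $n$.
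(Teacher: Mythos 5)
Your proposal has two genuine gaps, both at the points you yourself flag as delicate. First, the column‑separator step: from the fact that every $N'_1$--$N'_3$~path of $D$ must meet the column $R_c$ you cannot conclude anything about paths avoiding $N'_2$, because $N'_2$ only occupies infinitely many vertices of $R_c$, not all of them; a path avoiding $N'_2$ is free to cross $R_c$ through $V(R_c)\setminus V(N'_2)$. So the ``windows lie strictly left-to-right'' case does not yet yield a contradiction. Second, and more seriously, the residual case in which the necklaces of $D'$ escape to the right rests on producing a finite path $P$ inside a tail $N_i'$ of one necklace of $D'$ that cuts off a finite region of $D$. In \cref{lem:no_bidirected_necklace_grid} this works only because the ambient grid has arches: the path runs from the left boundary $R_1$ to the start vertex of an arch, and the arch closes the separating curve along the outer face; the whole point of the strong-component analysis there is to control how the non-planar arch edges re-enter the cut-off region. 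A bidirected quarter-grid has no arches, its planar drawing's outer boundary far to the right consists of initial column segments that a tail of a necklace of $D'$ has no reason to reach, and the arches of $D'$ are merely paths of $D$ with no distinguished position in the drawing. You acknowledge this is ``the main obstacle'' but do not resolve it, and I do not see how it can be resolved along these lines.

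The paper's proof sidesteps both issues and needs none of your case analysis. It fixes three vertical necklaces $N_1,N_2,N_3$ of the putative $D'$, notes that in the \emph{underlying undirected graph} of a dominated directed necklace grid any two vertical necklaces are joined by infinitely many disjoint paths avoiding the third (route via the dominating necklace and two arches when the third one lies between them), then chooses undirected rays $R_i\subseteq N_i$ and a girder $G$ of $D$ past which each $R_i$ has a tail $R_i'$ starting on $G$. By planarity of the bidirected quarter-grid, the middle tail — say $R_2'$ — separates $R_1'$ from $R_3'$ in $D-\Down{G}$. Since $R_2'\subseteq N_2$, the infinitely many disjoint $N_1$--$N_3$ paths avoiding $N_2$ (all but finitely many of which also avoid the finite set $\Down{G}$) contradict this separation. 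Note that the separator here is a ray \emph{inside} the middle necklace, not a column of $D$, which is exactly what repairs your first gap; and the separation happens with $R_2'$ as an infinite ray from the boundary girder to infinity, so no finite closing path is ever needed, which is what your second gap was missing. I would recommend abandoning the $\kappa(N)$ machinery and the attempt to mirror \cref{lem:no_bidirected_necklace_grid} step by step, and instead running the three-necklace separation argument directly.
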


\begin{proof}
	Let $D$ be a bidirected necklace grid whose vertical necklaces are isomorphic to $\mathcal{D}(S)$ for an undirected ray $S$.
	Suppose for a contradiction that $D$ contains a cyclic necklace grid $D'$.
	Let $N_1, N_2$ and $N_3$ be distinct vertical necklaces of $D'$.
	We consider the underlying undirected graph $G(D')$ of $D'$ and let $R_i$ be an undirected ray contained in $N_i$ for every $i \in [3]$.
	Note that there are infinitely many disjoint undirected $R_i$--$R_j$~paths that avoid $R_k$ in $G(D')$ for every $i \neq j \in [3]$ with $ k \in [3] \setminus \{i,j\}$ since $D'$ is a cyclic necklace grid.
	
	Now we consider the rays $R_1, R_2, R_3$ in the underlying undirected graph $G(D)$ of $D$.
	Let $G$ be a girder of $D$ such that $\Down{G}$ contains vertices of $R_1, R_2$ and $R_3$.
	Further let $R_i'$ be the unique tail of $R_i$ that intersects $\Down{G}$ precisely in its first vertex for every $i \in [3]$.
	Note that $R_1', R_2'$ and $R_3'$ start in $V(G)$.
	We assume without loss of generality that the path $G$ hits the start vertices of $R_1', R_2'$ and $R_3'$ in this order.
	Then there exists no $R_1'$--$R_3'$~path that avoids $\Down{G}$ and $R_2'$, a contradiction.
	This completes the proof.
\end{proof}

\section*{Acknowledgement}
The author gratefully acknowledges support by a doctoral scholarship of the Studienstiftung des deutschen Volkes.

\bibliography{ref.bib}

\end{document}